\numberwithin{equation}{section}  
\newtheorem{theorem}{Theorem}[section]  
\newtheorem{lemma}[theorem]{Lemma}
\newtheorem{proposition}[theorem]{Proposition}
\newtheorem{remark}[theorem]{Remark}
\newtheorem{definition}[theorem]{Definition}
\newtheorem{assumption}[theorem]{Assumption}
\newenvironment{proof}{\removelastskip\par\medskip   
\noindent{\em Proof.} \rm}{\penalty-20\null\hfill$\square$\par\medbreak}
\newcommand{\R}{\mathbb{R}}
\newcommand{\Q}{\mathbb{Q}}
\newcommand{\N}{\mathbb{N}}
\newcommand{\E}{\mathbb{E}}
\newcommand{\bbP}{\mathbb{P}}
\newcommand{\cE}{{\ensuremath{\mathcal E}} }
\newcommand{\cN}{{\ensuremath{\mathcal N}} }
\newcommand{\BB}{\mathscr{B}}
\newcommand{\ee}{{\mbox{\boldmath$e$}}}
\newcommand{\ii}{{\mbox{\boldmath$i$}}}
\newcommand{\rr}{{\mbox{\boldmath$r$}}}
\renewcommand{\ss}{{\mbox{\boldmath$s$}}}
\renewcommand{\tt}{{\mbox{\boldmath$t$}}}
\newcommand{\vv}{{\mbox{\boldmath$v$}}}
\newcommand{\tauV}{{\kern-3pt\tau}}
\newcommand{\Haus}[1]{{\mathscr H}^{#1}}     
\newcommand{\Leb}[1]{{\mathscr L}^{#1}}      
\newcommand{\la}{{\langle}}                  
\newcommand{\ra}{{\rangle}}
\newcommand{\eps}{\varepsilon}
\newcommand{\Semi}[2]{{{\mathscr S}{#1}({#2})}}
\newcommand{\Semii}[3]{{{\mathscr S}_{(#3)}{#1}({#2})}}
\newcommand{\Semit}[3]{{{\mathscr S}_{#3}{#1}({#2})}}
\newcommand{\Semitn}[3]{{{\mathscr S}^n_{#3}{#1}({#2})}}
\newcommand{\law}[2]{\nu_{#1}^{#2}}
\newcommand{\Expectation}[1]{\E\big[#1\big]}
\newcommand{\restr}[1]{\lower3pt\hbox{$|_{#1}$}}
\newcommand{\BorelSets}[1]{\BB(#1)}
\newcommand{\GBorelSets}[1]{\BB_G(#1)}
\newcommand{\Probabilities}[1]{\mathscr P(#1)}          
\newcommand{\ProbabilitiesTwo}[1]{\mathscr P_2(#1)}     
\newcommand{\Pc}[2]{\overline{#1}\kern-2pt^{\vphantom 0}_{#2}}
\newcommand{\Pcb}[2]{\underline{\phantom{..}}\kern-6pt #1_{#2}}
\newcommand{\RelativeEntropy}[2]{\mathcal H(#1|#2)}
\newcommand{\nchi}{{\raise.3ex\hbox{$\chi$}}}
\newcommand{\see}{{\mbox{\scriptsize\boldmath$e$}}}
\newcommand{\Hn}{H_n}
\title{Existence and Stability for Fokker-Planck equations
with log-concave reference measure}
\author{Luigi Ambrosio\thanks{\textsf{l.ambrosio@sns.it}}\\
  Scuola Normale Superiore,
  Pisa
  \and
  Giuseppe Savar\'e\thanks{\textsf{giuseppe.savare@unipv.it}}\\
  Dipartimento di Matematica,
  Universit\`a di Pavia
  \and
  Lorenzo Zambotti\thanks{\textsf{zambotti@ccr.jussieu.fr}}\\
  LPMA, Universit\'e Paris VI}
\date{}
\begin{document}

\maketitle

\begin{abstract}
We study Markov processes associated with stochastic differential
equations, whose non-linearities are gradients of convex
functionals. We prove a general result of existence of such Markov
processes and a priori estimates on the transition probabilities.
The main result is the following stability property: if the
associated invariant measures converge weakly, then the Markov
processes converge in law. The proofs are based on the
interpretation of a Fokker-Planck equation as the steepest descent
flow of the relative Entropy functional in the space of probability
measures, endowed with the Wasserstein distance. Applications
include stochastic partial differential equations and convergence of
equilibrium fluctuations for a class of random interfaces.

\smallskip
\noindent 2000 \textit{Mathematics Subject Classification:} 60J35;
49J; 60K35

\smallskip
\noindent\textit{Keywords:} Fokker-Planck equations; log-concave
probability measures; gradient flows; Relative Entropy; Dirichlet
Forms.
\end{abstract}


\section{Introduction and main results}

In the seminal paper \cite{jko}, Jordan-Kinderlehrer-Otto have given
a remarkable interpretation of the solution to a linear
Fokker-Planck equation as the steepest descent flow of the relative
Entropy functional in the space of probability measures, endowed
with the Wasserstein distance. The book \cite{ags} by
Ambrosio-Gigli-Savar\'e has provided a general theory of gradient
flows in the Wasserstein space of probability measures, including
linear and non-linear PDE's, in finite and infinite dimension.

In this paper we want to investigate the probabilistic counterpart
of such results. The approach is analytical and based on techniques
from Calculus of Variations and Optimal Transport Problems; however
several results have important consequences on existence and in
particular convergence of Markov processes being reversible with
respect to a \emph{log-concave} probability measure.

Following \cite{ags}, we interpret the solution $(\mu_t)_{t\geq 0}$
of a Fokker-Planck equation with convex potential, as a curve in the
space of probability measures, solving a suitable \emph{differential
variational inequality}. We obtain interesting estimates on $\mu_t$
which have, to our knowledge, no direct probabilistic proof, and are
very useful in the study of the time-homogeneous Markov process
$(X_t)_{t\geq 0}$ whose one-time distributions are $(\mu_t)_{t\geq
0}$.

\subsection{The main results}
We consider a separable Hilbert space $H$, which could be finite or
infinite dimensional, whose scalar product and norm will be
respectively denoted by $\la\cdot,\cdot\ra$ and $\|\cdot\|$. We
denote by $\Probabilities{H}$ the set of all probability measures on
$H$, endowed with the Borel $\sigma$-algebra.

We consider a probability measure $\gamma$ on $H$ with the
following property:
\begin{assumption}\label{logc}
$\gamma$ is {\it log-concave}, i.e.
for all pairs of open sets $B,\, C\subset H$
\begin{equation}\label{deflogconc}
\log\gamma\left((1-t)B+tC\right)\geq (1-t)\log\gamma(B)+t\log\gamma(C)
\qquad\forall t\in (0,1).
\end{equation}
\end{assumption}
The class of log-concave probability measures includes all
measures of the form (here $\Leb{k}$ stands for Lebesgue measure)
\begin{equation}
  \label{eq:basic_example}
  \gamma :=  \frac 1Z \, e^{-V} \Leb{k},
  \qquad\text{where $V:H=\R^k\to\R$ is convex and $Z:= \int_{\R^k} e^{-V} \, dx<+\infty$},
\end{equation}
all Gaussian measures, all Gibbs measures on a finite lattice with
convex Hamiltonian; see Proposition~\ref{charalog} and the Appendix
for more information on the class of log-concave probability
measures.

We denote the support of $\gamma$ by $K=K(\gamma)$ and
the smallest closed affine subspace of $H$ containing $K$
by $A=A(\gamma)$. We write canonically
\begin{equation}\label{defah}
A \, = \, H^0 \, + \, h^0, \qquad
h^0\in K, \quad \|h^0\| \, \leq \, \|k\| \quad \forall
\ k\in K,
\end{equation}
so that $h^0=h^0(\gamma)$ is the element of minimal norm
in $K$ and $H^0=H^0(\gamma)$ is a closed linear subspace of $H$.
As in the Gaussian case, we will say that $\gamma$ in \emph{non-degenerate}
if $H^0(\gamma)=H$.

We want to consider a stochastic processes with values in $A(\gamma)$
and reversible with respect to $\gamma$. We now state a first result
which determines such process in a canonical way.
We denote by $C_b(H)$ the space of bounded continuous functions in
$H$ and by $C_b^1(A(\gamma))$ the space of all
$\Phi:A(\gamma)\mapsto\R$ which are bounded, continuous and
Fr\'echet differentiable with bounded continuous gradient
$\nabla\Phi:A(\gamma)\mapsto H^0(\gamma)$ (notice
that all functions in $C_b^1(A(\gamma))$ are Lipschitz continuous).

We set $\Omega:=C([0,+\infty[;K)\subset K^{[0,+\infty[}$, and we denote by
$X_t:K^{[0,+\infty[}\to K$ the coordinate process $X_t(\omega):=\omega_t$, $t\geq 0$.
We shall endow $\Omega$ with the Polish topology of uniform convergence on bounded
subsets of $[0,+\infty[$, and the relative Borel $\sigma$-algebra. On $K^{[0,+\infty[}$
we shall consider the canonical $\sigma$-algebra generated by cylindrical sets and, for
probability measures in $K^{[0,+\infty[}$, the convergence induced by the duality
with continuous cylindrical functions of the form $f(X_{t_1},\ldots,X_{t_n})$,
with $f\in C_b(K^n)$.
\begin{theorem}[Markov process and Dirichlet form associated to $\gamma$]\label{main1}
Let $\gamma$ be a log-concave probability measure on $H$ and let $K$
be its support. Then:
\begin{itemize}
\item[(a)]
The bilinear form ${\cal E}={\cal E}_{\gamma,\|\cdot\|}$ given by
\begin{equation}\label{diri}
{\cal E}(u,v) \, := \, \int_{K} \la\nabla u,\nabla
v\ra_{H^0(\gamma)} \, d\gamma, \qquad u,\,v\in C^1_b(A(\gamma)),
\end{equation}
is closable in $L^2(\gamma)$ and its closure $(\cE,D(\cE))$ is a
symmetric Dirichlet Form. Furthermore, the associated semigroup
$(P_t)_{t\geq 0}$ in $L^2(\gamma)$ maps $L^\infty(\gamma)$ in
$C_b(K)$.
\item[(b)] There exists a unique Markov family $(\bbP_x:x\in K)$ of probability measures
on $K^{[0,+\infty[}$ associated with $\cE$. More precisely, $\E_x[f(X_t)]=P_tf(x)$
for all bounded Borel functions and all $x\in K$. Moreover, $x\mapsto\bbP_x$ is continuous.
\item[(c)] For all $x\in K$, $\bbP_x^*\left(C(]0,+\infty[;H)\right)=1$ and
$\E_x[\|X_t-x\|^2]\to 0$ as $t\downarrow 0$. Moreover, $\bbP_x^*\left(C([0,+\infty[;H)\right)=1$
for $\gamma$-a.e. $x\in K$.
\item[(d)] $(\bbP_x:x\in K)$ is reversible with respect to $\gamma$,
i.e. the transition semigroup $(P_t)_{t\geq 0}$ is symmetric in
$L^2(\gamma)$; moreover $\gamma$ is invariant for $(P_t)$, i.e.
$\gamma(P_tf)=\gamma(f)$ for all $f\in C_b(K)$ and $t\geq 0$.
\end{itemize}
\end{theorem}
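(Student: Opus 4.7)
My plan is to build the transition semigroup $(P_t)$ \emph{first} via the Wasserstein gradient flow of the relative entropy $\RelativeEntropy{\cdot}{\gamma}$, and \emph{then} recognise it as the $L^2(\gamma)$-semigroup of the symmetric Dirichlet form whose core is (\ref{diri}). The structural input is that Assumption~\ref{logc} is equivalent to $0$-geodesic convexity of $\RelativeEntropy{\cdot}{\gamma}$ along $W_2$-geodesics (cf.\ the Appendix), so the theory of \cite{ags} produces, for every $\mu_0\in \ProbabilitiesTwo{K}$, a unique EVI-gradient flow $t\mapsto S_t\mu_0$ satisfying the contraction $W_2(S_t\mu_0,S_t\nu_0)\le W_2(\mu_0,\nu_0)$. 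Setting $p_t(x,\cdot):=S_t\delta_x$ gives a Markov kernel for which $x\mapsto p_t(x,\cdot)$ is $1$-Lipschitz from $K$ into $\ProbabilitiesTwo{K}$; invariance of $\gamma$ under $S_t$ is automatic because $\gamma$ is the unique minimiser of $\RelativeEntropy{\cdot}{\gamma}$.

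For (a), I set $P_tf(x):=\int f\,dp_t(x,\cdot)$ and verify, via the EVI formulation and the chain rule for $\RelativeEntropy{\cdot}{\gamma}$, the identity
\begin{equation*}
\lim_{t\downarrow 0}\frac{1}{t}\int_K u\,(u-P_tu)\,d\gamma=\int_K\|\nabla u\|_{H^0(\gamma)}^2\,d\gamma,\qquad u\in C_b^1(A(\gamma)),
\end{equation*}
which identifies (\ref{diri}) with the quadratic form of the generator of the symmetric contraction semigroup $(P_t)$ on $L^2(\gamma)$; this gives closability and the Dirichlet-form property, while Markovianity is inherited from that of $S_t$ on probability measures. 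The regularisation $L^\infty(\gamma)\to C_b(K)$ exploits the fact that, for $t>0$, entropy dissipation forces $p_t(x,\cdot)\ll\gamma$ with density bounded in terms of $t$, so that $P_tf$ with $f\in L^\infty(\gamma)$ inherits continuity from the $W_2$-continuity of $x\mapsto p_t(x,\cdot)$. The delicate point here is the $L^2(\gamma)$-symmetry of $(P_t)$, which I would obtain by approximating $\gamma$ by finite-dimensional smooth log-concave marginals $\gamma_n$, on which a direct integration by parts applies, and then passing to the limit using stability of the Wasserstein gradient flow under the convergence $\gamma_n\WeakTo\gamma$.

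Granted (a), part (b) follows by Kolmogorov extension from the family $p_t(x,\cdot)$ combined with the Feller-type property just established; uniqueness and the continuity $x\mapsto\bbP_x$ are inherited from the $W_2$-continuity of $x\mapsto p_t(x,\cdot)$. For (c) I use the fundamental energy estimate
\begin{equation*}
\int_0^T \MetricDerivativeSquare{\mu_s}\,ds\le 2\bigl(\RelativeEntropy{\mu_0}{\gamma}-\RelativeEntropy{\mu_T}{\gamma}\bigr)
\end{equation*}
along the flow, combined with Kolmogorov's continuity criterion for the squared $H$-increments: this yields $\bbP_x$-a.s.\ continuity on $]0,+\infty[$ for every $x\in K$, while continuity up to $t=0$ holds for $\gamma$-a.e.\ $x$ by the quasi-continuity of sample paths of the Hunt process associated with $(\cE,D(\cE))$. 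The one-point convergence $\E_x[\|X_t-x\|^2]\to0$ is exactly $W_2(p_t(x,\cdot),\delta_x)^2\to 0$, built into $S_t$. Reversibility (d) is immediate from the $L^2(\gamma)$-symmetry of $(P_t)$ established in step two.

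The main obstacle is precisely the symmetry-and-closability step of (a): in the present generality $\gamma$ may be infinite-dimensional and supported on a proper closed affine subspace $A(\gamma)$, so classical integration-by-parts arguments do not apply directly, and the finite-dimensional approximation together with the Wasserstein-stability transfer must be executed carefully to control both the null directions $H\ominus H^0(\gamma)$ and the interaction of the approximation with the cylindrical test functions in $C_b^1(A(\gamma))$.
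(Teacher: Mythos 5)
Your high-level strategy -- construct the Wasserstein gradient flow first and then identify it with the $L^2(\gamma)$ semigroup of the Dirichlet form -- is exactly the paper's, and several of the individual ingredients (contractivity of the flow, regularizing effect giving $\nu_t^x\ll\gamma$, finite-dimensional approximation plus $\Gamma$-convergence/Wasserstein stability to transfer identities from smooth $\gamma_n=e^{-V_n}\Leb{k}$ to a general log-concave $\gamma$) are the ones the paper actually uses. However, two of your proposed steps have genuine gaps.

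First, in part (c) you invoke Kolmogorov's continuity criterion ``for the squared $H$-increments.'' That criterion requires a two-time moment bound of the form $\E_x[\|X_t-X_s\|^p]\le C|t-s|^{1+\alpha}$, but the gradient-flow machinery only controls the one-time marginals $\nu_t^x$ and gives a $W_2$-H\"older bound $W_2(\nu_t^x,\nu_s^x)\le C_\eps\sqrt{|t-s|}$; since $W_2$ is an infimum over couplings, it says nothing about $\E_x[\|X_t-X_s\|^2]$, and the conditional expression $\E_x[\|X_t-X_s\|^2]=\int W_2^2(\delta_z,\nu_{|t-s|}^z)\,d\nu_s^x(z)$ cannot be bounded by $|t-s|$ uniformly because $\RelativeEntropy{\delta_z}{\gamma}=+\infty$. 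The paper's route is essentially forced: first one proves that $\cE_\gamma$ is a \emph{tight, local} Dirichlet form (using the Lipschitz bound $\sqrt{\cE_\gamma(u,u)}\le[u]_{{\rm Lip}(K)}$ and a Banach--Saks argument), which by Ma--R\"ockner yields a Hunt process $\tilde\bbP_x$ with continuous $K$-valued paths for $\gamma$-a.e.\ $x$; then, using $\nu_\eps^x\ll\gamma$ and the Markov property (the Dohmann transfer), one bootstraps path continuity on $]0,+\infty[$ to \emph{every} $x\in K$. You do mention the Hunt process for the behaviour at $t=0$, but for the interior interval a Kolmogorov-type argument is not available here.

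Second, in part (a) you propose to read off the Dirichlet form from the short-time limit $\lim_{t\downarrow 0}t^{-1}\int_K u(u-P_tu)\,d\gamma=\int_K\|\nabla u\|^2_{H^0(\gamma)}\,d\gamma$. This has a chicken-and-egg issue you only half-acknowledge: to speak of the form of $(P_t)$ you must already know $(P_t)$ is a symmetric $C_0$-contraction semigroup on $L^2(\gamma)$, which is precisely the hard part; and even granted that, the identity only shows $C_b^1(A(\gamma))\subset D(\cE')$ with $\cE'|_{C_b^1}=\cE_\gamma$, hence closability, but not that $C_b^1$ is a \emph{core} of $\cE'$ -- without a core statement you cannot claim the closure of \eqref{diri} generates $(P_t)$, which you need for (b). The paper sidesteps both issues by proving closability \emph{unconditionally} via a lower-semicontinuity argument based on the variational characterization $\sqrt{\cE_\gamma(u,u)}=\sup\{S:\,\RelativeEntropy{\mu}{\gamma}\ge\RelativeEntropy{u^2\gamma}{\gamma}-2S\,W_2(\mu,u^2\gamma)\}$ (Lemma \ref{slopeentro}), and then matching the \emph{resolvents} $R_\lambda^n f\to F_\lambda f$ by Laplace transform, which avoids $t\downarrow0$ differentiation and is stable under the $\Gamma$-convergence of entropies. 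A smaller point: you assert ``entropy dissipation forces $p_t(x,\cdot)\ll\gamma$ with density bounded in terms of $t$''; the regularizing estimate only gives a uniform entropy bound, not an $L^\infty$ bound on the density, and the paper's $L^\infty(\gamma)\to C_b(K)$ argument uses the uniform entropy plus Dunford--Pettis (weak $L^1$ convergence of the densities) to get continuity of $x\mapsto P_tf(x)$.
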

An example in $H:=\R^k$ of the above setting is provided by
\eqref{eq:basic_example} when the potential $V:\R^k\to\R$ is convex
with Lipschitz continuous gradient $\nabla V:\R^k\to\R^k$. Then
$\gamma$ is log-concave, see Proposition \ref{charalog}, and the
process $X$ is a solution of the Stochastic Differential Equation
(SDE):
\begin{equation}\label{gs1}
dX_t \, = \, -\nabla V(X_t) \, dt + \, \sqrt 2 \, dW, \qquad
X_0(x)=x,
\end{equation}
where $W$ is a $\R^k$-valued Brownian motion.
One can also consider a convex $V\in C^{1,1}(U)$, where
$U\subset\R^k$ is a convex open set, and $V\equiv+\infty$
on $\R^k\setminus U$. Then $X$ solves the SDE
with reflection at the boundary $\partial U$ of $U$:
\begin{equation}\label{gs2}
dX_t \, = \, -\nabla V(X_t) \, dt + \, \sqrt 2 \, dW
+ {\bf n}(X_t) \, dL_t, \qquad
X_0(x)=x,
\end{equation}
where ${\bf n}$ is an inner normal vector to $\partial U$ and $L$ is
a continuous monotone non-decreasing process which increases only
when $X_t\in\partial U$. Equations like (\ref{gs1}) and (\ref{gs2})
with convex potentials arise in the theory of random interfaces. The
invariant measure $\gamma$ is typically a Gibbs measure on a
lattice. Interesting infinite-dimensional examples include
Stochastic PDEs with monotone gradient non-linearities or with
reflection. See subsection \ref{motivations} for an overview of the
literature.

\smallskip
Before stating the next theorem, we define the
relative Entropy functional; for all probability
measures $\mu$ on $H$ we set:
\begin{equation}\label{relent}
\RelativeEntropy{\mu}{\gamma} \, := \, \int_H \rho \, \log\rho\,d\gamma
\end{equation}
if $\mu=\rho\, \gamma$ for some $\rho\in L^1(\gamma)$, and $+\infty$ otherwise.
We recall that $\RelativeEntropy{\cdot}{\gamma}\geq 0$ by Jensen's inequality.

We also define the Wasserstein distance: given two probability measures
 $\mu,\,\nu$ on $H$, we set
\begin{equation}\label{defwas}
W_2(\mu,\nu):=\inf\left\{
\left[\int_{H\times H}\|y-x\|^2\,d\Sigma\right]^{\frac12}:\ \Sigma\in\Gamma(\mu,\nu)
\right\}.
\end{equation}
Here $\Gamma(\mu,\nu)$ is the set of all \emph{couplings} between $\mu$ and $\nu$:
it consists of all probability measures $\Sigma$ on $H\times H$ whose first
and second marginals are respectively $\mu$ and $\nu$, i.e. $\Sigma(B\times H)=\mu(B)$
and $\Sigma(H\times B)=\nu(B)$ for all $B\in\BorelSets{H}$. We set
$$
\ProbabilitiesTwo{H}:=\left\{\mu\in\Probabilities{H}:\
\int_H\Vert x\Vert^2\,d\mu(x)<\infty\right\}.
$$
It turns out that $W_2(\cdot,\cdot)$ is a distance on $\ProbabilitiesTwo{H}$
and that $(\ProbabilitiesTwo{H},W_2)$ is a complete and
separable metric space, whose convergence implies weak convergence,
see for instance \cite[Proposition~7.1.5]{ags}.
Then, we have the following result:
\begin{theorem}[Estimates on transition probabilities]\label{main2}
Let $\gamma$ be a log-concave probability measure on $H$ and let
$(\bbP_x)$ be as in Theorem \ref{main1}. Fix $x\in K$ and denote the
law of $X_t$ under $\bbP_x$ by $\law tx$. Then $[0,+\infty[\times K
\ni (t,x) \mapsto \nu_t^x\in\ProbabilitiesTwo{H}$ is continuous and
\[
\RelativeEntropy{\law tx}{\gamma} \, \leq \,
\inf_{\sigma\in\ProbabilitiesTwo{H}} \left\{
\frac1{2t} \int_H \|y-x\|^2\,d\sigma(y)
\ + \ \RelativeEntropy{\sigma}{\gamma} \right\}
\, < \, +\infty\qquad\forall t>0,
\]
so that $\law tx\ll\gamma$ for all $t>0$, $x\in K$. Moreover,
\[
W_2(\law tx,\law sx) \, \leq \, \sqrt{2\, \RelativeEntropy{\law \eps x}{\gamma}}
\, \sqrt{|t-s|}, \qquad t,\,s\geq\varepsilon,\,\,x\in K.
\]
\end{theorem}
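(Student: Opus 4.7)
The plan is to identify the curve of one-time marginals $t\mapsto \law tx$ with the Wasserstein gradient flow of the relative entropy $\RelativeEntropy{\cdot}{\gamma}$ in $(\ProbabilitiesTwo{H},W_2)$ starting from $\delta_x$, and then to read off both bounds from the standard regularising estimates that are available for gradient flows of displacement-convex functionals. Log-concavity of $\gamma$ yields, via a Borell--Brascamp--Lieb type argument extended to the Hilbert setting in \cite{ags}, that $\RelativeEntropy{\cdot}{\gamma}$ is geodesically convex on $(\ProbabilitiesTwo{H},W_2)$, hence the curve $\law tx$ satisfies the EVI$_0$
\[
\tfrac12 \frac{d^+}{dt} W_2^2(\law tx,\sigma) + \RelativeEntropy{\law tx}{\gamma} \leq \RelativeEntropy{\sigma}{\gamma}\qquad\forall\,\sigma\in\ProbabilitiesTwo{H},
\]
together with the maximal-slope energy dissipation inequality $\int_s^t |\mu'|^2(r)\,dr \leq 2\bigl(\RelativeEntropy{\law sx}{\gamma}-\RelativeEntropy{\law tx}{\gamma}\bigr)$ for $0<s<t$.

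The identification is what I expect to be the main obstacle: starting from the Dirac mass $\delta_x$, one has to check that the probabilistic object built in Theorem~\ref{main1} coincides with the EVI/minimising-movement solution of the gradient flow. I would test the generator of $\cE$ against cylindrical functions in $C^1_b(A(\gamma))$, verify that $t\mapsto \law tx$ solves the corresponding Fokker--Planck continuity equation in the appropriate distributional sense, and then invoke the uniqueness result from \cite[Ch.~11]{ags}. Note that $\RelativeEntropy{\delta_x}{\gamma}=+\infty$, so all the regularising content has to be extracted from the dissipative structure at positive times rather than from any estimate on the initial datum.

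Granting the identification, I integrate the EVI$_0$ on $[0,t]$, use monotonicity $\RelativeEntropy{\law rx}{\gamma}\geq \RelativeEntropy{\law tx}{\gamma}$ for $r\leq t$, drop the non-negative term $W_2^2(\law tx,\sigma)/2$, and observe $W_2^2(\delta_x,\sigma)=\int\|y-x\|^2\,d\sigma$, to reach
\[
t\,\RelativeEntropy{\law tx}{\gamma}\leq \tfrac12\int_H\|y-x\|^2\,d\sigma(y) + t\,\RelativeEntropy{\sigma}{\gamma}.
\]
Dividing by $t$ and taking the infimum over $\sigma$ proves the first displayed inequality. Finiteness of the right-hand side is secured by the explicit choice $\sigma_\delta:=\gamma(B_\delta(x))^{-1}\gamma\restr{B_\delta(x)}$: since $x\in K=\supp\gamma$ we have $\gamma(B_\delta(x))>0$ for every $\delta>0$, whence $\RelativeEntropy{\sigma_\delta}{\gamma}=-\log\gamma(B_\delta(x))<\infty$ and $\int\|y-x\|^2\,d\sigma_\delta\leq \delta^2$.

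For the H\"older-$\tfrac12$ bound, combining $\RelativeEntropy{\cdot}{\gamma}\geq 0$ with the energy dissipation inequality above gives $\int_\eps^\infty |\mu'|^2\,dr\leq 2\,\RelativeEntropy{\law\eps x}{\gamma}$; Cauchy--Schwarz applied to the metric length estimate $W_2(\law tx,\law sx)\leq \int_s^t |\mu'|(r)\,dr$ then produces the claimed $\sqrt{2\,\RelativeEntropy{\law\eps x}{\gamma}}\,\sqrt{|t-s|}$ bound for $s,t\geq\eps$. Finally, joint continuity of $(t,x)\mapsto \law tx$ in $\ProbabilitiesTwo{H}$ follows by combining this uniform H\"older regularity in $t$ with the $W_2$-contraction $W_2(\law tx,\law ty)\leq \|x-y\|$ --- the standard consequence of EVI$_0$ applied to pairs of flows with different initial data.
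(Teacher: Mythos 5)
Your strategy --- identify the transition probability $\law tx$ with the Wasserstein gradient flow of $\RelativeEntropy{\cdot}{\gamma}$ started at $\delta_x$ and then read off both estimates from the regularising and H\"older bounds for that flow --- is exactly the paper's. Once the identification is granted, your derivation of the two displayed inequalities (integrate the EVI, use monotonicity of the entropy along the flow and drop the nonnegative term; then Cauchy--Schwarz on the energy dissipation) is correct and reproduces precisely the content of Theorem~\ref{tgflow}(iii) and \eqref{sat1}, and your finiteness argument with $\sigma_\delta=\gamma(\cdot\,|\,B_\delta(x))$ is the same device the paper uses inside Lemma~\ref{cla3}. Where you diverge is the identification step itself: you propose to test the generator of $\cE$ against cylindrical functions, verify a distributional Fokker--Planck equation, and invoke a uniqueness theorem from \cite{ags}. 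The paper instead establishes the coincidence (Theorem~\ref{nonsmooth}) constructively, first for smooth bounded-drift $V$ on $\R^k$ via It\^o's formula and Proposition~\ref{abs}, then for general log-concave $V$ on $\R^k$ by monotone approximation of $V$ by Lipschitz convex functions, then in infinite dimensions by finite-dimensional projections --- at each stage comparing resolvents and invoking the stability Theorem~\ref{stabflows}. This route is not merely a matter of taste: for a general log-concave $\gamma$ there is no pointwise-defined drift $\nabla V$ (in infinite dimensions there may be no density with respect to any reference measure at all), so the ``write the FP continuity equation and cite uniqueness'' plan does not have an object to cite uniqueness for, and the displayed structure is recovered only through the approximation scheme. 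Finally, a small gap in your continuity argument at $t=0$: the H\"older constant $\sqrt{2\RelativeEntropy{\law\eps x}{\gamma}}$ blows up as $\eps\downarrow 0$, so joint continuity of $(t,x)\mapsto\law tx$ on the closed half-line needs the separate ``starts from $\bar\mu$'' property of the gradient flow from Theorem~\ref{tgflow}; once that is invoked, combining it with the contraction $W_2(\law tx,\law ty)\le\|x-y\|$ as you suggest does close the argument.
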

Notice that the estimates given in Theorem~\ref{main2} do not
contain any constant depending on $H$ or on $\gamma$ and appear to
be of a structural nature. In the particular case
$\gamma\in\ProbabilitiesTwo{H}$ we have: $\RelativeEntropy{\law
tx}{\gamma} \, \leq \, \frac1{2t} \, W^2_2(\delta_x,\gamma) \, < \,
+\infty$, $\forall t>0$.

\medskip We consider now a sequence $(\gamma_n)$ of log-concave
probability measures on $H$ such that $\gamma_n$ converge weakly to
$\gamma$. We denote $K_n:=K(\gamma_n)$, $A_n:=A(\gamma_n)$,
$\Hn:=H^0(\gamma_n)$, in the notation of \eqref{defah}. We want to
consider situations where each $\Hn$ is an Hilbert space endowed
with a scalar product $\langle\cdot,\cdot\rangle_{\Hn}$ and an
associated $H$-continuous norm $\|\cdot\|_{\Hn}$ possibly different
from the scalar product and the norm induced by $H$. In order to
ensure that this family of norms converges (in a suitable sense) to
the norm of $H$ as $n\to\infty$, we will make the following
assumption:

\begin{assumption}\label{h_n}
There exists a constant $\kappa\geq 1$ such that
\begin{equation}\label{bastakappa}
\frac{1}{\kappa}\Vert h\Vert_H\leq
\Vert h\Vert_{H_n}\leq\kappa\Vert h\Vert_H
\qquad\forall h\in H_n,\,n\in\N.
\end{equation}
Furthermore, denoting by $\pi_n:H\to H_n$ the orthogonal projections
induced by the scalar product of $H$, we have
\begin{equation}\label{sig}
\lim_{n\to\infty}\Vert\pi_n(h)\Vert_{H_n}=\Vert h\Vert_H
\qquad\forall h\in H.
\end{equation}
\end{assumption}
This assumption guarantees in some weak sense that the geometry
of $H_n$ converges to the geometry of $H$; the case when
all the scalar products coincide with $\langle\cdot,\cdot\rangle_H$,
$H_n\subset H_{n+1}$ and $\cup_n H_n$ is dense in $H$ is obviously
included and will play an important role in the paper.

Let $(\bbP_x^n:x\in K_n)$ (respectively $(\bbP_x:x\in K)$) be the
Markov process in $[0,+\infty[^{K_n}$ associated to $\gamma_n$
(resp. in $[0,+\infty[^K$ associated to $\gamma$) given by
Theorem~\ref{main1}. We denote by $\bbP_{\gamma_n}^n:=\int
\bbP_{x}^n \, d\gamma_n(x)$ (resp. $\bbP_\gamma:= \int \bbP_{x} \,
d\gamma(x)$) the associated stationary measures.

With an abuse of notation, we say that a sequence of measures $({\bf
P}_n)$ on $C([a,b];H)$ converges weakly in $C([a,b];H_w)$ if, for
all $m\in\N$ and $h_1,\ldots,h_m\in H$, the process $(\langle
X_\cdot,h_i\rangle_H, \, i=1,\ldots,m)$ under $({\bf P}_n)$
converges weakly in $C([a,b];\R^m)$ as $n\to\infty$.

In this setting we have the following stability and tightness
result:
\begin{theorem}[Stability and tightness]\label{main3}
Suppose that $\gamma_n\to\gamma$ weakly in $H$ and that the norms of
$\Hn$ satisfy Assumption~\ref{h_n}. Then, for all $x_n\in K_n$ such
that $x_n\to x\in K$ in $H$:
\begin{itemize}
\item[(a)] $\bbP_{x_n}^n\to\bbP_x$ weakly in $H^{[0,+\infty[}$ as $n\to\infty$;
\item[(b)] for all $0<\varepsilon\leq
T<+\infty$, $\bbP_{x_n}^n\to\bbP_x$ weakly in
$C([\varepsilon,T];H_w)$;
\item[(c)] for all $0\leq T<+\infty$, $\bbP_{\gamma_n}^n\to\bbP_{\gamma}$ weakly in
$C([0,T];H_w)$.
\end{itemize}
\end{theorem}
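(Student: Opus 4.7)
The plan is to derive (a)--(c) from a single core statement: stability of the one-time laws $\law{t}{x_n}$ as $\gamma_n \to \gamma$ weakly and $x_n \to x$. The hard analytic content is gradient-flow stability in the Wasserstein space; the probabilistic conclusions then follow from the Markov property together with a path-tightness argument.

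\textbf{Step 1 (one-time marginals).} Following \cite{jko,ags}, each curve $t \mapsto \law{t}{x_n}$ is the unique steepest-descent flow of the relative entropy $\RelativeEntropy{\cdot}{\gamma_n}$ in $(\ProbabilitiesTwo{H}, W_2)$ starting from $\delta_{x_n}$; the gradients are computed with the inner product of $H^0(\gamma_n)$, but Assumption \ref{h_n} makes this uniformly comparable with the inner product of $H$. Log-concavity of $\gamma_n$ provides displacement convexity of $\RelativeEntropy{\cdot}{\gamma_n}$ uniformly in $n$. Combined with \eqref{bastakappa}--\eqref{sig} and $\gamma_n \to \gamma$ weakly, a Mosco-type convergence argument would show that the $\RelativeEntropy{\cdot}{\gamma_n}$ converge to $\RelativeEntropy{\cdot}{\gamma}$ in the sense required by the abstract stability theory of \cite{ags}, producing
\[
\sup_{t \in [0,T]} W_2(\law{t}{x_n}, \law{t}{x}) \longrightarrow 0 \qquad \text{for every } T > 0.
\]
In particular $P^n_t f(x_n) \to P_t f(x)$ for every $f \in C_b(H)$ and $t \geq 0$. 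The continuity $x \mapsto \bbP_x$ from Theorem \ref{main1}(b) combined with the Markov property then yields convergence of all finite-dimensional distributions, which is exactly (a).

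\textbf{Step 2 (upgrading to $C([\varepsilon, T]; H_w)$).} For (b) one needs tightness in $C([\varepsilon, T]; H_w)$. The key inputs come from Theorem \ref{main2}: a uniform-in-$n$ bound on $\RelativeEntropy{\law{\varepsilon}{x_n}}{\gamma_n}$ (via the variational estimate and $\gamma_n \to \gamma$), the Wasserstein modulus $W_2(\law{t}{x_n}, \law{s}{x_n}) \leq C\sqrt{|t-s|}$ for $t, s \geq \varepsilon$, and uniform second-moment bounds on $\law{t}{x_n}$ (which combine the entropy bound with exponential integrability coming from log-concavity). Using the reversible Dirichlet form structure of Theorem \ref{main1}(a), these marginal estimates can be turned into increment bounds of the form
\[
\E_{x_n}^n\bigl[(\langle X_t, h\rangle_H - \langle X_s, h\rangle_H)^2\bigr] \leq C\|h\|^2 |t-s|, \qquad t, s \geq \varepsilon,
\]
for each $h \in H$ (via a Lyons--Zheng/Fukushima-type decomposition), yielding tightness of each scalar projection in $C([\varepsilon, T]; \R)$. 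Together with uniform $L^2$-boundedness of $\sup_{t \in [\varepsilon, T]} \|X_t\|$ and weak compactness of $H$-balls, one obtains tightness in $C([\varepsilon, T]; H_w)$; (a) then identifies the limit, proving (b).

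\textbf{Step 3 (stationary case) and main obstacle.} For (c), the Step 2 argument applies down to $t = 0$ with initial law $\gamma_n$: all one-time marginals are identically $\gamma_n$, the entropy $\RelativeEntropy{\gamma_n}{\gamma_n}$ vanishes, and uniform second-moment bounds for $\gamma_n \to \gamma$ follow from log-concavity plus weak convergence; finite-dimensional convergence is obtained by integrating the convergence $\bbP_{x_n}^n \to \bbP_x$ against $\gamma_n \to \gamma$, using the continuity of $x \mapsto \bbP_x$. The main obstacle is \emph{Step 1}: establishing Wasserstein stability of the gradient flows when \emph{both} the driving entropy functional and the underlying Hilbert geometry vary. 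This requires a careful $\Gamma$-/Mosco-convergence argument for $\RelativeEntropy{\cdot}{\gamma_n}$, with log-concavity playing the central role of supplying the uniform displacement convexity needed to invoke the abstract stability theory of \cite{ags}. The secondary difficulty is the upgrade from $W_2$-control of marginals to path-wise tightness in $H_w$, for which the $W_2$ modulus is not sufficient on its own and reversibility plus the Dirichlet-form structure are essential.
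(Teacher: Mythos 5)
Your Step 1 is essentially the paper's approach: the paper proves a stability theorem for gradient flows (Theorem~\ref{stabflows}) based on $\Gamma$-convergence of the entropy functionals $\RelativeEntropy{\cdot}{\gamma_n}\to\RelativeEntropy{\cdot}{\gamma}$ (Lemma~\ref{lprod}) together with continuity/lower semicontinuity properties of the varying Wasserstein distances $W_{2,H_n}$ (Lemma~\ref{gW}), and then obtains convergence of all finite-dimensional distributions by an induction on the number of times, using Lemma~\ref{easy} to pass to the limit in the Markov disintegration. Your phrasing ``continuity of $x\mapsto\bbP_x$ combined with the Markov property'' hides this induction, but it captures the right mechanism, so this part is fine.

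The genuine gap is in Step 2, in the passage from marginal Wasserstein control to path-wise tightness under $\bbP^n_{x_n}$. You claim that the Lyons--Zheng/Fukushima forward--backward martingale decomposition yields the increment bound
\[
\E^n_{x_n}\bigl[\,|\langle h, X_t-X_s\rangle_H|^2\bigr]\le C\|h\|_H^2\,|t-s|,\qquad t,s\in[\varepsilon,T],
\]
directly under $\bbP^n_{x_n}$. This is false: the Lyons--Zheng decomposition requires the process to be \emph{stationary}, i.e.\ it applies under $\bbP^n_{\gamma_n}$, not under $\bbP^n_{x_n}$. Under $\bbP^n_{x_n}$ the time-reversed process is not a Markov process with the same generator, and no forward--backward martingale splitting is available. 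The paper circumvents this by a three-step transfer (Lemma~\ref{cla3}): (i) establish the BDG moment bound \eqref{cla2} \emph{under the stationary law} $\bbP^n_{\gamma_n}$ via Lyons--Zheng; (ii) prove, via the regularizing estimate of Theorem~\ref{tgflow}(iii) applied with the test measure $\gamma_n(\cdot\,|\,B_{R_\varepsilon}(x_n))$, that $\sup_n\RelativeEntropy{\nu^{n,x_n}_\varepsilon}{\gamma_n}<\infty$, and use the Markov property to write $d\bbP^n_{x_n}|_{[\varepsilon,T]}/d\bbP^n_{\gamma_n}|_{[\varepsilon,T]} = \rho^n_\varepsilon(X_\varepsilon)$, so the relative entropy $\RelativeEntropy{\bbP^n_{x_n}|_{[\varepsilon,T]}}{\bbP^n_{\gamma_n}|_{[\varepsilon,T]}}$ is uniformly bounded; (iii) invoke the entropy inequality \eqref{entrine} to transfer tightness from $\bbP^n_{\gamma_n}$ to $\bbP^n_{x_n}$. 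The key point is that the Radon--Nikodym density $\rho^n_\varepsilon$ is only entropy-bounded, not $L^\infty$-bounded, so you cannot multiply the BDG estimate through; one really needs the entropy inequality to upgrade. Your proposal also misses the auxiliary ingredient that the relative-entropy bound itself comes from the Wasserstein regularizing effect, which is what makes $\varepsilon>0$ essential in part (b). Without these steps, your tightness argument for (b) does not close.

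For Step 3 (part (c)), the argument is straightforward as you say, precisely because in the stationary case Lyons--Zheng applies directly at $t=0$, and this matches the paper.
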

This stability property means that the weak convergence
of the invariant measures $\gamma_n$ and a suitable convergence
of the norms $\|\cdot\|_{\Hn}$ to $\|\cdot\|_H$ imply the convergence
in law of the associated processes, starting from {\it any} initial condition.
Notice also statement (b) makes sense, because Theorem~\ref{main1}(c) gives that our
processes have continuous modifications in $C(]0,+\infty[;H)$ (however, we are
able to prove tightness only for the weak topology of $H$).

\medskip
Finally, our approach yields naturally the following
\begin{theorem}[Uniqueness in $\ProbabilitiesTwo{H}$ of the invariant measure]\label{main4}
Let $\gamma\in\ProbabilitiesTwo{H}$. If $\mu\in\ProbabilitiesTwo{H}$
is an invariant measure of $(P_t)_{t\geq 0}$, i.e.
$\mu(P_tf)=\mu(f)$ for all $f\in C_b(K)$ and $t\geq 0$, then
$\mu=\gamma$.
\end{theorem}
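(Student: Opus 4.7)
The plan is to leverage the entropic a priori bound of Theorem~\ref{main2} together with the convexity of the relative Entropy $\RelativeEntropy{\cdot}{\gamma}$ to deduce that $\RelativeEntropy{\mu}{\gamma}=0$; from this, $\mu=\gamma$ will follow via the equality case of Jensen's inequality applied to the strictly convex function $r\mapsto r\log r$.

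First I would rewrite the invariance hypothesis in measure-theoretic form. Combining $\E_x[f(X_t)]=P_tf(x)$ from Theorem~\ref{main1}(b) with Fubini (and a standard monotone-class approximation to pass from $C_b(K)$ to bounded Borel functions), the condition $\mu(P_tf)=\mu(f)$ becomes the measure identity $\mu=\int_K \law tx\,d\mu(x)$ for every $t\geq 0$. The heart of the argument is then to apply Theorem~\ref{main2} with the admissible choice $\sigma=\gamma$, which is permissible precisely because $\gamma\in\ProbabilitiesTwo{H}$. Since $\RelativeEntropy{\gamma}{\gamma}=0$, this yields $\RelativeEntropy{\law tx}{\gamma}\leq\frac{1}{2t}\int_H\|y-x\|^2\,d\gamma(y)$ for every $t>0$ and $x\in K$. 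Integrating against $\mu$ and using the convexity of the relative Entropy in its first argument (an immediate consequence of Jensen applied to $r\mapsto r\log r$), one obtains
\[
\RelativeEntropy{\mu}{\gamma}\leq \int_K\RelativeEntropy{\law tx}{\gamma}\,d\mu(x)\leq\frac{1}{2t}\int_K\!\int_H\|y-x\|^2\,d\gamma(y)\,d\mu(x)\leq \frac{\SecondMoment{\mu}+\SecondMoment{\gamma}}{t},
\]
where the last inequality uses $\|y-x\|^2\leq 2(\|y\|^2+\|x\|^2)$ together with $\mu,\gamma\in\ProbabilitiesTwo{H}$.

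The conclusion is then immediate: letting $t\to+\infty$ forces $\RelativeEntropy{\mu}{\gamma}\leq 0$, and combined with the standard lower bound $\RelativeEntropy{\cdot}{\gamma}\geq 0$ this gives $\RelativeEntropy{\mu}{\gamma}=0$, hence $\mu=\gamma$. I do not anticipate any genuine obstacle here: Theorem~\ref{main2} has done all the analytical work, and the only items to verify are the measure-theoretic reformulation of invariance and the convexity of $\RelativeEntropy{\cdot}{\gamma}$ in its first argument, both standard. The main mild subtlety is to make sure the $\ProbabilitiesTwo{H}$ assumption on $\mu$ is genuinely used --- without it the double integral above need not be finite and the argument breaks down at the very first step.
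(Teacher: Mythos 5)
Your proof is correct, but it takes a genuinely different route from the paper. The paper's own argument is shorter: by the representation \eqref{rdi12}, invariance of $\mu\in\ProbabilitiesTwo{H}$ means precisely that $\Semi{\mu}{t}\equiv\mu$ is a \emph{constant} gradient flow of $\RelativeEntropy{\cdot}{\gamma}$; plugging this into the variational evolution inequality \eqref{EVI} with $\frac{d}{dt}W_2^2(\mu,\nu)=0$ gives at once $\RelativeEntropy{\mu}{\gamma}\leq\RelativeEntropy{\nu}{\gamma}$ for all $\nu\in\ProbabilitiesTwo{H}$, i.e.\ $\mu$ is a minimizer, and the strict convexity of $r\mapsto r\log r$ identifies the unique minimizer as $\gamma$. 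You instead bypass the EVI and work directly from the a priori entropy bound of Theorem~\ref{main2}: take $\sigma=\gamma$, integrate the estimate $\RelativeEntropy{\law tx}{\gamma}\leq\frac{1}{2t}\int\|y-x\|^2\,d\gamma(y)$ against $\mu$, pass the entropy under the integral by convexity (or, equivalently, by the duality formula \eqref{duality}), and let $t\to+\infty$. Both arguments ultimately rest on the same gradient-flow machinery, since the entropy estimate in Theorem~\ref{main2} is itself the regularizing estimate Theorem~\ref{tgflow}(iii), but your version makes the role of $\gamma\in\ProbabilitiesTwo{H}$ concrete (finiteness of the double integral), whereas in the paper it enters only through membership of $\gamma$ in $D(F)$. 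One trivial slip: what you denote $\SecondMoment{\mu}+\SecondMoment{\gamma}$ should be the second moments themselves, $\SecondMomentSquare{\mu}+\SecondMomentSquare{\gamma}$, not their square roots; this does not affect the argument.
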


\subsection{Motivations and a survey of the literature}\label{motivations}

Existence and uniqueness for stochastic equations like \eqref{gs1}
and \eqref{gs2} in finite dimension are classical problems in
probability theory, starting from \cite{skorohod} and \cite{tanaka}.
In \cite{cepa}, existence and uniqueness of strong solutions are
proven for a general convex potential $V$. The Dirichlet form
approach is detailed in \cite{fot}.

Natural generalizations of \eqref{gs1} to the infinite dimension are
provided by stochastic partial differential equations (SPDEs): see
Chap. 8 of \cite{dpz2} and \cite{dpr}. SPDEs with reflection, which
generalize \eqref{gs2}, have also been studied: see \cite{nupa},
\cite{za02}, \cite{za03}, \cite{deza}. Unlike the finite-dimensional
case, no general result of existence and uniqueness is known, and in
fact it is not even clear how to define a general notion of
solution.

The main result of this paper is the general stability property of
this class of stochastic processes, given by Theorem \ref{main3}: if
the log-concave invariant measures $\gamma_n$ converge, then the
laws $\bbP^n_x$ of the associated stochastic processes also
converge. In order to appreciate the strength of this result, notice
that convergence of $\gamma_n$ is a much weaker information than
convergence (in any sense) of the drift $\nabla V_n$ in \eqref{gs1}.
In fact, every approach based either on the SDE or on the generator
and the Dirichlet form associated with the process, seems bound to
give only weaker results.

In the stability result, the limit process is identified by the
associated Dirichlet form \eqref{diri}: however, in the general
case, we can not write a stochastic equation for the limit, although
this can be (and has been) done in many interesting situations. Our
approach yields existence of stochastic processes associated with
any Dirichlet form of the gradient type \eqref{diri} with
log-concave reference measure: this also seems to be a new result
(see \cite{ak}).

\medskip Stochastic equations of the form \eqref{gs1} and \eqref{gs2}
are used as models for the random evolution of interfaces; in these
cases the invariant measure is typically a Gibbs measure on a
lattice with convex interaction: see \cite{spohn}, \cite{fusp} and
\cite{fu} for the physical background.

In many interesting cases, the Gibbs measure converges, under a
proper rescaling, to a non-degenerate Gaussian (or related) measure
on some function or distribution space. Convergence in law of the
associated stationary dynamics to the solution of a stochastic
partial differential equation is interpreted as convergence of the
equilibrium fluctuations of the interface around its macroscopic
hydrodynamic limit: see \cite{gos} and \cite{fuol}.

Such convergence results are obtained only in the stationary case
and the proofs use very particular properties of the model. For
instance, the techniques of \cite{fuol} are based on monotonicity
properties and can not be applied to many interesting situations.
Our Theorem~\ref{main3} extends the convergence result to more
general initial conditions and, being based only on the
log-concavity of the invariant measures, can be applied to a large
class of models. For a different (and weaker) approach based on
infinite dimensional integration by parts, see \cite{za04} and
\cite{za}.

Finally, we notice that log-concave measures are still widely used
as models for random interfaces: see \cite{sheffield} and references
therein.

\subsection{Plan of the paper}

We conclude this introduction with a short description of the plan
of the paper: Section~2 is devoted to the introduction of some basic
concepts and terminology, while in Section~3 we illustrate the model
case when $H=\R^k$ and $\nabla V$ is smooth, bounded and Lipschitz:
here almost no technical issue arises and the basic heuristic ideas
can be presented much better. In Section~4 we show the basic
convexity properties of the relative Entropy functional needed to
build in Section~5, by implicit time discretization, a
``Fokker-Planck'' semigroup in the Wasserstein space of probability
measures. Section~6 is devoted to the quite strong stability
properties of this semigroup, and these are used in Section~7 to
establish, starting from the smooth case, the link with Dirichlet
forms. Finally, in Section~8 we canonically build our process in
$K^{[0,+\infty[}$, and deduce its continuity properties from the
continuity properties of its transition probabilities, provided by
the Wasserstein semigroup. Finally, we adapt to our case some
general results from \cite{maro} on the existence of Markov
processes associated to Dirichlet forms to obtain the results stated
in Theorem~\ref{main1}(c).

\section{Notation and preliminary results}
\label{notation}
In this section we fix our main notation and recall the main
results on Wasserstein distance and optimal couplings.

Throughout the paper we consider a real separable Hilbert space $H$. For
$J\subset H$ closed we denote by
${\rm Lip}_b(J)$ the space of all bounded $\varphi:J\mapsto\R$
such that:
\[
[\varphi]_{{\rm Lip(J)}} \, := \, \sup\left\{ \frac{|\varphi(x)-\varphi(y)|}
{\|x-y\|} \ : \ x,\,y\in J,\,\,x\neq y \ \right\} \, < \, +\infty.
\]

\smallskip
\noindent
{\bf Measure-theoretic notation.} If $H$ is a separable Hilbert space,
we shall denote by $\BorelSets{H}$ the Borel $\sigma$-algebra of $H$, and by
$\Probabilities{H}$ the set of (Borel) probability measures in $H$.
Given a Borel map $\rr:H\to H$, the \emph{push forward} $\rr_\#\mu\in\Probabilities{H}$
of $\mu\in\Probabilities{H}$ is defined by $\rr_\#\mu(B):=\mu(\rr^{-1}(B))$ for all
$B\in\BorelSets{H}$.

The set of non-degenerate Gaussian measures on $H$, which all belong
to $\ProbabilitiesTwo{H}$, will be denoted by $G(H)$. Analogously,
we shall denote by $\GBorelSets{H}$ the $\sigma$-ideal of
\emph{Gaussian} null sets, \emph{i.e.} the sets $B\in\BorelSets{H}$
such that $\mu(B)=0$ for all $\mu\in G(H)$. Lebesgue measure in
$\R^k$ will be denoted by $\Leb{k}$.

\smallskip
\noindent
{\bf Wasserstein distance, optimal couplings and maps.}
We have already defined the class of \emph{couplings} between two
probability measures $\mu$ and $\nu$ on $H$ and the Wasserstein distance
$W_2(\mu,\nu)$: see \eqref{defwas}.
Existence of a minimizing $\Sigma$ in \eqref{defwas} is a simple consequence of the tightness
of $\Gamma(\mu,\nu)$; the class of \emph{optimal couplings} will be denoted by
$\Gamma_o(\mu,\nu)$:
\begin{equation}\label{optcou}
\Gamma_o(\mu,\nu) \, := \,\left\{\Sigma\in\Gamma(\mu,\nu):
\ \int_{H\times H}\|y-x\|^2\,d\Sigma = W_2^2(\mu,\nu) \right\}.
\end{equation}

In the special case when $\mu$ vanishes on all Gaussian null sets
(that corresponds to absolute continuity with respect to Lebesgue
measure in finite dimensions) it has been proved in Theorem~6.2.10
of \cite{ags} that there exists a unique optimal coupling $\Sigma$,
and it is induced by an optimal \emph{transport map} $\tt$, namely
$\Sigma=(\ii\times\tt)_\#\mu$ (the proof is based on the fact that
the non-Gateaux differentiability set of a Lipschitz function in $H$
is Gaussian null, see e.g. Theorem 5.11.1 in \cite{bogachev}). We
shall denote this optimal transport map by $\tt_{\mu}^\nu$. This is
one of the infinite-dimensional generalizations (see also
\cite{FeyUst} for another result in Wiener spaces) of the
finite-dimensional result ensuring that whenever
$\mu\in\ProbabilitiesTwo{\R^k}$ is absolutely continuous with
respect to $\Leb{k}$, then there exists a unique optimal transport
map that is also the gradient of a convex function.

When we have a sequence $(\gamma_n)\subset\ProbabilitiesTwo{H}$ as in
Assumption~\ref{h_n}, we can
introduce Wasserstein distances in $\ProbabilitiesTwo{A_n}$ using
two different scalar products: $ \langle \cdot,\cdot\rangle_H$
and $ \langle \cdot,\cdot\rangle_{\Hn }$. The Wasserstein distance
with respect to the former one is
indicated in the standard way $W_2(\cdot,\cdot)$, while we
introduce the notation:
\begin{equation}\label{www}
W_{2,\Hn}^2(\mu,\nu) :=
\inf\left\{\int_{A_n\times A_n}\|y-x\|_{\Hn }^2\,d\Sigma:\
\Sigma\in\Gamma(\mu,\nu)\right\}, \qquad
\mu,\,\nu\in \ProbabilitiesTwo{A_n};
\end{equation}
notice that if $x,\,y\in A_n$ then $x-y\in \Hn$,
so that $\|x-y\|_{\Hn }$ makes sense.
If $\mu,\,\nu$ are supported in $A_n$ we also denote
the class of optimal couplings in $\Gamma(\mu,\nu)$ with respect to the $\Hn $-distance
by $\Gamma_{\Hn,o}(\mu,\nu)$. By \eqref{bastakappa} the two distances
are equivalent.

\smallskip
\noindent {\bf Convergence of measures.} We will use two notions
convergence of measures: first the \emph{weak} convergence in
$\Probabilities{H}$, induced by the duality with $C_b(H)$; second,
the convergence in $\ProbabilitiesTwo{H}$ induced by the Wasserstein
distance. The two definitions are related by the following result
(see \cite{ags}, Theorem~5.1.13 and Remark 7.1.11):
\begin{lemma}\label{l_basic_tight}
If $(\mu_n)\subset\ProbabilitiesTwo{H}$, then
$\mu_n\to\mu$ in $\ProbabilitiesTwo{H}$ if and only if
$\mu_n\to\mu$ weakly and
\begin{equation}\label{cami1}
\lim_{n\to\infty}\int_H\|x\|^2\,d\mu_n=\int_H\|x\|^2\,d\mu.
\end{equation}
\end{lemma}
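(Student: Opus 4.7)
My plan is to prove the two implications separately, treating the forward direction as essentially immediate and the reverse one as the substantive content of the lemma.

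For the implication ``$W_2$-convergence $\Rightarrow$ weak convergence $+$ second moment convergence,'' the second moment statement is obtained from the reverse triangle inequality applied to $\delta_0$: since
\[
\left|W_2(\delta_0,\mu_n)-W_2(\delta_0,\mu)\right| \le W_2(\mu_n,\mu)
\]
and $W_2^2(\delta_0,\nu)=\int_H\|x\|^2\,d\nu$, the convergence \eqref{cami1} follows. Weak convergence is then obtained by noting $W_1\le W_2$ by Cauchy--Schwarz and invoking Kantorovich duality to conclude that $W_1$-convergence implies convergence against every bounded Lipschitz function, hence weak convergence on the Polish space $H$.

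For the harder implication, the plan is to invoke Skorokhod's representation theorem (valid since $H$ is Polish): from $\mu_n\to\mu$ weakly there exist random variables $X_n$, $X$ on a common probability space $(\Omega,\bbP)$ with laws $\mu_n$, $\mu$ and $X_n\to X$ $\bbP$-almost surely. The crucial step is to upgrade this to $L^2$-convergence. Since $\|X_n\|^2\to\|X\|^2$ almost surely and $\E[\|X_n\|^2]\to\E[\|X\|^2]<+\infty$ by hypothesis \eqref{cami1}, a Vitali/Scheff\'e-type argument shows that $\{\|X_n\|^2\}$ is uniformly integrable. Combined with $\|X_n-X\|^2\le 2\|X_n\|^2+2\|X\|^2$ and the uniform integrability of $\{\|X\|^2\}$, this gives uniform integrability of $\{\|X_n-X\|^2\}$, and together with the a.s.\ convergence $\|X_n-X\|^2\to 0$ yields $\E\|X_n-X\|^2\to 0$. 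Since $(X_n,X)_\#\bbP\in\Gamma(\mu_n,\mu)$ is an admissible coupling,
\[
W_2^2(\mu_n,\mu)\le \E\bigl[\|X_n-X\|^2\bigr]\longrightarrow 0,
\]
concluding the proof.

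The main obstacle is precisely the uniform integrability step; without the hypothesis \eqref{cami1}, weak convergence alone would allow mass escape to infinity in a way that is invisible to $C_b(H)$-duality but fatal to the quadratic cost. The equality of limits $\E[\|X_n\|^2]\to\E[\|X\|^2]$ combined with a.s.\ convergence rules out exactly this phenomenon, and is the analytic heart of the equivalence.
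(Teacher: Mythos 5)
Your proof is correct, but it takes a genuinely different route from the one the paper relies on. The paper does not prove this lemma itself; it refers to Theorem~5.1.13 and Remark~7.1.11 of \cite{ags}, where the hard implication is handled by purely analytic/measure-theoretic means: one passes to optimal couplings $\Sigma_n\in\Gamma_o(\mu_n,\mu)$, uses tightness of $\{\Sigma_n\}$ (inherited from tightness of $\{\mu_n\}$) and lower semicontinuity of the quadratic cost under narrow convergence, and then upgrades to genuine $W_2$-convergence by a truncation argument that exploits the uniform integrability of the second moments, i.e.\ condition \eqref{cami2} of the paper. Your argument instead invokes Skorokhod's representation theorem to produce almost surely convergent random variables $X_n\to X$ with the prescribed marginals, and then identifies the uniform integrability of $\{\|X_n\|^2\}$ via a Scheff\'e-type lemma before applying Vitali to the diagonal coupling $(X_n,X)_\#\bbP$. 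Both approaches are valid in this setting; yours is shorter and arguably more transparent for a probabilistic reader, because the uniform integrability enters in a single, explicit step, while the coupling/lower-semicontinuity route has the advantage of not needing the Skorokhod theorem at all and of exhibiting exactly the estimate \eqref{cami2} that the paper later reuses (e.g.\ in the proofs of Proposition~\ref{fpunique} and Lemma~\ref{lprodd}), so the analytic route is better integrated with the surrounding machinery. One minor point of hygiene: in your forward implication, the chain $W_1\le W_2\Rightarrow$ bounded-Lipschitz convergence $\Rightarrow$ weak convergence is fine, but it is even more direct to observe that weak convergence follows from $W_2$-convergence because, for any optimal coupling $\Sigma_n$, Markov's inequality gives $\Sigma_n(\{\|x-y\|>\eps\})\le W_2^2(\mu_n,\mu)/\eps^2\to 0$, i.e.\ convergence in probability of a coupled pair, which already implies narrow convergence of the marginals; this avoids invoking $W_1$ and Kantorovich duality, neither of which is otherwise used in the paper.
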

Notice that, for weakly converging sequences $(\mu_n)$, the convergence of
the second moments \eqref{cami1} is easly seen to be equivalent to
\begin{equation}\label{cami2}
    \lim_{R\uparrow\infty}\limsup_{n\to\infty}
    \int_{\{\|x\|\geq R\}}\|x\|^2\,d\mu_n=0.
\end{equation}
We recall that weak convergence of $\mu_n$ to $\mu$ implies
\begin{equation}\label{fatext}
  \liminf_{n\to\infty}\int_H f\,d\mu_n\geq\int_H f\,d\mu,
\end{equation}
for every lower semicontinuous function $f:H\to (-\infty,+\infty]$
bounded from below. We shall also often use the following extension,
involving integration with respect to a variable function: if $f_n$
are uniformly bounded from below and equi-continuous, we have
\begin{equation}\label{fatext1}
\liminf_{n\to\infty}\int_H f_n\,d\mu_n\geq
\int_H \liminf_{n\to\infty}f_n\,d\mu.
\end{equation}
The proof immediately follows by \eqref{fatext}, with the monotone
approximation with the continuous functions $g_k=\inf\limits_{n\geq k}f_n$.

\medskip\noindent
{\bf Log-concave probability measures and Entropy}.
The concept of log-concavity has been introduced in Assumption \ref{logc}.
Since this concept is crucial in this paper, we recall the following result.
\begin{proposition} [\cite{borell}, \cite{ags}, Theorem~9.4.11]\label{charalog}
Let $H=\R^k$.
Then $\gamma\in\Probabilities{H}$ is log-concave if and only if
it admits the following representation:
\begin{equation}\label{repgamma}
\gamma(B)=\int_{B\cap\{V<+\infty\}}e^{-V}\,d\Haus{d}
\qquad\forall B\in\BorelSets{\R^k},
\end{equation}
where $V:\R^k\to (-\infty,+\infty]$ is a suitable convex and lower
semicontinuous function, $d\geq 0$ is the dimension of $A(\gamma)$,
and $\Haus{d}$ is the $d$-dimensional Hausdorff measure.

If the dimension of $H$ is infinite, then $\gamma\in\Probabilities{H}$ is
log-concave if and only if all the finite dimensional projections of $\gamma$ are log-concave
and therefore admit the representation \eqref{repgamma} for some $V$ and $d$.
\end{proposition}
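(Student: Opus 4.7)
The plan is to split the proof along two axes: direction of the equivalence, and dimension (finite vs.\ infinite). Throughout I work first in $H=\R^k$ and reduce to the case $\Dim A(\gamma)=d=k$ by restricting attention to the affine hull of the support (equipping it with $\Haus{d}$ and the induced Euclidean structure, so that $\Haus{d}$ becomes Lebesgue measure on $A(\gamma)$).

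For the easy ``if'' direction in finite dimension, I would invoke the Prékopa--Leindler inequality. Given a convex lower semicontinuous $V:\R^d\to(-\infty,+\infty]$ with $\int e^{-V}\,d\Leb{d}=1$, and open sets $B,C\subset\R^d$, set
\[
f:=e^{-V}\nchi_{(1-t)B+tC},\quad g:=e^{-V}\nchi_B,\quad h:=e^{-V}\nchi_C.
\]
Convexity of $V$ yields $f((1-t)x+ty)\geq g(x)^{1-t}h(y)^t$ for $x\in B$, $y\in C$, so Prékopa--Leindler gives $\int f\,d\Leb{d}\geq \bigl(\int g\,d\Leb{d}\bigr)^{1-t}\bigl(\int h\,d\Leb{d}\bigr)^t$, which is \eqref{deflogconc}.

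The hard direction is ``only if'' in finite dimension, which is Borell's theorem. The main obstacle is proving $\gamma\ll\Leb{d}$ on $A(\gamma)$; once this is established, if $\rho:=d\gamma/d\Leb{d}$, applying \eqref{deflogconc} to small balls $B=B_r(x)$, $C=B_r(y)$ and sending $r\downarrow 0$ (using Lebesgue differentiation) yields $\log\rho((1-t)x+ty)\geq (1-t)\log\rho(x)+t\log\rho(y)$ at Lebesgue points, and the representation $\rho=e^{-V}$ with $V$ convex lsc follows. For the absolute continuity step I would argue as follows: since $A(\gamma)=\R^d$, the support $K$ has nonempty relative interior; pick $x_0\in\Interior K$ and a ball $B_\rho(x_0)\subset K$ with $\gamma(B_\rho(x_0))>0$. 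Given an open $N\subset\R^d$ with $\Leb{d}(N)$ small, a covering/rescaling argument combined with \eqref{deflogconc} applied to $B=N$ and $C=B_\rho(x_0)$ shows $\gamma(N)\leq \gamma(N)^{1-t}\gamma(B_\rho(x_0))^{-t}\gamma((1-t)N+tB_\rho(x_0))$, where the last factor can be bounded by $\Leb{d}(N)^{\alpha}$ times a constant using Brunn--Minkowski; letting $t\downarrow0$ after extracting the correct power yields $\gamma(N)\to 0$ as $\Leb{d}(N)\to 0$, giving absolute continuity. Extension from open to Borel sets is by outer regularity.

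For the infinite-dimensional statement, one direction is immediate: if $\pi:H\to F$ is a linear projection onto a finite-dimensional subspace $F$, then $\pi((1-t)B+tC)\supset (1-t)\pi(B)+t\pi(C)$, so log-concavity of $\gamma$ transfers to $\pi_\#\gamma$ on $F$. The converse uses that the Borel $\sigma$-algebra of the separable Hilbert space $H$ is generated by cylindrical sets and that \eqref{deflogconc} is stable under monotone limits of open sets; given $B$, $C$ open, approximate them from inside by finite unions of cylindrical open sets $B_n$, $C_n$ depending only on a common finite-dimensional projection $\pi_n$, apply the finite-dimensional result to $(\pi_n)_\#\gamma$, and pass to the limit in \eqref{deflogconc} using lower semicontinuity of $t\mapsto \log\gamma((1-t)B_n+tC_n)$ and the inner regularity of $\gamma$. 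The combination of the two directions, together with the finite-dimensional characterization applied to each $(\pi_n)_\#\gamma$, yields the stated equivalence.
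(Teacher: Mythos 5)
The paper does not prove this proposition; it is cited as Borell's theorem (\cite{borell}) and [ags, Theorem~9.4.11], so there is no ``paper proof'' to compare against. Evaluating your blind reconstruction on its own merits: the Pr\'ekopa--Leindler argument for the ``if'' direction is correct and standard, and the reduction to each $(\pi_n)_\#\gamma$ for the infinite-dimensional ``only if'' is the right idea (modulo a small slip: you quote the image inclusion $\pi((1-t)B+tC)\supset(1-t)\pi(B)+t\pi(C)$, but what is actually used is the preimage inclusion $\pi^{-1}((1-t)B'+tC')\supset(1-t)\pi^{-1}(B')+t\pi^{-1}(C')$).

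The genuine gap is in the absolute-continuity step of the finite-dimensional ``only if'' direction, which is precisely the hard content of Borell's theorem. Your proposed chain of estimates is circular: you claim $\gamma((1-t)N+tB_\rho(x_0))$ can be bounded by $\Leb{d}(N)^\alpha$ ``using Brunn--Minkowski,'' but Brunn--Minkowski gives a \emph{lower} bound on $\Leb{d}$ of a Minkowski sum, not any upper bound on the $\gamma$-measure of that sum; and bounding $\gamma$-measure by a power of Lebesgue measure is exactly what you are trying to establish. In fact, for $t$ near $1$ the set $(1-t)N+tB_\rho(x_0)$ is essentially $B_\rho(x_0)$, whose $\gamma$-measure is bounded away from $0$ regardless of how small $\Leb{d}(N)$ is, so the claimed inequality cannot hold. (Also, the displayed inequality as written does not follow from \eqref{deflogconc}; the correct rearrangement has exponents $1/(1-t)$ and $-t/(1-t)$ on the two factors.) Borell's proof of $\gamma\ll\Leb{d}$ on $A(\gamma)$ requires a different and substantially more delicate argument, typically via a reduction to dimension one and a slicing/Fubini step; it cannot be dispatched by the covering argument you sketch. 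The remainder of your finite-dimensional argument (Lebesgue differentiation at small balls to obtain $\log\rho$ midpoint-concave, hence a lsc convex $V$) is fine once absolute continuity is in hand.
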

If $\gamma$ is log-concave, the relative Entropy functional
$\mathcal H(\cdot|\gamma)\, $ \eqref{relent} enjoys a crucial
convexity property in terms of Wasserstein distance, which has been
discovered by McCann in \cite{McCann97} and further extended to the
infinite dimensional case in \cite{ags}.
\begin{proposition}[Displacement convexity of the relative Entropy]
  \label{prop:displ_conv}
  Let $\gamma\in \Probabilities H$ be log-concave
  and let $\mu^0,\,\mu^1\in \ProbabilitiesTwo H$ with finite relative
  entropy.
  Then there exists an optimal coupling $\Sigma\in\Gamma_o(\mu^0,\mu^1)$
  such that the curve in $\ProbabilitiesTwo H$
  \begin{equation}
    \label{eq:Gibbs:20}
    \mu^t:=\big((1-t)\pi^0+t\pi^1\big)_\#\Sigma,\qquad
    \left(\pi^i:(x^0,x^1)\in H\times H\mapsto x^i\in H,\quad i=0,1\right)
  \end{equation}
  satisfies
  \begin{equation}
    \label{eq:Gibbs:19}
    \mathcal H(\mu^t|\gamma)\le (1-t)\mathcal H(\mu^0|\gamma)+
    t\mathcal H(\mu^1|\gamma)\qquad
    \forall\, t\in [0,1].
  \end{equation}
\end{proposition}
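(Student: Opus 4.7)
The plan is to reduce to the finite-dimensional case via Proposition~\ref{charalog}, and then to pass to the limit by finite-dimensional projections; the key analytical ingredient is the joint weak lower semicontinuity of the map $(\nu,\gamma)\mapsto\mathcal H(\nu|\gamma)$. When $H=\R^k$, Proposition~\ref{charalog} gives $\gamma=e^{-V}\Haus d$ on $A(\gamma)$ with $V$ convex and lower semicontinuous, whence the decomposition
\[
\mathcal H(\mu|\gamma)=\int f\log f\,d\Haus d+\int V\,d\mu,\qquad \mu=\rho\gamma,\ f:=\rho e^{-V}.
\]
If $\mu^0\ll\Haus d$ on $A(\gamma)$, Brenier's theorem yields a unique optimal transport map $T$ with $\Sigma:=(\ii\times T)_\#\mu^0$ optimal and $\mu^t=T^t_\#\mu^0$, $T^t:=(1-t)\ii+tT$. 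The potential piece is convex in $t$ by convexity of $V$ along the straight lines $s\mapsto T^s(x)$, while the Boltzmann-Shannon piece is displacement convex by McCann's change-of-variables argument, based on concavity of $s\mapsto\log\det((1-s)I+sA)$ for $A$ symmetric non-negative. General $\mu^0,\mu^1$ with finite entropy are recovered by a small Gaussian convolution and weak lower semicontinuity of $\mathcal H(\cdot|\gamma)$.

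For the infinite-dimensional step, choose an increasing family of finite-dimensional subspaces $V_n\subset H$ with $\bigcup_n V_n$ dense, and let $\pi_n:H\to V_n$ be the orthogonal projections. Set $\gamma_n:=(\pi_n)_\#\gamma$ and $\mu^i_n:=(\pi_n)_\#\mu^i$; each $\gamma_n$ is log-concave on $V_n$ by Proposition~\ref{charalog}, and $\gamma_n\rightharpoonup\gamma$ weakly in $H$ since $\pi_n\to\ii$ pointwise. Data processing gives $\mathcal H(\mu^i_n|\gamma_n)\le\mathcal H(\mu^i|\gamma)$, and identifying $(d\mu^i_n/d\gamma_n)\circ\pi_n$ with the $\gamma$-conditional expectation of $d\mu^i/d\gamma$ given $\sigma(\pi_n)$, martingale convergence together with de la Vall\'ee Poussin uniform integrability for $s\log s$ yields $\mathcal H(\mu^i_n|\gamma_n)\uparrow\mathcal H(\mu^i|\gamma)$. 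By the finite-dimensional step, pick $\Sigma_n\in\Gamma_o(\mu^0_n,\mu^1_n)$ so that the interpolation $\nu^{t,n}:=((1-t)\pi^0+t\pi^1)_\#\Sigma_n$ satisfies $\mathcal H(\nu^{t,n}|\gamma_n)\le(1-t)\mathcal H(\mu^0_n|\gamma_n)+t\mathcal H(\mu^1_n|\gamma_n)$. Lift each $\Sigma_n$ via the gluing lemma (using the disintegrations of $\mu^0,\mu^1$ along $\pi_n$) to $\tilde\Sigma_n\in\Gamma(\mu^0,\mu^1)$ with $(\pi_n\times\pi_n)_\#\tilde\Sigma_n=\Sigma_n$; tightness of fixed marginals gives a weakly convergent subsequence $\tilde\Sigma_n\rightharpoonup\tilde\Sigma\in\Gamma(\mu^0,\mu^1)$. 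Setting $\tilde\mu^t:=((1-t)\pi^0+t\pi^1)_\#\tilde\Sigma$ and $\tilde\mu^{t,n}$ analogously, one has $\nu^{t,n}=(\pi_n)_\#\tilde\mu^{t,n}\rightharpoonup\tilde\mu^t$ weakly in $H$; the joint lower semicontinuity $\mathcal H(\nu|\gamma)=\sup_{\phi\in C_b(H)}\{\int\phi\,d\nu-\log\int e^\phi\,d\gamma\}$ then delivers
\[
\mathcal H(\tilde\mu^t|\gamma)\le\liminf_n\mathcal H(\nu^{t,n}|\gamma_n)\le(1-t)\mathcal H(\mu^0|\gamma)+t\mathcal H(\mu^1|\gamma).
\]

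The main obstacle is verifying that $\tilde\Sigma\in\Gamma_o(\mu^0,\mu^1)$, i.e.\ that the limit coupling is actually \emph{optimal} on $H$, as required by the statement. This is not automatic: the lift of a $V_n$-optimal $\Sigma_n$ can carry arbitrary transport cost along $V_n^\perp$. The gluing must therefore be designed so that $\int\|y-x\|^2\,d\tilde\Sigma_n\le W_{2,V_n}^2(\mu^0_n,\mu^1_n)+\delta_n$ with $\delta_n\to 0$, for instance by coupling the fiber measures along $\pi_n$ optimally on $V_n^\perp$; the density of $\bigcup_n V_n$ in $H$ forces $\delta_n\to 0$. Weak lower semicontinuity of the quadratic cost, together with $W_{2,V_n}^2(\mu^0_n,\mu^1_n)\le W_2^2(\mu^0,\mu^1)$ (projections are non-expansive), then gives $\int\|y-x\|^2\,d\tilde\Sigma\le W_2^2(\mu^0,\mu^1)$, which upgrades $\tilde\Sigma$ to an optimal coupling and completes the argument.
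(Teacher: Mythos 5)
The paper does not supply its own proof of Proposition~\ref{prop:displ_conv}: it records the statement as a citation of McCann \cite{McCann97} for the finite-dimensional case and of \cite[Theorem~9.4.11]{ags} for the infinite-dimensional extension, and then uses the result as a black box in Section~4. There is therefore no in-paper argument to compare against; what follows is an assessment of your argument on its own merits.

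Your strategy---reduce to $H=\R^k$ via Proposition~\ref{charalog} and McCann's change-of-variables identity, then pass to the limit through finite-dimensional projections---is sound and yields a self-contained proof. A few remarks. In the finite-dimensional step the Gaussian-mollification detour is superfluous: $\mathcal H(\mu^i|\gamma)<\infty$ with $\gamma=e^{-V}\Haus{d}$ already forces $\mu^i\ll\gamma\ll\Haus{d}$ on $A(\gamma)$, so Brenier's theorem applies directly to $\mu^0_n,\mu^1_n$ after identifying $A(\gamma_n)$ with a Euclidean space. You have correctly identified the genuine delicacy of the projection argument---that the weak limit $\tilde\Sigma$ of lifted finite-dimensional optimal couplings be optimal in $H$---and your fix does work, in fact more easily than you suggest: whatever measurable coupling is chosen on the $V_n^\perp$-fibers (optimal, product, anything), the extra quadratic cost along $V_n^\perp$ is dominated by
\begin{equation*}
2\int_H\|(I-\pi_n)x\|^2\,d\mu^0(x)+2\int_H\|(I-\pi_n)y\|^2\,d\mu^1(y),
\end{equation*}
which tends to $0$ by dominated convergence because $\mu^0,\mu^1\in\ProbabilitiesTwo H$ and $\pi_n\to\ii$ pointwise. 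Adding the orthogonal contribution $W_{2,V_n}^2(\mu^0_n,\mu^1_n)\leq W_2^2(\mu^0,\mu^1)$ (projections are $1$-Lipschitz) and invoking the weak lower semicontinuity of the quadratic cost (as in \eqref{lsc}) yields $\tilde\Sigma\in\Gamma_o(\mu^0,\mu^1)$. The remaining ingredients---joint lower semicontinuity of $(\nu,\gamma)\mapsto\mathcal H(\nu|\gamma)$ from the duality formula \eqref{duality}, the data-processing inequality $\mathcal H(\mu^i_n|\gamma_n)\leq\mathcal H(\mu^i|\gamma)$, and the commutation $(\pi_n)_\#\circ\big((1-t)\pi^0+t\pi^1\big)_\#=\big((1-t)\pi^0+t\pi^1\big)_\#\circ(\pi_n\times\pi_n)_\#$ by linearity of $\pi_n$---are all correctly deployed. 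You should, however, spell out the measurable construction of the fiber lift (disintegration of $\mu^i$ along $\pi_n$ plus a measurable selection of a fiber coupling); this is standard but not free, and is the one step your sketch genuinely elides.
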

When $H=\R^k$ is finite dimensional and $\mu^0$ is absolutely
continuous w.r.t.\ the Lebesgue measure then the optimal coupling
$\Sigma=(\ii\times \tt)_\#\mu^0$ is unique, so that
\begin{equation}
  \label{eq:Gibbs:22}
  \mu^t=\big((1-t)\ii+t\,\tt\big)_\#\mu^0.
\end{equation}

\section{From Fokker-Planck equation to Wasserstein gradient flows}
\label{tuttolega}

In this section we illustrate the known connections between
solutions of the SDE \eqref{gs1}, solutions to Fokker-Planck
equations, Dirichlet semigroups and Wasserstein gradient flows in
the model case when the drift term in the SDE is the bounded
gradient $\nabla V$ of a smooth function $V:\R^k\to\R$ satisfying:
\[
\|\nabla V(x)-\nabla V(y)\| \leq  L \|x-y\| \qquad\forall x,\,y\in\R^k
\]
for some $L>0$. We shall also assume that all derivatives of $V$ are
bounded and that $\gamma=\exp(-V) \, \Leb{k}$ is a log-concave
probability measure in $\R^k$. Notice that this implies that $V$ is
convex, and also (see Appendix \ref{properties}) that there exist
constants $A\in\R$ and $B>0$ such that $V(x)\geq A+B\|x\|$ for all
$x\in\R^k$.

All theories mentioned above have a much larger realm of validity
(for instance, much less regular drift terms in the SDE \eqref{gs1}
are allowed), but for our purposes it suffices to show connections
and a few a priori estimates in the smooth, bounded, Lipschitz case:
more general cases will follows thanks to the stability
Theorem~\ref{main3} (or its Wasserstein counterpart
Theorem~\ref{stabflows}).

\medskip
Let us fix $k$ independent standard Brownian motions
$\{W^1,\ldots,W^k\}$ on a probability space. We consider the
$\R^k$-valued Brownian motion $(W_t)_{t\geq 0}$, where $W=(
W^1,\ldots,W^k)$. Since $\nabla V$ is bounded and Lipschitz
continuous, it is well known that, for all $x\in \R^k$, there exists
a unique solution $(X_t(x): t\geq 0)$ of the SDE
\begin{equation}\label{gs3}
dX_t \, = \, -\nabla V(X_t) \, dt + \, \sqrt 2 \, dW, \qquad
X_0(x)=x.
\end{equation}
Notice that $(X_t(x)-X_t(y), t\geq 0)$ solves almost surely an
ordinary differential equation, since the stochastic terms $dW$
cancel out; then one easily obtains from the convexity of $V$ that
$t\mapsto\|X_t(x)-X_t(y)\|^2$ is non-increasing in $[0,+\infty[$
almost surely. As a consequence, a.s.
\begin{equation}\label{eq:dr1}
\|X_t(x)-X_t(y)\|^2 \leq \|x-y\|^2, \qquad\forall \, x,y\in\R^k, \
t\geq 0.
\end{equation}
For all $x\in\R^k$, $t\geq 0$ and
$\mu_0\in\Probabilities{\R^k}$ we set:
\begin{equation}\label{eq:dr1.5}
\law tx := \text{law of } X_t(x), \qquad \mu_t := \int \law tx \,
d\mu_0(x) \, \in\Probabilities{\R^k}.
\end{equation}
By \eqref{eq:dr1}, the map $x\mapsto\law tx$ is weakly continuous,
and therefore $\mu_t$ is well defined. Moreover, the continuity of
the process $(X_t(x))_{t\geq 0}$ yields weak continuity of
$t\mapsto\law tx$ and $t\mapsto\mu_t$.

It is a trivial consequence of It\^o's formula that $\mu_t$ solves
the Fokker-Planck equation in the sense of distributions in
$]0,+\infty[\times\R^k$:
\begin{equation}\label{fk}
\frac d{dt} \, \mu_t = \Delta\mu_t + \nabla \cdot (\nabla V\mu_t);
\end{equation}
this means that
\begin{equation}\label{fk1}
\frac d{dt} \int_{\R^k} \varphi \ d\mu_t =
\int_{\R^k} \left(\Delta\varphi  -
\langle \nabla V,\nabla\varphi\rangle \right) \, d\mu_t
\qquad\forall\varphi\in C^\infty_c(\R^k),
\end{equation}
and the initial condition at $t=0$ is attained in the following weak sense:
\begin{equation}\label{fk2}
\lim_{t\downarrow 0}\int_{\R^k}
\varphi\,d\mu_t=\int_{\R^k}\varphi\,d\mu_0, \qquad\forall \,
\varphi\in C^\infty_c(\R^k).
\end{equation}
Equivalently, \eqref{fk1} and \eqref{fk2} can be grouped by saying
that for every $T>0$ and $\varphi\in C^\infty_c([0,T]\times\R^k)$ we
have
\begin{equation}\label{eq:Gibbs:3}
  \int_{\R^k}\varphi_T\,d\mu_T=\int_{\R^k}\varphi\,d\mu_0
  +\int_0^T\int_{\R^k}\Big(\partial_t\varphi_t+\Delta\varphi_t-
    \langle\nabla V,\nabla\varphi_t\rangle \Big)\,d\mu_t\,dt.
\end{equation}

\begin{proposition}[Uniqueness and stability of FP solutions]\label{fpunique}
For any $\mu_0\in\Probabilities{\R^k}$, the Fokker-Planck equation
\eqref{eq:Gibbs:3} has a unique solution in the class of weakly
continuous maps $t\mapsto\mu_t\in\Probabilities{\R^k}$. If
$\mu_0\in\ProbabilitiesTwo{\R^k}$, then the unique solution
$[0,+\infty[ \, \ni t\mapsto\mu_t\in\ProbabilitiesTwo{\R^k}$ is
continuous. In addition, $\mu_t$ is stable: $\mu^n_0\to\mu_0$ in
$\ProbabilitiesTwo{\R^k}$ implies $\mu^n_t\to\mu_t$ in
$\ProbabilitiesTwo{\R^k}$ for all $t\geq 0$.
\end{proposition}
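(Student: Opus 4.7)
The plan is to first observe that the SDE-generated curve $\mu_t$ defined in \eqref{eq:dr1.5} is itself a weakly continuous probability solution of \eqref{eq:Gibbs:3}: this is a direct application of It\^o's formula to $\varphi(X_t(x))$, followed by integration in $x$ against $\mu_0$. The work then consists in showing that any weakly continuous probability solution coincides with this one, and in establishing continuity and stability in $\ProbabilitiesTwo{\R^k}$.

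\emph{Uniqueness via duality with the backward Kolmogorov equation.} For fixed $\varphi\in C^\infty_c(\R^k)$ and $T>0$, I set $\psi_t(x):=\E[\varphi(X_{T-t}(x))]$ for $(t,x)\in[0,T]\times\R^k$. Since all derivatives of $V$ are bounded, differentiating the SDE in $x$ shows that $x\mapsto X_s(x)$ is a.s.\ smooth with spatial derivatives of every order bounded uniformly in $(x,s)\in\R^k\times[0,T]$ (the first-order bound $\|DX_s(x)\|_{\rm op}\le 1$ being already encoded in \eqref{eq:dr1}). Hence $\psi\in C^\infty_b([0,T]\times\R^k)$, and the Kolmogorov backward equation $\partial_t\psi_t+\Delta\psi_t-\la\nabla V,\nabla\psi_t\ra=0$ holds with $\psi_T=\varphi$. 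I next choose a cutoff $\chi_R\in C^\infty_c(\R^k)$ with $\chi_R\equiv 1$ on $B_R$, $\chi_R\equiv 0$ off $B_{2R}$, $|\nabla\chi_R|\le C/R$, $|\Delta\chi_R|\le C/R^2$, and apply the weak formulation \eqref{eq:Gibbs:3} to the legitimate test function $\chi_R\psi_t$. The $\chi_R L\psi_t$ contributions cancel and one is left with
\[
\int\chi_R\varphi\,d\mu_T=\int\chi_R\psi_0\,d\mu_0+\int_0^T\!\!\int_{\R^k}\!\bigl(\psi_t L\chi_R+2\la\nabla\chi_R,\nabla\psi_t\ra\bigr)\,d\mu_t\,dt,
\]
where $L:=\Delta-\la\nabla V,\nabla\cdot\ra$. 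By boundedness of $\psi$, $\nabla\psi$ and $\nabla V$, the commutator integrand is dominated pointwise by $C/R$, so the error vanishes as $R\to\infty$ and one obtains $\int\varphi\,d\mu_T=\int\E[\varphi(X_T(x))]\,d\mu_0=\iint\varphi\,d\nu_T^x\,d\mu_0$, identifying $\mu_T$ with the SDE-generated solution \eqref{eq:dr1.5}.

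\emph{$\ProbabilitiesTwo{\R^k}$-continuity and stability.} The bound $\|X_t(x)\|\le\|x\|+\|X_t(0)\|$ from \eqref{eq:dr1}, together with $\E\|X_t(0)\|^2\le C(t+t^2)$ coming from bounded drift and Brownian scaling, ensures $\mu_t\in\ProbabilitiesTwo{\R^k}$ whenever $\mu_0$ is. Given $\mu_0,\tilde\mu_0\in\ProbabilitiesTwo{\R^k}$ and an optimal coupling $\Sigma\in\Gamma_o(\mu_0,\tilde\mu_0)$, running the two SDEs driven by the \emph{same} Brownian motion from $\Sigma$-distributed initial points yields a coupling of $\mu_t$ with $\tilde\mu_t$, and \eqref{eq:dr1} delivers the pathwise $W_2$-contraction
\[
W_2^2(\mu_t,\tilde\mu_t)\le\int\E\|X_t(x)-X_t(y)\|^2\,d\Sigma(x,y)\le\int\|x-y\|^2\,d\Sigma=W_2^2(\mu_0,\tilde\mu_0),
\]
from which the stability assertion follows immediately. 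For temporal continuity, coupling $\mu_t$ and $\mu_s$ via $(X_t(x),X_s(x))_\#\mu_0$ and using $\|X_t(x)-X_s(x)\|\le\|\nabla V\|_\infty|t-s|+\sqrt 2\,\|W_t-W_s\|$ yields $W_2^2(\mu_t,\mu_s)\le C(|t-s|+|t-s|^2)$, uniformly in $\mu_0$.

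The only genuine obstacle is the truncation step in the uniqueness argument: the backward solution $\psi$ lives in $C^\infty_b$ but not $C^\infty_c$, so \eqref{eq:Gibbs:3} is not directly applicable. The uniform $1/R$-decay of the commutator $[\chi_R,L]\psi$, granted by the boundedness of $\nabla V$ and of the spatial derivatives of $\psi$, is what closes the gap without requiring any a priori regularity or moment bound on the candidate solution $\mu_t$.
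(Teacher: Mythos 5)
Your proof is correct and uses the same core strategy as the paper for the key step (uniqueness), namely duality against the backward Kolmogorov equation $\partial_t\psi+\Delta\psi-\langle\nabla V,\nabla\psi\rangle=0$ with terminal data $\varphi$. The implementations diverge in a pleasant way. For uniqueness, you construct the backward solution probabilistically as $\psi_t(x)=\E[\varphi(X_{T-t}(x))]$ and get $C^\infty_b$ regularity by differentiating the stochastic flow in $x$, then run an explicit cutoff $\chi_R$ with the commutator $[\chi_R,L]\psi=O(1/R)$; the paper instead solves the adjoint PDE directly, invokes parabolic regularity (maximum principle applied to the derivatives), and asserts by a mollification argument that \eqref{eq:Gibbs:6} extends to $C_b$ test functions with bounded derivatives. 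Both routes are sound; yours replaces a PDE-regularity plus mollification black box by a transparent cutoff estimate that visibly uses only the boundedness of $\nabla V$, $\nabla\psi$ and the fact that $\mu_t$ is a probability measure. For the $\ProbabilitiesTwo{\R^k}$-continuity and stability, the two proofs genuinely differ: the paper establishes the second-moment bounds \eqref{eq:Gibbs:3bis}--\eqref{eq:Gibbs:3ter} by comparison with a Bessel process and then feeds them into the criterion \eqref{cami2} and the representation $\mu_t=\int\nu_t^x\,d\mu_0$, whereas you bypass the Bessel comparison entirely by exploiting the pathwise bounds $\|X_t(x)\|\le\|x\|+\|X_t(0)\|$ (from \eqref{eq:dr1}) and $\|X_t(x)-X_s(x)\|\le\|\nabla V\|_\infty|t-s|+\sqrt2\|W_t-W_s\|$, yielding direct $W_2$ estimates through synchronous coupling. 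This is shorter and gives more: you obtain the $W_2$-contraction $W_2(\mu_t,\tilde\mu_t)\le W_2(\mu_0,\tilde\mu_0)$ (which also appears implicitly later in the paper via the gradient-flow structure), from which the stated stability is immediate, and a uniform H\"older-$1/2$ estimate in time. The one place to be slightly more precise than your write-up is the claim that spatial derivatives of $X_s(x)$ of all orders are uniformly bounded; the order-$\ge 2$ derivatives satisfy linear ODEs driven by $-\nabla^2V$ with bounded inhomogeneities depending on lower-order derivatives and on $\nabla^3V,\ldots$, so the bounds are uniform on $[0,T]\times\R^k$ but $T$-dependent --- which is all you actually use.
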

\begin{proof}
We consider first the case $\mu_0=\delta_x$: it only remains to
prove that $[0,\infty)\ni t\mapsto\nu_t^x\in\ProbabilitiesTwo{H}$ is
continuous. Taking \eqref{cami2} into account, it is enough to prove
that
  \begin{equation}\label{eq:Gibbs:3bis}
    \sup_{0\le t\le T}\Expectation{\|X_t(x)\|^2}
    \leq C(1+\|x\|^2)<+\infty,
  \end{equation}
  \begin{equation}\label{eq:Gibbs:3ter}
  \sup_{0\le t\le T}\Expectation{\|X_t(x)\|^2 1_{\{\|X_t(x)\|\geq R\}}}
  \leq\omega(R)(1+\|x\|^2)\quad\text{with}\quad
  \text{$\omega(R)\downarrow 0$ as $R\uparrow +\infty$,}
  \end{equation}
where $C$ and $\omega$ depend only on $T$ and $\sup\|\nabla V\|$. We
apply It\^o's formula to $\|X_t(x)\|$ and find that
\[
\|X_t\| \leq \|x\| + T\sup\|\nabla V\| + (k-1) \int_0^t
\frac1{\|X_s(x)\|} \, ds + \sqrt2 \, \hat B_t, \qquad \forall \,
t\in[0,T],
\]
where $\hat B$ is a standard Brownian motion in $\R$. We define now
the process $(b_t)$, unique non-negative solution of
\[
b_t \, =\,  \|x\| + T\sup\|\nabla V\| + (k-1) \int_0^t \frac1{b_s}
\, ds + \sqrt2 \, \hat B_t, \qquad \forall \, t\geq 0.
\]
Then $(b_t)$ is a Bessel process of dimension $k$, i.e. $(b_t,t\geq
0)$ is equal in law to $(\|b_0\cdot h+\sqrt2 \, W_t\|,t\geq 0)$,
where $h\in\R^k$ with $\|h\|=1$: see \cite{reyo}. By It\^o's
formula, $\|X_t\|\leq b_t$ for all $t\in[0,T]$, almost surely. Then
\eqref{eq:Gibbs:3bis} and \eqref{eq:Gibbs:3ter} follow from standard
Gaussian estimates.

 Existence of FP solutions, as we have seen, is provided
by \eqref{eq:dr1.5}. Uniqueness can for instance be obtained by a
classical duality argument: let $\mu^1_t,\,\mu^2_t$ be two weakly
continuous solutions of \eqref{fk1}, and let
$\sigma:=\mu^1_t-\mu_t^2$ be their difference, satisfying
  \begin{equation}
    \label{eq:Gibbs:6}
    \int_{\R^k}\varphi_T\,d\sigma_T=
    \int_0^T\int_{\R^k}
    \Big(\partial_t\varphi_t+\Delta\varphi_t \, - \, \langle
    \nabla V,\nabla\varphi_t\rangle \Big)\,d\sigma_t\,dt
  \end{equation}
  for every $T>0$ and $\varphi\in C^\infty_{\rm c}([0,T]\times \R^k)$.
  By a mollification technique, it is not difficult to check that
  \eqref{eq:Gibbs:6} holds even for every
  function $\varphi\in C([0,T]\times\R^k)$ with $\partial_t\varphi$, $\nabla\varphi$ and
  $\nabla^2\varphi$ continuous and bounded in $[0,T]\times\R^k$.
  For given $\psi\in C^\infty_c(\R^k)$ we consider
  the solution $\varphi_t$ of the time reversed (adjoint)
  parabolic equation
  \begin{equation}
    \label{eq:Gibbs:8}
    \partial_t\varphi_t+\Delta\varphi_t - \langle
    \nabla V,\nabla\varphi_t\rangle=0\quad\text{in $(0,T)\times\R^k$},
    \qquad\varphi_T=\psi.
  \end{equation}
  Standard parabolic regularity theory (it suffices to use the maximum principle
  \cite{john} and the fact that the first and second order spatial derivatives of $\varphi$
  solve an analogous equation)
  shows that $\varphi$ is sufficiently regular to be used as a test function in
  \eqref{eq:Gibbs:6}: this leads to $\int\psi\,d\sigma_T=0$. As $\psi$
  is arbitrary, we obtain that $\mu_T^1=\mu_T^2$.

  The representation $\mu_t=\int\law tx\,d\mu_0(x)$, given in \eqref{eq:dr1.5},
  and the uniform estimates \eqref{eq:Gibbs:3bis}, \eqref{eq:Gibbs:3ter}
  easily imply the stability property.
\end{proof}

Notice that the measure $\gamma$ provides a \emph{stationary}
solution of \eqref{fk} (and it can be actually shown that all
solutions $\mu_t$ weakly converge to $\gamma$ as $t\to +\infty$); it
is also natural to consider initial conditions
$\mu_0=\rho_0\gamma\in \Probabilities{H}$ with $\rho_0\in
L^2(\gamma)$. In this class of initial data, one can consider the
variational formulation of the FP equation induced by the symmetric
Dirichlet form
\begin{equation}\label{dirigamma}
\cE_\gamma(\rho,\eta):=\int_{\R^k}\langle\nabla\rho,\nabla\eta\rangle\,d\gamma,
\qquad \rho,\,\eta\in W^{1,2}_\gamma(\R^k),
\end{equation}
where $W^{1,2}_\gamma(\R^k)$ is the weighted Sobolev space
\begin{equation}\label{weighted}
W^{1,2}_\gamma(\R^k):=\left\{\rho\in L^2(\gamma)\cap W^{1,1}_{\rm
loc}(\R^k):\ \int_{\R^k}\|\nabla\rho\|^2\,d\gamma<+\infty\right\}.
\end{equation}
\begin{proposition}\label{prop:Fkpre} For every $\rho_0\in
L^2(\gamma)$:
\begin{enumerate}
\item  there exists a unique $\rho_\cdot\in W^{1,2}_{\rm
loc}\left(]0,+\infty[;L^2(\gamma)\right)$ such that
\begin{equation}
  \label{eq:Gibbs:16}
  \frac{d}{dt}\langle\rho_t,\eta\rangle_{L^2(\gamma)}+\cE_\gamma(\rho_t,\eta)=0, \quad
  \forall\, \eta\in W^{1,2}_\gamma(\R^k),\qquad  \lim_{t\downarrow0}\rho_t=\rho_0 \quad\text{in
  }L^2(\gamma);
\end{equation}
 if $\rho_{\rm min}\leq\rho_0\leq\rho_{\rm max}$,
  then $\rho_t$ satisfies the same uniform bounds;
\item if $\rho_0\geq 0$ and $\int \rho_0 \, d\gamma=1$, then $\mu_t=\rho_t\gamma \in\Probabilities{H}$ provides
  the unique solution of \eqref{fk1} starting from
  $\mu_0=\rho_0\gamma$;
\item if $\rho_0\in C_b(\R^k)$, then, for all $t\geq 0$, the function
\begin{equation}\label{tuttolega1}
P_t \rho_0(x) := \E(\rho_0(X_t(x))) = \int \rho_0 \, d\law tx,
\qquad \forall \, x\in\R^k,
\end{equation}
provides a continuous version of $\rho_t$, i.e. $P_t
\rho_0(x)=\rho_t(x)$ for $\gamma$-a.e. $x$; moreover $P_t$ acts on
${\rm Lip}_b(\R^k)$ and
 \begin{equation}\label{feller}
 [P_t \rho]_{{\rm Lip}(\R^k)}\leq [\rho]_{{\rm Lip}(\R^k)}\qquad \ t\geq 0,
 \quad \rho\in {\rm Lip}_b(\R^k);
 \end{equation}
\item $(P_t)_{t\geq 0}$ has an extension to a symmetric strongly
continuous semigroup in $L^2(\gamma)$.
\end{enumerate}
 \end{proposition}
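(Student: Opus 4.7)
The plan is to treat the four items in order, the last two being the most delicate since they bridge the probabilistic definition of $P_t$ with the variational one produced by the Dirichlet form.

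For item (1), I would first check that $(\cE_\gamma, W^{1,2}_\gamma(\R^k))$ is a densely defined, closed, symmetric bilinear form on $L^2(\gamma)$: density follows from $C^\infty_c(\R^k)\subset W^{1,2}_\gamma(\R^k)\cap L^2(\gamma)$, while closedness is standard since $e^{-V}$ is a smooth positive weight. The classical correspondence between closed symmetric forms and non-negative self-adjoint operators (Hille--Yosida or functional calculus) produces a strongly continuous contraction semigroup $T_t$ in $L^2(\gamma)$, and $\rho_t:=T_t\rho_0$ is the unique solution of \eqref{eq:Gibbs:16}. Uniqueness comes from the energy identity $\frac{d}{dt}\|r_t\|_{L^2(\gamma)}^2=-2\cE_\gamma(r_t,r_t)\le 0$ for the difference $r_t$ of two solutions. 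The two-sided bounds $\rho_{\min}\le\rho_t\le\rho_{\max}$ follow from the Markovianity of $\cE_\gamma$; directly, one tests the evolution against $(\rho_t-\rho_{\max})^+\in W^{1,2}_\gamma$ (and symmetrically for the lower bound) and uses $\nabla(\rho_t-\rho_{\max})^+=\mathbf{1}_{\{\rho_t>\rho_{\max}\}}\nabla\rho_t$ to conclude $\frac{d}{dt}\|(\rho_t-\rho_{\max})^+\|_{L^2(\gamma)}^2\le 0$.

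For item (2), once $\rho_0\ge 0$ with $\int\rho_0\,d\gamma=1$, item (1) gives $0\le\rho_t$ and $\int\rho_t\,d\gamma=1$, so $\mu_t:=\rho_t\gamma$ is a weakly continuous curve in $\Probabilities{\R^k}$. For $\varphi\in C^\infty_c(\R^k)$, integration by parts against $\gamma=e^{-V}\Leb{k}$ gives
\[
\cE_\gamma(\rho_t,\varphi)=\int\langle\nabla\rho_t,\nabla\varphi\rangle e^{-V}\,dx
=-\int\rho_t\bigl(\Delta\varphi-\langle\nabla V,\nabla\varphi\rangle\bigr)\,d\gamma,
\]
so \eqref{eq:Gibbs:16} is exactly the weak Fokker--Planck equation \eqref{fk1}; the initial condition is inherited from $L^2(\gamma)$-convergence in (1). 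Uniqueness in the class of weakly continuous probability solutions is then Proposition~\ref{fpunique}.

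For item (3), the pathwise contraction \eqref{eq:dr1} yields both the Lipschitz bound $|P_t\rho(x)-P_t\rho(y)|\le[\rho]_{\mathrm{Lip}(\R^k)}\|x-y\|$ and, via dominated convergence, continuity of $P_t\rho_0$ for every $\rho_0\in C_b(\R^k)$. To identify $P_t\rho_0$ with $\rho_t$, I would first take $\rho_0\in C^\infty_c(\R^k)$. Itô's formula applied to $\rho_0(X_t(x))$ gives
\[
P_t\rho_0(x)=\rho_0(x)+\int_0^t P_s(L\rho_0)(x)\,ds,\qquad L\rho:=\Delta\rho-\langle\nabla V,\nabla\rho\rangle,
\]
and since $P_t\rho_0\in\mathrm{Lip}_b(\R^k)\subset W^{1,2}_\gamma(\R^k)$ by \eqref{feller} and $L\rho_0$ is bounded, pairing against $\eta\in W^{1,2}_\gamma$ and integrating by parts yields $\frac{d}{dt}\langle P_t\rho_0,\eta\rangle_{L^2(\gamma)}=-\cE_\gamma(P_t\rho_0,\eta)$. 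Uniqueness in (1) then forces $P_t\rho_0=\rho_t$ $\gamma$-a.e. For general $\rho_0\in C_b(\R^k)$, I would approximate by $\rho_0^n\in C^\infty_c(\R^k)$ with $\|\rho_0^n\|_\infty$ uniformly bounded and $\rho_0^n\to\rho_0$ both pointwise and in $L^2(\gamma)$: then $P_t\rho_0^n\to P_t\rho_0$ pointwise by dominated convergence and \eqref{eq:dr1.5}, while $T_t\rho_0^n\to T_t\rho_0$ in $L^2(\gamma)$ by the contractivity of $T_t$.

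For item (4), part (3) identifies $P_t$ with the symmetric $L^2(\gamma)$-contraction $T_t$ on the dense subspace $C_b(\R^k)\cap L^2(\gamma)$, so $P_t$ extends uniquely to $T_t$ on the whole $L^2(\gamma)$; strong continuity and the symmetry $\langle P_t\rho,\eta\rangle_{L^2(\gamma)}=\langle\rho,P_t\eta\rangle_{L^2(\gamma)}$ are built into the construction from the symmetric form $\cE_\gamma$. The main subtlety I anticipate lies in item (3): one must verify that the pointwise identity obtained from Itô's formula is compatible with the $L^2(\gamma)$-equivalence class produced by the Dirichlet form machinery, and that the approximation argument genuinely passes both to the continuous representative (via the contraction \eqref{eq:dr1}) and to the $L^2(\gamma)$-class (via item (1)). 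Everything else is routine once this bridge is in place.
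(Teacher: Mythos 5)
Your proposal is substantially correct and covers all four items, but the route you take for item~(3) differs from the paper's, and it contains a small but genuine slip. The paper identifies $P_t\rho_0$ with $\rho_t$ by a duality argument: it reduces, by linearity, to $\rho_0\gamma\in\Probabilities{\R^k}$, invokes item~(2) to see that $\rho_t\gamma$ is the unique Fokker--Planck solution, recognizes $\int\law tx\rho_0(x)\,d\gamma(x)$ as another such solution (by \eqref{eq:dr1.5} and Proposition~\ref{fpunique}), and then passes from the equality of these measures to $P_t\rho_0=\rho_t$ via the classical $\gamma$-reversibility of the diffusion \eqref{gs3}. You instead work at the level of the Kolmogorov equation, which is a perfectly viable alternative, but as written the step has a gap: Itô's formula applied to $\rho_0(X_t(x))$ yields
\[
\frac{d}{dt}\,\langle P_t\rho_0,\eta\rangle_{L^2(\gamma)}=\langle P_t(L\rho_0),\eta\rangle_{L^2(\gamma)},
\]
and integration by parts converts $\langle L f,\eta\rangle_{L^2(\gamma)}$ into $-\cE_\gamma(f,\eta)$ only when $L$ sits \emph{outside}, i.e.\ you need $\langle L(P_t\rho_0),\eta\rangle$, not $\langle P_t(L\rho_0),\eta\rangle$. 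To close the argument you must invoke either the backward Kolmogorov equation $\partial_t P_t\rho_0=L(P_t\rho_0)$ (standard parabolic regularity for smooth bounded coefficients), or equivalently $P_tL=LP_t$ on smooth compactly supported functions, or the symmetry of $P_t$ in $L^2(\gamma)$; none of these is the raw output of Itô applied to $\rho_0(X_t)$, so one of them should be stated explicitly. With this justification supplied, the identification goes through and the remainder of your argument (approximation of $C_b$ by $C^\infty_c$, and the extension in item~(4)) is sound. For items~(1), (2) and (4) you essentially reproduce the paper's reasoning, with the bonus that you spell out the maximum-principle test functions $(\rho_t-\rho_{\max})^+$; the paper only cites ``the well-known theory of variational evolution equations,'' so your more explicit treatment of item~(1) is a genuine improvement in exposition. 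Note also that to invoke Proposition~\ref{fpunique} in item~(2) in the way the paper does, one should verify $\rho_t\gamma\in\Probabilities{\R^k}$ by observing that $\eta\equiv1\in W^{1,2}_\gamma(\R^k)$ and $\cE_\gamma(\rho_t,1)=0$, giving conservation of total mass; you assert this but it deserves the one-line verification.
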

\begin{proof}
  Existence of a unique solution of \eqref{eq:Gibbs:16}
  follows by the well-known theory of variational evolution equations,
  as well as the uniform lower and upper bounds on $\rho_t$, being
  $\cE_\gamma$ a Dirichlet form: this proves point 1. Now,
  for $\varphi\in C^\infty_c(\R^k)$ we can choose $\eta=\varphi\exp(V)$
  in \eqref{eq:Gibbs:16} and integrate by parts in space to obtain \eqref{fk1}:
  this shows that $\mu_t$ is the unique solution of the FP equation, as stated in point
  2. Continuity of $P_t\rho_0$ and \eqref{feller} follow from \eqref{eq:dr1}; in order
  to prove that $\rho_t=P_t\rho_0$ $\gamma$-a.e., we can reduce by
  linearity to the case $\rho_0\gamma\in\Probabilities{\R^k}$ and
  then point 3 follows from point 2. Point 4 follows from standard $L^2(\gamma)$ estimates for
  equation \eqref{eq:Gibbs:16}.
  \end{proof}

\medskip
Using the convexity inequality
$\cE_\gamma(\eta,\eta)\geq\cE_\gamma(\rho,\rho)+2\cE_\gamma(\rho,\eta-\rho)$,
it is not difficult to show that an equivalent formulation of \eqref{eq:Gibbs:16}
is (this kind of formulation first appeared in \cite{ben}, in connection with
nonlinear evolution problems in Banach spaces)
\begin{equation}\label{EVIdiri}
\frac{d}{dt} \, \frac{1}{2} \, \|\rho_t-\eta\|_{L^2(\gamma)}^2+
\frac 12 \, \cE_\gamma(\rho_t,\rho_t)\leq \frac 12 \,
\cE_\gamma(\eta,\eta), \qquad\forall \, \eta\in
W^{1,2}_\gamma(\R^k).
\end{equation}
We are going to show an analogous property of the solutions $\mu_t$
of the Fokker-Planck equation \eqref{eq:Gibbs:3}, obtained just
replacing $L^2$ norm with Wasserstein distance and $\frac 12
\cE_\gamma(\rho,\rho)$ with the relative Entropy functional
$\RelativeEntropy{\rho\gamma}{\gamma}$ with respect to $\gamma$.
This provides the key connection with the variational theory of
gradient flows in Wasserstein spaces. To this aim, let us first
establish the analogue of the convexity inequality for the relative
Entropy:

\begin{lemma}[Energy inequality]\label{lenergyi}
Let $\rho\in C^1(\R^k)\cap W^{1,2}_\gamma(\R^k)$, with $0<\rho_{\rm
min}\leq\rho\leq\rho_{\rm max}<+\infty$. Then:
\begin{equation}\label{energy}
\RelativeEntropy{\eta\gamma}{\gamma}\geq\RelativeEntropy{\rho\gamma}{\gamma}+
\int_{\R^k}\langle\nabla\rho,\tt-\ii\rangle\,d\gamma, \qquad \forall
\, \eta\gamma\in\ProbabilitiesTwo{\R^k},
\end{equation}
where $\tt$ is the optimal transport map between $\rho\gamma$ and
$\eta\gamma$.
\end{lemma}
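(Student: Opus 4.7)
The plan is to obtain \eqref{energy} by differentiating the relative Entropy along the McCann displacement geodesic joining $\rho\gamma$ to $\eta\gamma$. Since $\gamma=e^{-V}\Leb{k}$ is absolutely continuous with respect to $\Leb{k}$ and $\rho\geq\rho_{\min}>0$, Brenier's theorem provides a unique optimal transport map $\tt$ from $\rho\gamma$ to $\eta\gamma$. Setting $T_t:=(1-t)\ii+t\tt$ and $\mu^t:=(T_t)_\#(\rho\gamma)$, Proposition~\ref{prop:displ_conv} ensures that $t\mapsto \mathcal H(\mu^t|\gamma)$ is convex on $[0,1]$, hence
\[
\mathcal H(\eta\gamma|\gamma)-\mathcal H(\rho\gamma|\gamma)\ \geq\ \liminf_{t\downarrow 0}\frac{\mathcal H(\mu^t|\gamma)-\mathcal H(\rho\gamma|\gamma)}{t},
\]
and \eqref{energy} reduces to bounding this right derivative from below by $\int\langle\nabla\rho,\tt-\ii\rangle\,d\gamma$.

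For the lower bound I would use the tangent-line inequality for the convex function $\varphi(s)=s\log s$: denoting by $\rho_t$ the density of $\mu^t$ with respect to $\gamma$, the pointwise bound $\varphi(\rho_t)-\varphi(\rho)\geq (1+\log\rho)(\rho_t-\rho)$ integrated against $\gamma$ (and combined with $\int(\rho_t-\rho)\,d\gamma=0$) gives
\[
\mathcal H(\mu^t|\gamma)-\mathcal H(\rho\gamma|\gamma)\ \geq\ \int\log\rho\,(\rho_t-\rho)\,d\gamma\ =\ \int\bigl[\log\rho\circ T_t-\log\rho\bigr]\rho\,d\gamma,
\]
the last identity being the push-forward formula applied to the bounded test function $\log\rho$. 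Since $\rho\in C^1(\R^k)$ and $\rho\geq\rho_{\min}$, $\log\rho$ is continuously differentiable with $\nabla\log\rho=\nabla\rho/\rho$, and the fundamental theorem of calculus gives
\[
\frac{\log\rho(T_t(x))-\log\rho(x)}{t}\ =\ \int_0^1\langle\nabla\log\rho(x+ts(\tt(x)-x)),\tt(x)-x\rangle\,ds,
\]
which converges pointwise, as $t\downarrow 0$, to $\langle\nabla\log\rho(x),\tt(x)-x\rangle$. Multiplying by $\rho(x)$ and passing to the limit under the integral recovers the target $\int\langle\nabla\rho,\tt-\ii\rangle\,d\gamma$.

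The main obstacle is justifying that limit passage. The prospective limit is integrable, since $\nabla\rho\in L^2(\gamma)$ by hypothesis and $\tt-\ii\in L^2(\gamma)$ because $\int\|\tt-\ii\|^2\,d(\rho\gamma)=W_2^2(\rho\gamma,\eta\gamma)<+\infty$ and $\rho\geq\rho_{\min}$, so $\int|\langle\nabla\rho,\tt-\ii\rangle|\,d\gamma<\infty$ by Cauchy--Schwarz. However, since $\nabla\log\rho$ need not be globally bounded, a $t$-uniform dominating function for the difference quotient $\rho(x)\,t^{-1}[\log\rho(T_t(x))-\log\rho(x)]$ is not immediate. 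The natural remedy is first to prove \eqref{energy} under the stronger assumption that $\nabla\rho$ is bounded and $\rho$ is compactly supported, where $\log\rho$ is globally Lipschitz and dominated convergence applies directly, and then to extend to the $\rho$ of the statement by a truncation/mollification argument, exploiting the lower semicontinuity of the relative Entropy and the continuity of the optimal transport maps under such approximations.
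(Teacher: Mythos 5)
Your argument is a genuinely different route from the paper's, and an appealingly elementary one. The paper differentiates $\RelativeEntropy{\mu^t}{\gamma}$ via the area formula: it writes the interpolant density through the change of variables $T_t=(1-t)\ii+t\nabla\phi$, arrives at $\frac{d}{dt^+}\RelativeEntropy{\mu^t}{\gamma}\vert_{t=0}=-\int\mathrm{trace}(\nabla^2\phi-I)\,u\,dx+\int\langle\nabla V,\tt-\ii\rangle\rho\,d\gamma$ with $\nabla^2\phi$ the Alexandrov second derivative and $u=\rho e^{-V}$, then exploits the convexity of the Brenier potential $\phi$ (absolutely continuous part of $D\nabla\phi$ dominated by the distributional one) to bound the trace term, and finally integrates by parts. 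Your version instead uses the subgradient inequality for $s\mapsto s\log s$ together with the push-forward identity $\int\log\rho\,\rho_t\,d\gamma=\int(\log\rho\circ T_t)\,\rho\,d\gamma$, reducing everything to differentiating $t\mapsto\int\log\rho\circ T_t\,d(\rho\gamma)$ at $t=0$. This bypasses the area formula, Alexandrov differentiability, and the distributional integration by parts entirely. What the paper's route buys is precisely the thing that gives you trouble: by moving the $t$-derivative onto $\log|\det\nabla T_t|$, the sign of the dangerous term is controlled by the convexity of $\phi$, and no two-sided dominated-convergence argument on $\nabla\log\rho$ is needed.

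The gap you flag is real, and I would insist you cannot wave it away. The difference quotient $\rho\cdot t^{-1}[\log\rho\circ T_t-\log\rho]$ converges pointwise to $\langle\nabla\rho,\tt-\ii\rangle$, and the limit is in $L^1(\gamma)$ by Cauchy--Schwarz, but the hypotheses only give $\nabla\log\rho=\nabla\rho/\rho\in L^2(\gamma)$ (continuous, not pointwise bounded), so there is no obvious uniform integrable dominant, and Fatou does not give a useful one-sided bound either since the quotient can be large and negative. Moreover the repair you sketch -- first prove the inequality when $\nabla\log\rho$ is bounded, then approximate $\rho$ -- is more delicate than it looks: when you mollify or truncate $\rho$, the Brenier map $\tt$ from $\rho_\eps\gamma$ to $\eta\gamma$ changes with $\eps$, so you need both $\nabla\rho_\eps\to\nabla\rho$ in $L^2(\gamma)$ and $\tt_\eps\to\tt$ in $L^2(\rho\gamma)$ in order to pass $\int\langle\nabla\rho_\eps,\tt_\eps-\ii\rangle\,d\gamma$ to the limit, together with convergence of $\RelativeEntropy{\rho_\eps\gamma}{\gamma}$ on the other side. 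None of this is false, but each step needs to be stated and justified, and the stability of optimal maps under $L^2$ perturbation of the source is itself a nontrivial tool. So: correct idea, genuinely different and more elementary in spirit than the paper, but the final limit passage is a real gap that your current sketch only names rather than closes.
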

\begin{proof} We just sketch the proof, referring to
\cite[Lemma~10.4.4, Lemma~10.4.5]{ags}
or to \cite{cmv} for more general results and detailed arguments.
Let $\tt=\nabla\phi$, with $\phi$ convex, and $u=\rho\exp(-V)$.
Defining $\mu^0=\rho\gamma$, $\mu^1=\eta\gamma$ and $\mu^t$ as in \eqref{eq:Gibbs:20},
taking \eqref{eq:Gibbs:19} into account
it suffices to bound from below $\frac{d}{dt^+}\RelativeEntropy{\mu^t}{\gamma}$ at $t=0$
with $\int\langle\nabla\rho,\tt-\ii\rangle\,d\gamma$. Now, a computation based on
the area formula (that provides an explicit expression for the density of $\mu^t$
with respect to $\Leb{k}$, see also the proof of Lemma~\ref{slopeentro} in the Appendix) gives
$$
\frac{d}{dt^+}\RelativeEntropy{\mu^t}{\gamma}\biggr\vert_{t=0}
=-\int_{\R^k}{\rm trace\,}(\nabla^2\phi-I)u(x)\,dx+
\int_{\R^k}\langle\nabla V,\tt-\ii\rangle\,\rho\,d\gamma,
$$
where $\nabla^2\phi$ is the Alexandrov pointwise second derivative
of $\phi$ and $I$ is the identity matrix. By the convexity of $\phi$
we can bound the matrix of absolutely continuous measures
$\nabla^2\phi \, \Leb{k}$ from above with the distributional
derivative of $\nabla\phi=\tt$ (which, in general, is a measure, by
the convexity of $\phi$) to obtain
$$
\frac{d}{dt^+}\RelativeEntropy{\mu^t}{\gamma}\biggr\vert_{t=0}
\geq -\langle\nabla\cdot (\tt-\ii),u\rangle+
\int_{\R^k}\langle\nabla V,\tt-\ii\rangle\,\rho\,d\gamma.
$$
Finally, we integrate by parts to obtain \eqref{energy}: although
$u$ is not compactly supported, this formal integration can be
justified by approximation of $u$ with $u\psi_R$, where $\psi_R\in
C^\infty_c(\R^k)$, $0\leq\psi_R\leq 1$, $\|\nabla\psi_R\|\leq 1$,
$\psi_R\uparrow 1$ and $\nabla\psi_R\to 0$ as $R\to+\infty$.
\end{proof}

\begin{proposition}\label{abs}
For all $\mu_0\in\ProbabilitiesTwo{\R^k}$ the solution $(\mu_t)$ of
the FP equation \eqref{fk}, characterized in
Proposition~\ref{fpunique}, satisfies the family of variational
evolution inequalities
\begin{equation}\label{EVI'}
\frac{d}{dt}\frac{1}{2} W_2^2(\mu_t,\nu)+
\RelativeEntropy{\mu_t}{\gamma}\leq\RelativeEntropy{\nu}{\gamma}
\end{equation}
in the sense of distributions in $]0,+\infty[$, for all
$\nu\in\ProbabilitiesTwo{\R^k}$.
\end{proposition}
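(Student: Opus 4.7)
The plan is to combine the energy inequality from Lemma \ref{lenergyi} with the derivative formula for $t\mapsto \frac{1}{2}W_2^2(\mu_t,\nu)$ along the Fokker-Planck flow, using the identification of $(\mu_t)$ as an absolutely continuous curve in $\ProbabilitiesTwo{\R^k}$. Lemma \ref{lenergyi} plays the role of a Wasserstein subgradient inequality for $\RelativeEntropy{\cdot}{\gamma}$, while rewriting \eqref{fk} as a continuity equation $\partial_t\mu_t+\nabla\cdot(\mu_t v_t)=0$ with $v_t=-\nabla\rho_t/\rho_t$ (when $\mu_t=\rho_t\gamma$) describes how $\mu_t$ moves in the Wasserstein space.

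First I would treat a smooth case: $\mu_0=\rho_0\gamma$ with $\rho_0\in C^1(\R^k)\cap W^{1,2}_\gamma(\R^k)$ satisfying $0<c\leq\rho_0\leq C$, and $\nu=\eta\gamma$ with analogous bounds. By Proposition \ref{prop:Fkpre}, $\mu_t=\rho_t\gamma$ preserves these uniform bounds and the $W^{1,2}_\gamma$ regularity. Invoking the general derivative formula for $W_2^2$ along absolutely continuous curves in the Wasserstein space (Theorem 8.4.7 / Proposition 8.5.4 of \cite{ags}), at almost every $t>0$ one has
\[
\frac{d}{dt}\frac{1}{2}W_2^2(\mu_t,\nu)=\int \langle \ii-\tt_t,v_t\rangle\,d\mu_t = \int \langle \tt_t-\ii,\nabla\rho_t\rangle\,d\gamma,
\]
where $\tt_t$ denotes the (unique, since $\mu_t\ll\Leb{k}$) optimal map from $\mu_t$ to $\nu$. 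Applying Lemma \ref{lenergyi} with $\rho=\rho_t$ bounds this quantity above by $\RelativeEntropy{\nu}{\gamma}-\RelativeEntropy{\mu_t}{\gamma}$, yielding \eqref{EVI'} pointwise a.e.\ and hence in the distributional sense.

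The extension to general $\mu_0,\nu\in \ProbabilitiesTwo{\R^k}$ proceeds by approximation. Assuming $\RelativeEntropy{\nu}{\gamma}<+\infty$ (otherwise \eqref{EVI'} is trivial), I would take regular approximations $\mu_0^n,\nu^n$ of the form above (via convolution with Gaussians, truncation and renormalization) with $\mu_0^n\to\mu_0$ and $\nu^n\to\nu$ in $\ProbabilitiesTwo{\R^k}$ and $\limsup_n\RelativeEntropy{\nu^n}{\gamma}\leq\RelativeEntropy{\nu}{\gamma}$. By the stability in Proposition \ref{fpunique}, $\mu_t^n\to\mu_t$ in $\ProbabilitiesTwo{\R^k}$ for every $t\geq 0$, so $W_2^2(\mu_t^n,\nu^n)\to W_2^2(\mu_t,\nu)$. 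Integrating \eqref{EVI'} between $s$ and $t$ for the approximants gives
\[
\frac{1}{2}W_2^2(\mu_t^n,\nu^n)-\frac{1}{2}W_2^2(\mu_s^n,\nu^n)+\int_s^t\RelativeEntropy{\mu_r^n}{\gamma}\,dr\leq (t-s)\RelativeEntropy{\nu^n}{\gamma},
\]
and one lets $n\to\infty$ using the continuity of $W_2^2$ and Fatou's lemma applied to the time integral (via weak lower semicontinuity of $\RelativeEntropy{\cdot}{\gamma}$) to obtain the integrated EVI for $(\mu,\nu)$, which is equivalent to \eqref{EVI'} in the sense of distributions.

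The main obstacle is justifying the derivative computation in the smooth step: $\tt_t$ varies with $t$ and is defined only $\mu_t$-a.e., so pointwise differentiation of $W_2^2(\mu_t,\nu)$ is not immediate. The rigorous argument requires identifying $(\mu_t)$ as an absolutely continuous curve in $(\ProbabilitiesTwo{\R^k},W_2)$ whose metric derivative is controlled by $\|v_t\|_{L^2(\mu_t)}$, and then applying the chain rule of \cite{ags}; the $W^{1,2}_\gamma$ regularity supplied by Proposition \ref{prop:Fkpre} is precisely what makes this possible, but verifying the hypotheses of the AGS framework in detail requires some care.
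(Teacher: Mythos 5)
Your proposal is mathematically sound and the overall architecture (smooth case via the energy inequality of Lemma~\ref{lenergyi} $+$ derivative of $W_2^2$ along the flow, then a density argument using the stability of Proposition~\ref{fpunique} and lower semicontinuity of the entropy) matches the paper's. Where you genuinely diverge is at the key step: you invoke the abstract chain rule for $t\mapsto W_2^2(\mu_t,\nu)$ along absolutely continuous curves from \cite{ags}, which requires verifying that $(\mu_t)$ is a $W_2$-absolutely continuous curve and that $v_t=-\nabla\rho_t/\rho_t$ is its tangent velocity (i.e.\ lies in the closure of $\nabla C^\infty_c$ in $L^2(\mu_t)$), whereas the paper avoids this machinery entirely. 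It first derives, via the maximum principle, a \emph{pointwise} uniform bound on $\nabla\rho_t$ and hence on $v_t$; this yields an honest ODE flow $Y(t,x)$ with $\mu_t=Y(t,\cdot)_\#\mu_0$, so that a bare-hands difference quotient with the suboptimal coupling $(Y(\bar t+h,\cdot),\tt\circ Y(\bar t,\cdot))_\#\mu_0$ gives the one-sided derivative estimate directly. Thus the paper trades the verification of tangency hypotheses (the concern you yourself flag at the end) for an explicit maximum-principle estimate and a short elementary computation; your route is cleaner modulo citing \cite{ags} but does leave that verification to be done. Two smaller remarks: Lemma~\ref{lenergyi} needs $\rho_t\in C^1$, which for $t>0$ comes from parabolic regularity of the FP equation rather than from the variational $W^{1,2}_\gamma$ bound alone, so be explicit about that; and regularizing $\nu$ is unnecessary, since the energy inequality already holds for every $\eta\gamma\in\ProbabilitiesTwo{\R^k}$ — the paper only regularizes $\mu_0$.
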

\begin{proof}
First of all, we notice that the variational evolution inequalities
\eqref{EVI'} are stable with respect to pointwise convergence in
$\ProbabilitiesTwo{\R^k}$: indeed, if $\mu^n_t\to\mu_t$ for all $t$,
then $\frac{d}{dt}W_2^2(\mu^n_t,\nu)\to\frac{d}{dt}W_2^2(\mu_t,\nu)$
in the sense of distributions, and the lower semicontinuity of
$\RelativeEntropy{\cdot}{\gamma}$ allows to pass to the limit as
$n\to\infty$.

Thanks to this remark and to the stability properties of solutions
to FP equations, we need only to show the property when
$\mu_0=\rho_0\gamma$ with $\rho_0\in C^2_b(\R^k)$,
$\rho_0\geq\rho_{{\rm min}}>0$. Then, we know from
Proposition~\ref{prop:Fkpre} that $\mu_t=\rho_t\gamma$, with
$\rho_t$ smooth, $\rho_t\geq \rho_{{\rm min}}$. In addition, writing
$u_t=\rho_t\exp(-V)$, since $u_0=\rho_0\exp(-V)$ belongs to
$C^2_b(\R^k)$ as well, standard parabolic regularity theory for the
FP equation whose drift is bounded, together will all its
derivatives, gives $\partial_t u,\,\nabla u,\,\nabla^2u\in
C_b\left([0,T]\times\R^k\right)$ for all $T>0$.

We are interested in getting pointwise bounds for the velocity field
$\vv_t:=-(\nabla\rho_t)/\rho_t$; it appears in a natural way in
this problem because, by \eqref{fk}, $\mu_t$ solve the classical
\emph{continuity equation}
$$
\frac{d}{dt}\mu_t+\nabla\cdot (\vv_t\mu_t)=0 \qquad\text{in
$]0,+\infty[\times\R^k$,}
$$
describing the evolution of a time-dependent mass distribution $\mu_t$ under the
action of a velocity field $\vv_t$.
Since $\rho_t=u_t\exp(V)$, and $\exp(V)$ is not bounded above, we can not
use directly the $C^2_b$ bounds on $u_t$. However, we can use the fact that
$\rho_t$ solve the linear PDE $\partial_t\rho_t=\Delta\rho_t- \langle\nabla\rho_t,\nabla V\rangle$,
to obtain
$$
\partial_t\nabla\rho=\Delta\nabla\rho-\nabla^2V\nabla\rho-\nabla^2\rho\nabla V.
$$
Taking the scalar product with $\nabla\rho$ and using the identity
$\Delta\|\nabla\rho\|^2=2\langle\Delta\nabla\rho,\nabla\rho\rangle+2\|\nabla^2\rho\|^2$,
we can neglect the nonnegative terms $\|\nabla^2\rho\|^2$ and $\langle\nabla^2 V\nabla\rho,\nabla\rho\rangle$
to get
$$
\partial_t\|\nabla\rho_t\|^2\leq
\Delta \|\nabla\rho_t\|^2-\langle\nabla V,\nabla\|\nabla\rho_t\|^2\rangle.
$$
The classical maximum principle can now be applied, thanks to the fact that
$\|\nabla\rho_t\|^2$ grows at most exponentially \cite[Page 216]{john} to obtain
$\|\nabla\rho_t\|^2\leq\|\nabla\rho_0\|^2$ for all $t\geq 0$.
By the uniform lower bound on $\rho_t$, $\|\vv_t\|$ is uniformly bounded.

Now, let us show first that $t\mapsto\mu_t$ is a Lipschitz map in $[0,+\infty[$
with values in $\ProbabilitiesTwo{\R^k}$. Let $0\leq t_1\leq t_2<+\infty$;
the smoothness and the just proved boundedness of $\vv_t$ ensure
the existence of a unique flow $Y(t,x):[0,+\infty[\times\R^k\to\R^k$
associated to $\vv_t$, i.e. $Y(0,x)=x$ and $\frac{d}{dt}Y(t,x)=\vv_t(Y(t,x))$
in $[0,+\infty[$. Then, the method of characteristics
(see e.g. \cite[Proposition~8.1.8]{ags}) for solutions to the continuity
equation shows that $\mu_t$ is given by $Y(t,\cdot)_\#\mu_0$ for all
$t\in [0,+\infty[$. Therefore we can use the coupling
$(Y(t_2,\cdot),Y(t_1,\cdot))_\#\mu_0$ to estimate $W_2(\mu_{t_1},\mu_{t_2})$ as follows:
$$
W_2^2(\mu_{t_1},\mu_{t_2})\leq\int_{\R^k}\|Y(t_2,x)-Y(t_1,x)\|^2\,d\mu_0=
\int_{\R^k}\|\int_{t_1}^{t_2} \vv_t(Y(t,x))\,dt\|^2\,d\mu_0
\leq C(t_2-t_1)^2.
$$
This proves that $t\mapsto\mu_t$ is Lipschitz in $[0,+\infty[$.

To conclude the proof, it suffices to check \eqref{EVI'} at any differentiability
point $\bar t\in [0,+\infty[$ of the map $t\mapsto W_2^2(\mu_t,\nu)$. Let $\tt$ be the optimal
transport map between $\mu_{\bar t}$ and $\nu$, and let $h>0$; since
$\Sigma=(Y(\bar t+h,\cdot),\tt\circ Y(\bar t,\cdot))_\#\mu_0$ is a
coupling between $\mu_{\bar t+h}$ and $\nu$, we can estimate
(using the identity $\|a\|^2-\|b\|^2=\langle a+b,a-b\rangle$):
\[
\begin{split}
&\frac{W_2^2(\mu_{\bar t+h},\nu)-W_2^2(\mu_{\bar t},\nu)}{h} \leq
\frac{1}{h}\int_{\R^k}\|\tt(Y(\bar t,x))-Y(\bar t+h,x)\|^2\,d\mu_0-
\frac{1}{h}\int_{\R^k}\|\tt(y)-y\|^2\,d\mu_{\bar t}
\\&= \frac{1}{h}\int_{\R^k}\Big[ \|\tt(Y(\bar t,x))-
Y(\bar t+h,x)\|^2\,- \|\tt(Y(\bar t,x))-Y(\bar t,x)\|^2\Big] d\mu_0\\
&= -\int_{\R^k}\langle 2\tt(Y(\bar t,x))-Y(\bar t+h,x)-Y(\bar t,x),
\frac{1}{h}\left(Y(\bar t+h,x)-Y(\bar t,x) \right) \rangle\,d\mu_0
\\ &\longrightarrow-2\int_{\R^k} \langle\tt(Y(\bar t,x))-Y(\bar t,x),
\vv_{\bar t}(Y(\bar t,x))\rangle\,d\mu_0=
-2\int_{\R^k}\langle\tt(y)-y,\vv_{\bar t}(y)\rangle\,d\mu_{\bar t},
\end{split}
\]
as $h\downarrow 0$, by dominated convergence.
From the energy inequality \eqref{energy} we obtain \eqref{EVI'}.
\end{proof}

Starting from \eqref{fk}, we have derived a new relation
\eqref{EVI'} satisfied by solutions to FP equations, at least when
$H=\R^k$ and $V$ is smooth, with $\nabla V$ bounded and Lipschitz.
The idea of \cite{ags} is to consider \eqref{EVI'} as the
\emph{definition} of a differential equation in a space of
probability measures endowed with the Wasserstein distance even when
either $V$ is less regular or $H$ is infinite-dimensional: more
precisely, $(\mu_t)_{t\geq 0}$ is said to solve the \emph{gradient
flow} of the functional $\RelativeEntropy{\cdot}{\gamma}$; the
choice of $\delta_x$ as initial measure is the one that provides the
link with the laws $\law tx$ of the solution $X_t(x)$ of the SDE
\eqref{gs3}.

Notice that \eqref{EVI'} is defined only in terms of
the Wasserstein distance and the relative entropy,
namely of objects which make perfectly sense on
an arbitrary Hilbert space $H$. Motivated by Proposition~\ref{abs},
we set the following:
\begin{definition}[Gradient flows]\label{defgrfl} $ $ \\
Let $F:\ProbabilitiesTwo{H}\mapsto[0,+\infty]$ and set
$D(F)=\{F<+\infty\}$. We say that a continuous map $\mu_t: \,
]0,+\infty[ \, \mapsto\ProbabilitiesTwo{H}$ is a gradient flow of
$F$ if
\begin{equation}\label{EVI}
\frac{d}{dt}\frac{1}{2}W_2^2(\mu_t,\nu)+F(\mu_t)\leq F(\nu)
\quad\text{in the sense of distributions in $]0,+\infty[$, for all $\nu\in D(F)$.}
\end{equation}
We say that $\mu_t$ starts from $\mu_0$ if $\mu_t\to\mu_0$ in
$\ProbabilitiesTwo{H}$ as $t\downarrow 0$.
\end{definition}
The terminology ``gradient flow'' can be justified, by appealing to
Otto's formal differential calculus on $\ProbabilitiesTwo{\R^k}$;
since this calculus will not play a significant role in our paper we
will not discuss this issue, and refer to \cite{jko}, \cite{ags},
\cite{vil} for much more heuristics on this subject. Here we just
point out that existence of gradient flows can be obtained  (see
Section~\ref{exunst}) by the analogue in the Wasserstein setting of
the Euler scheme for the approximation of gradient flows $\dot
x(t)=-\nabla F(x(t))$: namely, given a time step $\tau>0$, we build
a sequence $(x_k)$ by minimizing
$$
y\mapsto \frac{1}{2\tau}\|y-x_k\|^2+F(y)
$$
recursively (i.e., given $x_k$, we choose $x_{k+1}$ among the minimizers
of the variational problem above). Looking at the discrete Euler equation,
$(x_{k+1}-x_k)/\tau=-\nabla F(x_{k+1})$, it is clear that $x_k\sim x(k\tau)$.

Notice also that \eqref{EVI} implies that
$\mu_t\equiv\mu_0\in\ProbabilitiesTwo{H}$ is a constant gradient
flow if and only if $\mu_0$ minimizes $F$; in the case
$F(\mu)=\RelativeEntropy{\mu}{\gamma}$, since $t\mapsto t\ln t$ is
strictly convex, the unique minimizer of $F$ is $\mu=\gamma$. So,
from the gradient flow viewpoint, we easily see that the unique
invariant measure in $\ProbabilitiesTwo{H}$ is $\gamma$ if
$\gamma\in\ProbabilitiesTwo{H}$: see Theorem \ref{main4} and its
proof in section \ref{process}.

In the next sections we are going to adopt the ``gradient flow'' point of view,
and prove that the results of Theorems \ref{main1}, \ref{main2} and \ref{main3}
are relatively easy consequences of this approach.

\section{Implicit Euler scheme}
\label{scheme}

In this section we construct a discrete approximation
of the gradient flow. Such construction is based on
the following convexity property, which is a stronger
version of the one given in Proposition~\ref{prop:displ_conv}.

\begin{definition}
We say that $F:\ProbabilitiesTwo{H}\to [0,+\infty]$ is
strongly displacement convex if for any
$\bar\mu,\nu_0,\nu_1\in\ProbabilitiesTwo{H}$
there exists a continuous curve $\nu:[0,1]\mapsto\ProbabilitiesTwo{H}$
such that $\nu_{\vert t=i}=\nu_i$, $i=0,1$, such that:
\begin{equation}\label{convdist'}
\begin{cases}
\text{$W_2^2(\nu_t,\bar\mu)\leq (1-t) W_2^2(\nu_0,\bar\mu)+t W_2^2(\nu_1,\bar\mu)
-t(1-t)W_2^2(\nu_0,\nu_1)$}&\\
\text{$F(\nu_t)\leq (1-t)F(\nu_0)+tF(\nu_1)$}&
\end{cases}
\qquad\forall t\in [0,1].
\end{equation}
\end{definition}
In this paper we consider only the case
$F(\mu):=\RelativeEntropy{\mu}{\gamma}$, where $\gamma$ is a
log-concave probability measure on $H$. In fact the following
results are true for much more general classes of strongly
displacement convex functionals $F:\ProbabilitiesTwo{H}\to
(-\infty,+\infty]$ with essentially the same proof (under suitable
lower semicontinuity and coercivity assumptions). However at one
point we shall take advantage of a particular feature of the
relative entropy functional $\RelativeEntropy{\cdot}{\gamma}$,
namely the entropy inequality \eqref{entrine}, in order to simplify
the proof. For more general cases, see Chapter~4 in \cite{ags}.

In order to build gradient flows, we use an implicit Euler scheme, at
least when $\bar\mu\in D(F)$; then, suitable Cauchy-type estimates provide
existence up to initial data in $\overline{D(F)}$, as in the Hilbertian
theory. The scheme can be described as follows: given a time step
$\tau>0$, we define a ``discrete'' solution $\mu^k_\tau$ setting
$\mu^0_\tau=\bar\mu$ and, given $\mu^k_\tau$, we choose $\mu^{k+1}_\tau$
as the unique minimizer of
\begin{equation}\label{euler1}
\nu \ \mapsto \
\RelativeEntropy{\nu}{\gamma}+\frac{1}{2\tau}W_2^2(\nu,\mu^k_\tau).
\end{equation}
The fact that this is possible is a consequence of the following
\begin{proposition}\label{exdi}
Let $\tau>0$.
For all $\mu\in\ProbabilitiesTwo{H}$ there exists
a unique $\mu_\tau\in\ProbabilitiesTwo{H}$ such that:
\begin{equation}\label{euler15}
\RelativeEntropy{\mu_\tau}{\gamma}+\frac{1}{2\tau}W_2^2(\mu_\tau,\mu) \, \leq \,
\RelativeEntropy{\nu}{\gamma}+\frac{1}{2\tau}W_2^2(\nu,\mu)\qquad
\forall \ \nu\in\ProbabilitiesTwo{H}.
\end{equation}
\end{proposition}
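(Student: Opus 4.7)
Writing $F_\tau(\nu):=\RelativeEntropy{\nu}{\gamma}+\frac{1}{2\tau}W_2^2(\nu,\mu)$, I would prove existence by the direct method of the Calculus of Variations, and uniqueness by exploiting the displacement convexity of $\RelativeEntropy{\cdot}{\gamma}$ along a generalized geodesic connecting two candidate minimizers.

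\textbf{Existence.} First I would check $\inf F_\tau<+\infty$: for $R>0$ large enough that the ball $B_R:=\{x\in H:\|x\|\le R\}$ has $\gamma(B_R)>0$, the measure $\nu_0:=\gamma(B_R)^{-1}\gamma\restr{B_R}$ has bounded support and $\RelativeEntropy{\nu_0}{\gamma}=-\log\gamma(B_R)<+\infty$, hence $F_\tau(\nu_0)<+\infty$. Setting $m:=\inf F_\tau$ and taking a minimizing sequence $(\nu_n)$ with $F_\tau(\nu_n)\le m+1$, the bound $\RelativeEntropy{\nu_n}{\gamma}\ge 0$ gives $W_2^2(\nu_n,\mu)\le 2\tau(m+1)$, and since $\mu\in\ProbabilitiesTwo H$ this yields uniform control on $\int\|x\|^2\,d\nu_n$. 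Tightness would come from applying Jensen's inequality to $s\log s$ restricted to an arbitrary Borel set $A$, combined with $s\log s\ge -e^{-1}$, to obtain
\begin{equation*}
\nu_n(A)\log\frac{\nu_n(A)}{\gamma(A)}\le\RelativeEntropy{\nu_n}{\gamma}+e^{-1}\le m+1+e^{-1}.
\end{equation*}
Since $\gamma$ is tight on the Polish space $H$, choosing compacts $K_j$ with $\gamma(H\setminus K_j)\downarrow 0$ and applying the above to $A=H\setminus K_j$ forces $\sup_n\nu_n(H\setminus K_j)\to 0$. Together with the uniform second-moment bound and the characterization \eqref{cami2}, a subsequence converges to some $\mu_\tau\in\ProbabilitiesTwo H$ in $(\ProbabilitiesTwo H,W_2)$. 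Continuity of $W_2^2(\cdot,\mu)$ on $\ProbabilitiesTwo H$ and weak lower semicontinuity of $\RelativeEntropy{\cdot}{\gamma}$ then yield $F_\tau(\mu_\tau)\le\liminf_n F_\tau(\nu_n)=m$, so $\mu_\tau$ is a minimizer.

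\textbf{Uniqueness.} Suppose $\mu_\tau^0,\mu_\tau^1$ both attain $m$. I would pick optimal couplings $\Sigma^i\in\Gamma_o(\mu_\tau^i,\mu)$ for $i=0,1$ and glue them along their common $\mu$-marginal into $P\in\Probabilities{H\times H\times H}$ whose $(x,z)$- and $(y,z)$-projections are $\Sigma^0$ and $\Sigma^1$. Letting $\Sigma'\in\Gamma(\mu_\tau^0,\mu_\tau^1)$ be its $(x,y)$-projection, set $\nu_t:=\bigl((1-t)\pi^0+t\pi^1\bigr)_\#\Sigma'$. Pushing $P$ forward by $(x,y,z)\mapsto((1-t)x+ty,z)$ produces a coupling of $\nu_t$ with $\mu$; the parallelogram identity $\|(1-t)x+ty-z\|^2=(1-t)\|x-z\|^2+t\|y-z\|^2-t(1-t)\|x-y\|^2$, integration against $P$, and the bound $\int\|x-y\|^2\,d\Sigma'\ge W_2^2(\mu_\tau^0,\mu_\tau^1)$ give
\begin{equation*}
W_2^2(\nu_t,\mu)\le(1-t)W_2^2(\mu_\tau^0,\mu)+tW_2^2(\mu_\tau^1,\mu)-t(1-t)W_2^2(\mu_\tau^0,\mu_\tau^1).
\end{equation*}
Log-concavity of $\gamma$ ensures that $\RelativeEntropy{\cdot}{\gamma}$ is still convex along the interpolation $(\nu_t)$, even though $\Sigma'$ need not be optimal (this is the extension of Proposition~\ref{prop:displ_conv} to generalized geodesics, which is the special feature of the relative entropy alluded to before Definition~\ref{defgrfl}). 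Adding the two inequalities yields
\begin{equation*}
F_\tau(\nu_t)\le m-\frac{t(1-t)}{2\tau}W_2^2(\mu_\tau^0,\mu_\tau^1),
\end{equation*}
and $F_\tau(\nu_t)\ge m$ forces $W_2(\mu_\tau^0,\mu_\tau^1)=0$, i.e.\ $\mu_\tau^0=\mu_\tau^1$.

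\textbf{Main obstacle.} The delicate step is tightness of the minimizing sequence in infinite dimensions, where uniform second-moment bounds are insufficient: one must exploit the entropy bound through the Csisz\'ar-type inequality above and the tightness of $\gamma$ on the Polish space $H$.
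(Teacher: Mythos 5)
Your overall strategy (direct method; finiteness of the infimum via a truncated $\gamma$; tightness via the Csisz\'ar-type entropy inequality) matches the paper's, but there is a genuine gap in how you extract the limit. You conclude that a subsequence converges to $\mu_\tau$ \emph{in} $(\ProbabilitiesTwo H,W_2)$ ``together with the uniform second-moment bound and the characterization \eqref{cami2}''. A uniform bound $\sup_n\int\|x\|^2\,d\nu_n<\infty$ does \emph{not} imply the uniform integrability of the second moments required by \eqref{cami2}; for instance $\nu_n=(1-n^{-2})\delta_0+n^{-2}\delta_{n e_1}$ has constant second moment, converges weakly to $\delta_0$, yet fails \eqref{cami2}. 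Therefore you only obtain weak convergence of the minimizing subsequence, and the appeal to ``continuity of $W_2^2(\cdot,\mu)$'' is not available. The repair is exactly what the paper does: pass to optimal plans $\Sigma_k\in\Gamma_o(\nu_k,\mu)$, extract a weak limit $\Sigma\in\Gamma(\mu_\tau,\mu)$, and use lower semicontinuity of $\int\|x-y\|^2\,d\Sigma$ under weak convergence of plans to show that $W_2^2(\cdot,\mu)$ is weakly sequentially lower semicontinuous. With this fix the existence argument is correct.

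\textbf{Uniqueness.} Your argument is correct and is a mild variant of the paper's. The paper proves a separate lemma (``strong displacement convexity'' of $\RelativeEntropy{\cdot}{\gamma}$, Proposition~\ref{convdist}) by building the interpolating curve from optimal \emph{maps} $\rr_0,\rr_1:\bar\mu\to\nu_0,\nu_1$ on a dense class of base points $\bar\mu$ and passing to the limit, and then invokes that lemma in the proof of Proposition~\ref{exdi}. You instead build the curve directly by gluing optimal \emph{plans} $\Sigma^0,\Sigma^1$ along their common $\mu$-marginal; the parallelogram identity then gives the same quantitative convexity for $W_2^2(\cdot,\mu)$. The step that needs care is the convexity of $\RelativeEntropy{\cdot}{\gamma}$ along your glued curve: this is precisely convexity of the relative entropy along generalized geodesics in the sense of \cite[Thm.~9.4.12]{ags}, which is available when $\gamma$ is log-concave. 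This is a real result, but note that the paper's remark before Definition~\ref{defgrfl} about ``a particular feature of the relative entropy'' refers to the entropy inequality \eqref{entrine} (used for tightness), not to generalized geodesic convexity, so your attribution is slightly off. Substantively, though, your gluing-based construction and the paper's map-based construction buy the same thing; yours avoids the density argument on base measures, the paper's avoids appealing explicitly to gluing.
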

The existence part of this result is rather standard and relies on
tightness and lower semicontinuity arguments. The uniqueness
statement is based on the strong displacement convexity of the
relative Entropy functional, proved in Proposition~\ref{convdist}.
\begin{proof} \emph{Existence}. Let $\nu_k=f_k\gamma\in\ProbabilitiesTwo{H}$
be such that:
\[
\lim_{k\to\infty}
\left[ F(\nu_k)+\frac{1}{2\tau}W_2^2(\nu_k,\mu) \right]
 = \ \inf_{\nu\in\ProbabilitiesTwo{H}}\left\{F(\nu)+\frac{1}{2\tau}W_2^2(\nu,\mu)
\right\} < \, +\infty.
\]
In particular we have that $(\nu_k)$ is bounded
in $\ProbabilitiesTwo{H}$ and
$\limsup_k\RelativeEntropy{\nu_k}{\gamma}<\infty$.
By using first the inequality $t\ln t\geq -e^{-1}$ and then
Jensen inequality we get
\begin{equation}\label{entrine}
\frac{1}{e}\gamma(H\setminus E)+\RelativeEntropy{\nu_k}{\gamma}\geq
\int_E f_k\ln f_k\,d\gamma\geq\nu_k(E)\ln\frac{\nu_k(E)}{\gamma(E)}
\qquad \forall \ E\in\BorelSets{H},
\end{equation}
so that $\gamma(E)\to 0$ implies $\sup_k\nu_k(E)\to 0$. It
follows that $(\nu_k)$ is tight in $H$, so that we can extract
a subsequence, that we can still denote by $(\nu_k)$, converging weakly
to some $\mu_\tau\in\ProbabilitiesTwo{H}$.

If we prove that both $F$ and $W_2^2(\cdot,\mu)$ are lower
semicontinuous with respect to weak convergence, then we have that
$\mu_\tau$ realizes the minimum in \eqref{euler15}: indeed by lower
semicontinuity:
\[
F(\mu_\tau)+\frac{1}{2\tau}W_2^2(\mu_\tau,\mu) \, \leq \,
\liminf_{k\to\infty}\left[ F(\nu_k)+\frac{1}{2\tau}W_2^2(\nu_k,\mu) \right]
= \inf_{\nu\in\ProbabilitiesTwo{H}}\left\{F(\nu)+\frac{1}{2\tau}W_2^2(\nu,\mu)
\right\}.
\]
A nice representation of the relative entropy functional is provided by
the \emph{duality formula} (see for instance Lemma~9.4.4 of \cite{ags}):
\begin{equation}\label{duality}
\RelativeEntropy{\mu}{\gamma}=\sup\left\{\int_H S\,d\mu-\int_H (e^S-1)\,d\gamma:\
 S\in C_b(H)\right\}.
\end{equation}
This formula immediately implies that
$\RelativeEntropy{\cdot}{\gamma}$ is sequentially lower
semicontinuous with respect to the weak convergence.

Let now $\Sigma_k\in\Gamma_o(\nu_k,\mu)$ and assume with no
loss of generality that $W_2(\nu_k,\mu)$ converges to some limit;
since $(\nu_k)$ is tight in $H$, $(\Sigma_k)$ is tight in
$H\times H$ and we can assume that $(\Sigma_k)$ converges
weakly to $\Sigma\in\Probabilities{H\times H}$.
Obviously $\Sigma\in\Gamma(\nu,\mu)$ and the weak
convergence of $\Sigma_n$ gives
\begin{equation}\label{lsc}
\int_{H\times H}\|x-y\|^2\,d\Sigma\leq\liminf_{k\to\infty}\int_{H\times H}
\|x-y\|^2\,d\Sigma_k.
\end{equation}
Bounding $W_2^2(\mu,\nu)$ from above with $\int\|x-y\|^2\,d\Sigma$
we obtain the lower semicontinuity property. A similar argument also
proves the \emph{joint} lower semicontinuity of $(\mu,\nu)\mapsto
W_2(\mu,\nu)$ (we will use this fact at the end of the proof of
Proposition~\ref{convdist}).

\medskip\emph{Uniqueness.}
Suppose that $\tilde \mu_\tau\neq\mu_\tau$ realize the minimum in \eqref{euler15},
denoted by $m$. Let $\nu_t$ be a curve between $\tilde\mu_\tau$ and $\mu_\tau$
given by Proposition \ref{convdist} below, with the choice $\bar\mu=\mu$.
Then we obtain:
\[
F(\nu_{1/2})+\frac{1}{2\tau}W_2^2(\nu_{1/2},\mu) \, \leq
m -\frac{1}{4}\, W^2_2(\tilde\mu_\tau,\mu_\tau) \, < \, m,
\]
which is a contradiction.
\end{proof}

\begin{proposition}\label{convdist}
Let $\gamma\in\Probabilities{H}$ be log-concave.
Then the functional $\RelativeEntropy{\cdot}{\gamma}:\ProbabilitiesTwo{H}\to [0,+\infty]$ is
strongly displacement convex.
\end{proposition}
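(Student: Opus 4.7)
The plan is to construct the curve $\nu_t$ as a \emph{generalized geodesic} anchored at $\bar\mu$, and then check the two inequalities in \eqref{convdist'} separately. First I would pick optimal couplings $\Sigma^0\in\Gamma_o(\bar\mu,\nu_0)$ and $\Sigma^1\in\Gamma_o(\bar\mu,\nu_1)$ and glue them along the common marginal $\bar\mu$ via the standard gluing lemma: this produces $\Pi\in\Probabilities{H^3}$ with $\pi^{0,1}_{\#}\Pi=\Sigma^0$ and $\pi^{0,2}_{\#}\Pi=\Sigma^1$, where $\pi^{i,j}$ are the natural projections from $H^3$ onto the $(i,j)$ factors. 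I would then set
\[
\nu_t:=\big((1-t)\pi^1+t\,\pi^2\big)_{\#}\Pi,\qquad t\in[0,1],
\]
so that $\nu_t\!\mid_{t=i}=\nu_i$; the continuity of $t\mapsto\nu_t$ in $\ProbabilitiesTwo{H}$ follows from dominated convergence applied to the coupling $\big((1-s)\pi^1+s\pi^2,\,(1-t)\pi^1+t\pi^2\big)_{\#}\Pi$.

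For the first inequality in \eqref{convdist'}, I would bound $W_2^2(\nu_t,\bar\mu)$ by the cost of the coupling $\big(\pi^0,(1-t)\pi^1+t\pi^2\big)_{\#}\Pi\in\Gamma(\bar\mu,\nu_t)$ and expand using the identity $\|(1-t)a+tb\|^2=(1-t)\|a\|^2+t\|b\|^2-t(1-t)\|a-b\|^2$ with $a=\pi^1-\pi^0$ and $b=\pi^2-\pi^0$. The optimality of $\Sigma^0,\Sigma^1$ turns the first two terms into $(1-t)W_2^2(\nu_0,\bar\mu)+tW_2^2(\nu_1,\bar\mu)$, and the cross term $\int\|\pi^1-\pi^2\|^2\,d\Pi$ majorises $W_2^2(\nu_0,\nu_1)$, since $\pi^{1,2}_{\#}\Pi\in\Gamma(\nu_0,\nu_1)$. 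This delivers the generalized convexity of $W_2^2(\cdot,\bar\mu)$.

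For the second inequality I need convexity of $\RelativeEntropy{\cdot}{\gamma}$ along the specific curve $\nu_t$ produced by the 3-plan $\Pi$. The main obstacle is that $\pi^{1,2}_{\#}\Pi$ need not be optimal between $\nu_0$ and $\nu_1$, so Proposition~\ref{prop:displ_conv} does not apply verbatim; one needs its strengthening to generalized geodesics. I would obtain it by rerunning the argument of Proposition~\ref{prop:displ_conv} along $\nu_t$: in the finite-dimensional case, assume first $\bar\mu,\nu_0,\nu_1$ are absolutely continuous, use Brenier's maps $\mathbf{t}^i=\nabla\phi^i$ from $\bar\mu$ to $\nu_i$ so that $\nu_t=\big((1-t)\mathbf{t}^0+t\,\mathbf{t}^1\big)_{\#}\bar\mu$, compute the density of $\nu_t$ via the area formula, and combine the concavity of $A\mapsto(\det A)^{1/k}$ on symmetric positive semidefinite matrices (applied to $\nabla^2\phi^0,\nabla^2\phi^1$) with the log-concavity of $\gamma$ in the form \eqref{repgamma} given by Proposition~\ref{charalog}. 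The absolute-continuity hypothesis on the three base measures is removed by approximation and the lower semicontinuity of $\RelativeEntropy{\cdot}{\gamma}$ coming from the duality formula \eqref{duality}; the extension to infinite-dimensional $H$ is obtained by finite-dimensional cylindrical projections, using that projections of log-concave measures remain log-concave (again Proposition~\ref{charalog}) and that $\RelativeEntropy{\cdot}{\gamma}$ is the monotone limit of its finite-dimensional conditional versions.

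Combining Steps~2 and~3 yields both inequalities in \eqref{convdist'} for the same curve $\nu_t$, proving the strong displacement convexity of $\RelativeEntropy{\cdot}{\gamma}$.
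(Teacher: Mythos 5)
Your argument is correct and follows the same essential mathematics as the paper's, but by a structurally different route. You construct the interpolating curve directly, for arbitrary $\bar\mu\in\ProbabilitiesTwo{H}$, as a \emph{generalized geodesic}: glue optimal plans $\Sigma^0\in\Gamma_o(\bar\mu,\nu_0)$, $\Sigma^1\in\Gamma_o(\bar\mu,\nu_1)$ along the common marginal $\bar\mu$ to get a $3$-plan $\Pi$ and set $\nu_t=((1-t)\pi^1+t\pi^2)_\#\Pi$. The paper instead first restricts $\bar\mu$ to a dense class $\mathscr D$ (finite convex combinations of non-degenerate Gaussians), where it can use honest transport maps $\rr_0,\rr_1$ and set $\nu_t=((1-t)\rr_0+t\rr_1)_\#\bar\mu$, then extends to general $\bar\mu$ by a tightness/diagonal/approximation argument. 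When $\bar\mu\in\mathscr D$ the paper's curve is a special case of yours, so yours is the more general construction and avoids the approximation step at the level of \emph{defining} the curve. Both proofs handle the $W_2^2$-convexity by the same algebraic expansion of $\|(1-t)a+tb\|^2$ plus the suboptimality of the coupling $\pi^{1,2}_\#\Pi$; that part matches exactly. For the entropy convexity along the curve, the paper simply cites \cite[Theorem~9.4.11]{ags}, while you re-derive it via Brenier maps, the area formula, concavity of $A\mapsto(\det A)^{1/k}$, the representation of Proposition~\ref{charalog}, and an approximation / finite-dimensional projection argument — this is the same AGS-style proof the citation points to.

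One point worth making explicit: in your final approximation step, when you replace $\bar\mu,\nu_0,\nu_1$ by absolutely continuous approximants and pass to the limit, the limiting curve need not coincide with the specific curve $\nu_t$ you built in Step~1 — gluings and optimal plans are generally non-unique. This does not damage the proof, because the definition of strong displacement convexity only asks for the \emph{existence} of a curve along which both inequalities hold, and both inequalities are preserved in the limit (the entropy one by \eqref{duality}-based lower semicontinuity, the $W_2^2$ one by joint lower semicontinuity of the Wasserstein distance, which is noted in the paper just before the uniqueness step of Proposition~\ref{exdi}). Stating this explicitly, as the paper in effect does by running the whole argument on a single approximating family, would tighten the write-up; as stated it reads as if the entropy inequality were established along the particular Step~1 curve, which is not what the approximation gives.
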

\begin{proof}
As shown in \cite{ags}, it is often enough to build the
interpolating curves only for a dense subset $\mathscr D$ of
measures $\bar\mu$. In the case of the relative entropy functional
(but also for more general classes of functionals, see \cite{ags})
the set $\mathscr D$ is made by finite convex combinations of
non-degenerate Gaussian measures; as $\overline{\mathscr D}$ is
easily seen to contain finite convex combinations of Dirac masses,
$\mathscr D$ is dense in $\ProbabilitiesTwo{H}$. Moreover, any
measure in $\mathscr D$ vanishes on the class $\GBorelSets{H}$ of
Gaussian null sets; hence, for any $\bar\mu\in\mathscr D$ and
$\nu_0,\,\nu_1\in\ProbabilitiesTwo{H}$ we can find optimal transport
maps $\rr_i$ between $\bar\mu$ and $\nu_i$, $i=0,1$. We define
\begin{equation}\label{definterpola}
\nu_t:=\left((1-t)\rr_0+t\rr_1\right)_\#\mu.
\end{equation}
Let us first check the Lipschitz continuity of
$t\mapsto\nu_t\in\ProbabilitiesTwo{H}$: for $s,\,t\in [0,1]$, the
coupling
$$
\Sigma_{st}:=\left((1-s)\rr_0+s\rr_1,(1-t)\rr_0+t\rr_1\right)_\#\bar\mu
$$
belongs to $\Gamma(\nu_s,\nu_t)$, so that
\begin{eqnarray*}
W_2^2(\nu_s,\nu_t)&\leq&\int_{H\times H}\|x-y\|^2\,d\Sigma_{st}=
|t-s|^2\int_H\|\rr_1-\rr_0\|^2\,d\bar\mu\\
&\leq&2|t-s|^2 \left(W_2^2(\nu_1,\bar\mu)+W_2^2(\nu_0,\bar\mu)\right).
\end{eqnarray*}
Now, let us check the convexity of $t\mapsto W_2^2(\nu_t,\bar\mu)$:
\begin{eqnarray*}
W_2^2(\nu_t,\bar\mu)&\leq&
\int_H \|(1-t)(\rr_0-\ii)+t(\rr_1-\ii)\|^2\,d\bar\mu
\\
&=& (1-t)\int_H\|\rr_0-\ii\|^2\,d\bar\mu
+t\int_H\|\rr_1-\ii\|^2\,d\bar\mu
-t(1-t)\int_H\|\rr_0-\rr_1\|^2\,d\bar\mu
\\ &\leq& (1-t)W_2^2(\nu_0,\bar\mu)+t
W_2^2(\nu_1,\bar\mu)-t(1-t)W_2^2(\nu_0,\nu_1).
\end{eqnarray*}
In the last inequality we used the fact that $(\rr_0,\rr_1)_\#\bar\mu\in\Gamma(\nu_0,\nu_1)$.
For the convexity of $t\mapsto F(\nu_t)$, achieved through a finite-dimensional
approximation, we refer to \cite[Theorem 9.4.11]{ags}.

Having built the interpolating curves when $\bar\mu\in\mathscr D$,
in the general case, we can approximate any
$\bar\mu\in\ProbabilitiesTwo{H}$ by measures $\bar\mu^n\in\mathscr
D$; notice that the interpolating curves $t\mapsto\nu^n_t$ between
$\nu_0$ and $\nu_1$ are equi-Lipschitz and, for $t$ fixed, the same
tightness argument used in the existence part of
Proposition~\ref{exdi} shows that $(\nu^n_t)$ is tight. Therefore,
thanks to a diagonal argument, we can assume that $\nu^n_t\to\nu_t$
weakly for all $t\in [0,1]\cap\Q$, with $t\mapsto\nu_t$ Lipschitz in
$[0,1]\cap\Q$. Passing to the limit as $n\to\infty$ in the convexity
inequalities relative to $\nu^n_t$, and using the weak lower
semicontinuity of $F(\cdot)$ and $W_2^2(\cdot,\cdot)$ we get
\begin{equation}
\begin{cases}
\text{$W_2^2(\nu_t,\bar\mu)\leq
(1-t) W_2^2(\nu_0,\bar\mu)+t W_2^2(\nu_1,\bar\mu)
-t(1-t)W_2^2(\nu_0,\nu_1)$}&\\
\text{$F(\nu_t)\leq (1-t)F(\nu_0)+tF(\nu_1)$}&
\end{cases}
\end{equation}
for all $t\in [0,1]\cap\Q$. By a density argument, based on the
completeness of $\ProbabilitiesTwo{H}$, we can obtain a Lipschitz
curve $\nu_t$ defined in the whole of $[0,1]$, still retaining the
inequalities above.
\end{proof}

\noindent We prove now an important estimate which plays a key role
in the sequel, see the proof of Theorem \ref{tgflow} below.

\begin{proposition}\label{4.4}
Let $F:\ProbabilitiesTwo{H}\to [0,+\infty]$ be
strongly displacement convex, let $\bar\mu\in\ProbabilitiesTwo{H}$
and let $\mu_\tau$ be a minimizer of
$$
\mu\mapsto F(\mu)+\frac{1}{2\tau}W_2^2(\mu,\bar\mu).
$$
Then
\begin{equation}\label{discrevi}
W_2^2(\mu_\tau,\nu)-W_2^2(\bar\mu,\nu) \, \leq \, 2 \, \tau \,
[F(\nu)-F(\mu_\tau)], \qquad\forall \,\nu\in D(F).
\end{equation}
\end{proposition}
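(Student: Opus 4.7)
The plan is to exploit the strong displacement convexity of $F$ with the base point $\bar\mu$ chosen to coincide with the one in the penalization, and then combine with the minimality of $\mu_\tau$. This is the Wasserstein analogue of the standard trick used in the Hilbertian Euler scheme to pass from a single minimality inequality to a full \emph{evolution variational inequality}.

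First I would apply Definition~4.1 with base point $\bar\mu$, $\nu_0:=\mu_\tau$ and $\nu_1:=\nu$, obtaining a curve $t\mapsto\nu_t$ for which both displacement inequalities
\begin{equation*}
W_2^2(\nu_t,\bar\mu)\leq (1-t)W_2^2(\mu_\tau,\bar\mu)+tW_2^2(\nu,\bar\mu)-t(1-t)W_2^2(\mu_\tau,\nu),
\qquad F(\nu_t)\leq(1-t)F(\mu_\tau)+tF(\nu)
\end{equation*}
hold. Note that $F(\mu_\tau)<+\infty$: since $\nu\in D(F)$ the value at any competitor is finite, and minimality forces $F(\mu_\tau)+\frac{1}{2\tau}W_2^2(\mu_\tau,\bar\mu)\leq F(\nu)+\frac{1}{2\tau}W_2^2(\nu,\bar\mu)<+\infty$, so the displacement convexity bound on $F(\nu_t)$ is meaningful.

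Next I would insert $\nu_t$ into the minimality inequality
\begin{equation*}
F(\mu_\tau)+\tfrac{1}{2\tau}W_2^2(\mu_\tau,\bar\mu)\leq F(\nu_t)+\tfrac{1}{2\tau}W_2^2(\nu_t,\bar\mu),\qquad t\in(0,1],
\end{equation*}
substitute the two convexity bounds on the right-hand side, cancel the common terms, and divide by $t>0$. After elementary algebra this yields
\begin{equation*}
F(\mu_\tau)+\tfrac{1}{2\tau}W_2^2(\mu_\tau,\bar\mu)+\tfrac{1-t}{2\tau}W_2^2(\mu_\tau,\nu)\leq F(\nu)+\tfrac{1}{2\tau}W_2^2(\nu,\bar\mu).
\end{equation*}
Letting $t\downarrow 0$ gives the stronger "one-step EVI"
\begin{equation*}
W_2^2(\mu_\tau,\nu)+W_2^2(\mu_\tau,\bar\mu)+2\tau F(\mu_\tau)\leq W_2^2(\nu,\bar\mu)+2\tau F(\nu),
\end{equation*}
from which \eqref{discrevi} follows at once by dropping the nonnegative term $W_2^2(\mu_\tau,\bar\mu)$.

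There is essentially no analytic obstacle here: the entire argument is an algebraic manipulation that is made possible precisely by the simultaneous quadratic convexity of $W_2^2(\cdot,\bar\mu)$ and the ordinary convexity of $F$ along the \emph{same} interpolating curve — which is the whole point of requiring strong (as opposed to merely displacement) convexity. The only mild subtlety is checking that one is allowed to use the interpolation inequalities, which reduces to ensuring $\mu_\tau\in D(F)$; this is immediate from minimality. I would not expect the passage $t\downarrow0$ to cause trouble since the inequality is purely scalar and every term is bounded independently of $t$.
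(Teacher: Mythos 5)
Your proof is correct and is essentially identical to the paper's argument: apply strong displacement convexity with base point $\bar\mu$ along the interpolating curve from $\mu_\tau$ to $\nu$, plug into the minimality inequality, divide by $t$, let $t\downarrow 0$, and discard the nonnegative term $W_2^2(\mu_\tau,\bar\mu)$. The only (welcome) extra is the explicit check that $F(\mu_\tau)<+\infty$, which the paper leaves tacit.
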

\begin{proof} Let $\nu_0=\mu_\tau$, $\nu_1=\nu$ and consider the interpolating
curve $\nu_t:[0,1]\to\ProbabilitiesTwo{H}$ along which
\eqref{convdist'} holds. The minimality of $\mu_\tau$ and \eqref{convdist'} give
\begin{eqnarray*}
& & F(\mu_\tau)+\frac{1}{2\tau} \, W_2^2(\mu_\tau,\bar\mu) \leq
F(\nu_t)+\frac{1}{2\tau} \, W_2^2(\nu_t,\bar\mu)
\\ & & \leq
(1-t)\left[F(\mu_\tau)+\frac{1}{2\tau} \, W_2^2(\mu_\tau,\bar\mu)\right]+
t\left[F(\nu)+\frac{1}{2\tau} \, W_2^2(\nu,\bar\mu)\right] -
\, t(1-t) \, \frac{1}{2\tau} \, W_2^2(\mu_\tau,\nu).
\end{eqnarray*}
Subtracting $F(\mu_\tau)+W_2^2(\mu_\tau,\bar\mu)/2\tau$ from
the left hand side of the first inequality and from the right hand side
of the second inequality, and dividing by $t>0$ we obtain:
\begin{eqnarray*}
F(\nu) - F(\mu_\tau) & \geq &
\frac{1}{2\tau} \, W_2^2(\mu_\tau,\bar\mu) \, - \,
\frac{1}{2\tau} \, W_2^2(\nu,\bar\mu) \, + \,
\frac{1-t}{2\tau} \, W_2^2(\mu_\tau,\nu)
\\ & \geq &
\frac{1}{2\tau} \left[ (1-t) \,  W_2^2(\mu_\tau,\nu) \, - \,
W_2^2(\nu,\bar\mu) \right].
\end{eqnarray*}
Letting $t\downarrow 0$ we have
\begin{equation}
  \label{discrevi2}
  W_2^2(\mu_\tau,\nu)- W_2^2(\nu,\bar\mu)+ W_2^2(\mu_\tau,\bar\mu)
   \le 2\tau\Big(F(\nu) - F(\mu_\tau)\Big),
\end{equation}
which yields \eqref{discrevi} by neglecting the
nonnegative term $W_2^2(\mu_\tau,\bar\mu)$.
\end{proof}

\section{Existence and uniqueness of gradient flows}
\label{exunst}

In this section we prove existence and uniqueness
of gradient flows and convergence of the approximations
\eqref{euler1}. Again the results of this section
hold for more general classes of strongly displacement convex functionals,
but we are only interested here in the case
$F(\cdot)=\RelativeEntropy{\cdot}{\gamma}$,
where we consider a fixed log-concave probability
measure $\gamma$ on $H$.

We go back to the sequence $(\mu^k_\tau)_k$ defined recursively
by \eqref{euler1} with $\mu^0_\tau=\overline\mu\in\ProbabilitiesTwo{H}$,
the existence of $(\mu^k_\tau)_k$ being granted by Proposition \ref{exdi}.
We shall denote the ``discrete'' semigroup induced by $\mu^k_\tau$ by
$\Semit{\bar\mu}{t}{\tau}$, precisely
\begin{equation}\label{recu}
\Semit{\bar\mu}{t}{\tau}:=\mu^{k+1}_\tau\qquad\forall t\in (k\tau,(k+1)\tau].
\end{equation}

\begin{theorem}[Existence and uniqueness of gradient flows]\label{tgflow}
For any $\bar\mu\in \ProbabilitiesTwo{K}$
there exists a unique gradient flow starting from $\bar\mu$. The induced
semigroup $\Semi{\bar\mu}{t}$ satisfies
\begin{equation}\label{sat1}
W_2(\Semi{\bar\mu}{t},\Semi{\bar\mu}{s})\leq
\sqrt{2F(\bar\mu)}\sqrt{|t-s|}, \qquad t,\,s\geq 0,\,\,\bar\mu\in\ProbabilitiesTwo{K}
\end{equation}
and the following properties:
\begin{itemize}
\item[(i)] {\bf (Uniform discrete approximation)}
$W_2(\Semi{\bar\mu}{t},\Semit{\bar\mu}{t}{\tau})\leq C\sqrt{\tau F(\bar\mu)}$
if $\bar\mu\in D(F)$, with $C=2(2\sqrt 2+1)$;
\item[(ii)]{\bf (Contractivity)}
$W_2(\Semi{\bar\mu}{t},\Semi{\bar\nu}{t})\leq W_2(\bar\mu,\bar\nu)$;
\item[(iii)] {\bf (Regularizing effect)}
$F(\Semi{\bar\mu}{t})\leq\inf\limits_{\nu\in D(F)}\frac{1}{2t}W_2^2(\bar\mu,\nu)+F(\nu)<+\infty$
for all $t>0$, $\bar\mu\in\overline{D(F)}$.
\end{itemize}
\end{theorem}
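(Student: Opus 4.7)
The plan is to build $\Semi{\bar\mu}{t}$ as the limit of the discrete Euler scheme $\Semit{\bar\mu}{t}{\tau}$ as $\tau\downarrow 0$, following the Crandall--Liggett strategy transplanted to the Wasserstein setting. The engine is the discrete variational inequality \eqref{discrevi}--\eqref{discrevi2} of Proposition~\ref{4.4}, which upgrades one-step minimality into comparison against arbitrary competitors in $D(F)$.

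\textbf{Step 1 (a priori estimates).} Starting from $\bar\mu\in D(F)$, test \eqref{discrevi2} with $\nu=\bar\mu$ at each step and telescope:
\[
\sum_{k=1}^{N} W_2^2(\mu^{k}_\tau,\mu^{k-1}_\tau)\le 2\tau\bigl(F(\bar\mu)-F(\mu^N_\tau)\bigr)\le 2\tau F(\bar\mu),
\]
so that Cauchy--Schwarz gives $W_2(\Semit{\bar\mu}{t}{\tau},\bar\mu)\le\sqrt{2tF(\bar\mu)}$ and $k\mapsto F(\mu^k_\tau)$ is non-increasing. This is the discrete analogue of \eqref{sat1} and is the starting point for all further bounds.

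\textbf{Step 2 (Cauchy estimate).} For two step sizes $\tau,\eta>0$, apply \eqref{discrevi} to each one-step move of the $\tau$-scheme with competitor $\mu^{j}_\eta$, and symmetrically to the $\eta$-scheme with competitor $\mu^{k}_\tau$. Adding the two inequalities the cross terms $F(\mu^k_\tau)-F(\mu^j_\eta)$ cancel, and a Kru\v{z}kov-style double summation in $(k,j)$ produces
\[
W_2^2\!\bigl(\mu^k_\tau,\mu^j_\eta\bigr)\;\le\;C\bigl(|k\tau-j\eta|+\tau+\eta\bigr)F(\bar\mu),
\]
which localizes along the diagonal $k\tau\approx j\eta=t$ into the required Cauchy property. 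Passing to the limit $\tau\downarrow 0$ yields a continuous curve $t\mapsto\Semi{\bar\mu}{t}\in\ProbabilitiesTwo{H}$; taking $\eta\downarrow 0$ while keeping $\tau$ fixed gives the discrete approximation rate in (i).

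\textbf{Step 3 (passage to the EVI).} Summing \eqref{discrevi} between consecutive indices gives, for every $\nu\in D(F)$ and $0\le s<t$,
\[
W_2^2(\Semit{\bar\mu}{t}{\tau},\nu)-W_2^2(\Semit{\bar\mu}{s}{\tau},\nu)\;\le\;2\int_s^t\bigl(F(\nu)-F(\Semit{\bar\mu}{r}{\tau})\bigr)\,dr+o_\tau(1).
\]
Using Step~2 to pass to the limit, together with weak lower semicontinuity of $F$ and $W_2^2$, gives the integrated version of \eqref{EVI} for $\Semi{\bar\mu}{t}$, hence the gradient flow property. The Hölder bound \eqref{sat1} is Step~1 transferred to the limit, while the regularizing effect (iii) follows by inserting an arbitrary $\nu\in D(F)$ in the telescoped inequality, dividing by $t$ and letting $\tau\downarrow 0$.

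\textbf{Step 4 (contractivity, uniqueness, extension).} Given two EVI curves $\mu_t,\nu_t$, freeze $\nu_s$ as a competitor in the EVI satisfied by $\mu_\cdot$ and symmetrically; a standard doubling-of-variables argument on the diagonal $s=t$ gives
\[
\frac{d}{dt}\tfrac{1}{2}W_2^2(\mu_t,\nu_t)\le 0,
\]
which at once yields uniqueness and the contraction (ii). Contractivity extends $\Semi{\cdot}{t}$ continuously from $D(F)$ to its $W_2$-closure: since any $\bar\mu\in\ProbabilitiesTwo{K}$ can be approximated in $W_2$ by convolutions with small Gaussians projected on $K$, or by $(\bar\mu*\gamma_\varepsilon)$-type constructions with bounded density w.r.t.\ $\gamma$, we have $\ProbabilitiesTwo{K}\subset\overline{D(F)}$, completing the statement.

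The main obstacle is Step~2: one must balance the two summations over $(k,j)$ and exploit the antisymmetry of the cross entropy terms, and it is exactly here that the \emph{strong} displacement convexity \eqref{convdist'}, beyond ordinary displacement convexity of Proposition~\ref{prop:displ_conv}, is essential, as it is what upgrades the one-sided minimality into the two-sided comparison of Proposition~\ref{4.4}.
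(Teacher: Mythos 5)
Your proposal takes a genuinely different route from the paper's. The paper derives the Cauchy property by comparing the step-$\tau$ scheme with the step-$\tau/2$ scheme (inequality \eqref{dyadic1}), so that one coarse step matches two fine steps and a single clean telescoping over dyadic refinements gives a geometric series and hence both convergence and the explicit constant $C=2(2\sqrt 2+1)$. You instead propose the classical Crandall--Liggett / Kru\v{z}kov doubling between two arbitrary step sizes $\tau,\eta$. That route is legitimate in principle, but your Step 2 contains a real gap.

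The claimed cancellation of entropy cross terms does not hold for $\tau\neq\eta$. Writing $a_k=\mu^k_\tau$, $b_j=\mu^j_\eta$ and adding the two one-step inequalities
\begin{align*}
W_2^2(a_{k+1},b_j)-W_2^2(a_k,b_j)&\le 2\tau\bigl(F(b_j)-F(a_{k+1})\bigr),\\
W_2^2(b_{j+1},a_k)-W_2^2(b_j,a_k)&\le 2\eta\bigl(F(a_k)-F(b_{j+1})\bigr),
\end{align*}
one finds that the right-hand side equals $2\tau\bigl(F(b_j)-F(b_{j+1})\bigr)+2\eta\bigl(F(a_k)-F(a_{k+1})\bigr)+2(\tau-\eta)\bigl(F(b_{j+1})-F(a_{k+1})\bigr)$: the last term survives and is of order $|\tau-\eta|F(\bar\mu)$ \emph{per step}. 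Summing over $O(t/\tau)$ steps without a cleverly chosen weighting produces an error of order $t|\tau-\eta|F(\bar\mu)/\tau$, which does not vanish as $\eta\downarrow 0$ with $\tau$ fixed. To make the Kru\v{z}kov doubling work one must introduce the two-dimensional discrete-Gronwall weights (or an equivalent interpolation between the grids), which is precisely the technical burden that the paper's dyadic comparison is designed to sidestep. So Step 2, as sketched, does not close; the overall plan can be made rigorous, but the hard part has been declared rather than done. Steps 1, 3 and 4 are fine in spirit and parallel the paper (Step 4's uniqueness/contractivity by doubling variables is exactly what the paper sketches), though your density claim $\ProbabilitiesTwo{K}\subset\overline{D(F)}$ deserves a more careful justification than ``convolutions with small Gaussians projected on $K$'', since such an operation need not preserve the support $K$; the standard route is rather to cut off and renormalize densities w.r.t.\ $\gamma$.
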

\begin{proof} We first sketch the proof of uniqueness of gradient flows, referring
to \cite[Corollary~4.3.3]{ags} for all technical details: if  $\mu^1(t)$, $\mu^2(t)$
are gradient flows starting from $\bar\mu$, setting $\nu=\mu^1(t)$
into
$$
\frac{d}{dt}\frac{1}{2} W_2^2(\mu^2(t),\nu)\leq F(\nu)-F(\mu^2(t))
$$
and $\nu=\mu^2(t)$ into
$$
\frac{d}{dt}\frac{1}{2} W_2^2(\mu^1(t),\nu)\leq F(\nu)-F(\mu^1(t)),
$$
one obtains that $\frac{d}{dt}W_2^2(\mu^1(t),\mu^2(t))\leq 0$,
whence the identity of $\mu^1$ and $\mu^2$ follows.

In order to show existence of gradient flows,
we consider first the case when $\bar\mu\in D(F)$.
Notice that $\mu^{k+1}_\tau$ satisfies
\begin{equation}\label{eeuler1}
F(\mu^{k+1}_\tau)+\frac{1}{2\tau}W_2^2(\mu^{k+1}_\tau,\mu^k_\tau)
\leq F(\nu)+\frac{1}{2\tau}W_2^2(\nu,\mu^k_\tau)
\qquad\forall \nu\in D(F)
\end{equation}
and, choosing in particular $\nu=\mu^k_\tau$, we obtain that
$F(\mu^{k+1}_\tau)\leq F(\mu^k_\tau)$ and
\begin{equation}\label{eeee}
W_2(\mu^{k+1}_\tau,\mu^k_\tau)\leq\sqrt{2\tau
[F(\mu^k_\tau)-F(\mu^{k+1}_\tau)]}.
\end{equation}
This inequality easily leads to the discrete $C^{1/2}$ estimate
\begin{equation}\label{eeeuler1}
W_2(\Semit{\bar\mu}{t}{\tau},\Semit{\bar\mu}{s}{\tau})\leq
\sqrt{2F(\bar\mu)}\sqrt{|t-s+\tau|}.
\end{equation}
Moreover a crucial role is played by the formula
\begin{equation}\label{discrevibis}
W_2^2(\Semit{\bar\mu}{(k+1)\tau}{\tau},\nu)-
W_2^2(\Semit{\bar\mu}{k\tau}{\tau},\nu)\leq
2\tau [F(\nu)-F(\Semit{\bar\mu}{(k+1)\tau}{\tau})]
\end{equation}
for all $\nu\in D(F)$, which follows from Proposition \ref{4.4}.

\medskip\noindent
\emph{Proof of (i).} We start proving the estimate
\begin{equation}\label{dyadic1}
W_2^2(\Semit{\bar\mu}{t}{\tau},\Semit{\bar\nu}{t}{{\frac{\tau}{2}}})-
W_2^2(\bar\mu,\bar\nu)\leq 2\tau F(\bar\nu)
\end{equation}
for all $\tau>0$ and all times $t$ that are integer multiples of $\tau$. To this aim,
from \eqref{discrevibis} we obtain the inequalities
\begin{eqnarray}\label{beni1}
&W_2^2(\Semit{\bar\nu}{{\frac{\tau}{2}}}{{\frac{\tau}{2}}},\theta)-W_2^2(\bar\nu,\theta)&\leq
\tau\left[F(\theta)-F(\Semit{\bar\nu}{{\frac{\tau}{2}}}{{\frac{\tau}{2}}})\right],
\\
\label{beni2}
&W_2^2(\Semit{\bar\nu}{\tau}{{\frac{\tau}{2}}},\theta)-
W_2^2(\Semit{\bar\nu}{{\frac{\tau}{2}}}{{\frac{\tau}{2}}},\theta)&\leq
\tau\left[F(\theta)-F(\Semit{\bar\nu}{\tau}{{\frac{\tau}{2}}})\right],
\end{eqnarray}
for all $\theta\in D(F)$, whose sum gives
\begin{equation}\label{beni12}
W_2^2(\Semit{\bar\nu}{\tau}{\frac{\tau}{2}},\theta)-W_2^2(\bar\nu,\theta)\leq
\tau\left[2F(\theta)-F(\Semit{\bar\nu}{{\frac{\tau}{2}}}{{\frac{\tau}{2}}})-
F(\Semit{\bar\nu}{\tau}{{\frac{\tau}{2}}})\right]
\end{equation}
for all $\theta\in D(F)$. Still from \eqref{discrevibis} we get
\begin{equation}\label{beni3}
W_2^2(\Semit{\bar\mu}{\tau}{\tau},\theta)-W_2^2(\bar\mu,\theta)\leq
2\tau \left[F(\theta)-F(\Semit{\bar\mu}{\tau}{\tau})\right]\qquad
\forall\theta\in D(F).
\end{equation}
Setting $\theta=\Semit{\bar\mu}{\tau}{\tau}$ in \eqref{beni12}
and $\theta=\bar\nu$ in \eqref{beni3}, we can add
the resulting inequalities to obtain
\begin{eqnarray}\label{dyadic2}\nonumber
W_2^2(\Semit{\bar\mu}{\tau}{\tau},\Semit{\bar\nu}{\tau}{{\frac{\tau}{2}}})
-W_2^2(\Semit{\bar\mu}{0}{\tau},\Semit{\bar\nu}{0}{\frac{\tau}{2}})&\leq&
\tau\left(2F(\bar\nu)-F(\Semit{\bar\nu}{{\frac{\tau}{2}}}{{\frac{\tau}{2}}})-
F(\Semit{\bar\nu}{\tau}{{\frac{\tau}{2}}})\right)\\
&\leq& 2\tau\left(F(\bar\nu)-F(\Semit{\bar\nu}{\tau}{{\frac{\tau}{2}}})\right).
\end{eqnarray}
Notice that \eqref{dyadic2} corresponds to \eqref{dyadic1} with $t=\tau$;
by adding the inequalities analogous to \eqref{dyadic2} between consecutive
times $m\tau$, $(m+1)\tau$, for $m=0,\ldots,N-1$, we obtain
\begin{equation}\label{dyadic3}
W_2^2(\Semit{\bar\mu}{N\tau}{\tau},\Semit{\bar\nu}{N\tau}{{\frac{\tau}{2}}})-
W_2^2(\bar\mu,\bar\nu)\leq 2
\tau\left(F(\bar\nu)-F(\Semit{\bar\nu}{N\tau}{{\frac{\tau}{2}}}\right),
\end{equation}
that yields \eqref{dyadic1} because $F$ is nonnegative.
Now, from \eqref{dyadic1} with $\bar\mu=\bar\nu$ we get
$$
W_2(\Semit{\bar\mu}{t}{{\frac{\tau}{2^m}}},
\Semit{\bar\mu}{t}{\frac{\tau}{2^{m+1}}})
\leq 2^{-m/2}\sqrt{2\tau F(\bar\mu)}
$$
for all $t$ that are integer multiples of $\tau/2^m$,
so that
\begin{equation}\label{beni4}
W_2(\Semit{\bar\mu}{t}{\frac{\tau}{2^m}},
\Semit{\bar\mu}{t}{\frac{\tau}{2^n}})
\leq \sum_{i=m}^{n-1} 2^{-i/2}\sqrt{2\tau F(\bar\mu)}
\end{equation}
for all $n>m\geq j$ and all $t$ that is an integer
multiple of $\tau/2^j$. For any such $t$ (and therefore on a dense set
of times) the sequence $(\Semit{\bar\mu}{t}{\frac{\tau}{2^n}})$ has the
Cauchy property and converges in $\ProbabilitiesTwo{H}$ to some limit,
that we shall denote by $\Semii{\bar\mu}{t}{\tau}$.

Using the discrete $C^{1/2}$
estimate \eqref{eeeuler1} we obtain convergence for all times, as
well as the uniform H\"older continuity \eqref{sat1}
of $t\mapsto\Semii{\bar\mu}{t}{\tau}$.

We prove now that $(\Semii{\bar\mu}{t}{\tau})_{t\geq 0}$
is a gradient flow starting from $\bar\mu$.
Indeed, we can read \eqref{discrevibis} as follows:
$$
\frac{d}{dt} \frac{1}{2} W_2^2(\Semit{\bar\mu}{t}{\tau},\nu)\leq
\tau\sum_{i=1}^\infty
[F(\nu)-F(\Semit{\bar\mu}{i \tau}{\tau})]\delta_{\frac{i}{\tau}}
$$
for all $\nu\in D(F)$, in the sense of distributions.
Passing to the limit as $n\to\infty$ in the previous inequality
with $\tau$ replaced by $\tau/2^n$, the lower semicontinuity
of $F$ gives
$$
\frac{d}{dt} \frac{1}{2} W_2^2(\Semii{\bar\mu}{t}{\tau},\nu)\leq
[F(\nu)-F(\Semii{\bar\mu}{t}{\tau})]\qquad\forall \nu\in D(F)
$$
in the sense of distributions. This proves that
$\Semii{\bar\mu}{t}{\tau}$ is a gradient flow starting from $\bar\mu$, and
since we proved that gradient flows are uniquely determined by the initial
condition, from now on we shall denote $\Semi{\bar\mu}{t}=\Semii{\bar\mu}{t}{\tau}$.

\smallskip\noindent
\emph{Proof of (i).} Passing to the limit as $n\to\infty$ in \eqref{beni4},
with $m=j=0$, we obtain that $W_2(\Semi{\bar\mu}{t},\Semit{\bar\mu}{t}{\tau})$
can be estimated with $2(\sqrt{2}+1)\sqrt{\tau F(\bar\mu)}$ when $t/\tau$
is an integer. From \eqref{eeeuler1}, \eqref{sat1}
and the triangle inequality we obtain (i).

\smallskip\noindent
\emph{Proof of (ii) when $\bar\mu\in D(F)$.}
It suffices to pass to the limit as $\tau\downarrow 0$
in \eqref{dyadic1}.

\medskip\noindent
\emph{Proof of (iii) when $\bar\mu\in D(F)$.} By adding the inequalities
\begin{eqnarray*}
W_2^2(\Semit{\bar\mu}{(i+1)\tau}{\tau},\nu)-W_2^2(\Semit{\bar\mu}{i\tau}{\tau},\nu)
&\leq& 2\tau [F(\nu)-F(\Semit{\bar\mu}{(i+1)\tau}{\tau})]\\&\leq&
2\tau [F(\nu)-F(\Semit{\bar\mu}{N\tau}{\tau})]
\end{eqnarray*}
for $i=0,\ldots,N-1$ we get
$$
W_2^2(\Semit{\bar\mu}{N\tau}{\tau},\nu)-W_2^2(\bar\mu,\nu) \leq
2N\tau [F(\nu)-F(\Semit{\bar\mu}{N\tau}{\tau})].
$$
Replacing now $\tau$ by $\tau/2^m$ in this inequality, and defining
$N$ as the integer part of $2^m t/\tau$ (so that $N\tau/2^m\to t$),
we can let $m\to\infty$ to obtain (iii),
neglecting the term $W_2^2(\Semit{\bar\mu}{N\tau}{\tau},\nu)$.

\smallskip\noindent
In order to prove (ii) and (iii) when $\bar\mu\in\overline{D(F)}$
we use a density argument. Indeed, let
$\bar\mu_n\in D(F)$ be converging to $\bar\mu\in\overline{D(F)}$
in $\ProbabilitiesTwo{H}$:
by (ii) we obtain that $\Semi{\bar\mu_n}{t}$ is a Cauchy sequence for
all $t\geq 0$, and therefore converges to some limit, that we shall denote
by $\Semi{\bar\mu}{t}$. It is not difficult to prove by approximation that
$\Semi{\bar\mu}{t}$ is a gradient flow, and it remains to show that it
starts from $\bar\mu$. We have indeed
$W_2(\Semi{\bar\mu}{t},\Semi{\bar\mu_n}{t})\leq W_2(\bar\mu_n,\bar\mu)$, so
that
$$
\limsup_{t\downarrow 0}W_2(\Semi{\bar\mu}{t},\bar\mu)\leq
2W_2(\bar\mu_n,\bar\mu)+\limsup_{t\downarrow 0} W_2(\Semi{\bar\mu_n}{t},\bar\mu_n)=
2W_2(\bar\mu_n,\bar\mu).
$$
Letting $n\to\infty$ we obtain that $\Semi{\bar\mu}{t}\to\bar\mu$ as $t\downarrow 0$.
\end{proof}

\section{$\Gamma$-convergence and stability properties}

In this section we consider a sequence $(\gamma_n)$
of log-concave probability measures on $H$ weakly converging to
$\gamma$ and a sequence of Hilbertian norms
on $\Hn =H^0(\gamma_n)$ satisfying Assumption~\ref{h_n}.
We are going to prove that the gradient flows
associated with $\RelativeEntropy{\cdot}{\gamma_n}$ with respect to
$W_{2,H^n}$ converge to the gradient flow
associated with $\RelativeEntropy{\cdot}{\gamma}$ with respect to $W_2$,
where the notation $W_{2,H^n}$ has been introduced in \eqref{www}.

This result is natural in view of Theorem \ref{tgflow}, since
the discrete approximating flow $\Semitn{\bar\mu^n}{\cdot}{\tau}$
of $\RelativeEntropy{\cdot}{\gamma_n}$ are defined only
in terms of $\gamma_n$ and $W_{2,H^n}$.
However, the same result is much less obvious
in view of the connection with the Fokker-Planck
equation (\ref{fk}) and the associated stochastic process
$(X_t)_{t\geq 0}$: see Sections \ref{dirichlet} and Section~\ref{process}.

The main result of this section is the following:
\begin{theorem}[Stability of gradient flows]\label{stabflows}
  Suppose that $(\gamma_n)\subset\Probabilities{H}$ is a sequence of log-concave
  probability measures converging weakly to $\gamma\in\Probabilities H$ and
  that Assumption~\ref{h_n} holds. Let $\bar\mu^n\in\ProbabilitiesTwo{A_n}$
  and let $(\mu^n_t)_{t\geq 0}$ be
  the gradient flows associated with $\RelativeEntropy{\cdot}{\gamma_n}$
  in $\ProbabilitiesTwo{A_n}$ with respect to $W_{2,H^n}$.\\
  If $\bar\mu_n$ converge to $\bar\mu\in\ProbabilitiesTwo{A}$ in $\ProbabilitiesTwo H$
  then $\mu^n_t\to\mu_t$ in $\ProbabilitiesTwo{H}$ for every $t\in [0,+\infty)$,
  where $(\mu_t)_{t\geq 0}$ is
  the gradient flow associated with $\RelativeEntropy{\cdot}{\gamma}$
  in $\ProbabilitiesTwo{A}$ with respect to $W_2$.
\end{theorem}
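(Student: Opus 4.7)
The strategy is to pass to the limit as $n\to\infty$ in the implicit Euler scheme built in Section~\ref{scheme}, and then transfer the convergence at discrete times to the continuous semigroups through the uniform approximation estimate Theorem~\ref{tgflow}(i). Two prerequisites enable the limit.

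The first is the $\Gamma$-convergence of the relative entropies along $\ProbabilitiesTwo H$-convergent sequences: if $\mu_n\in\ProbabilitiesTwo{A_n}$ and $\mu_n\to\mu$ weakly in $H$, then
$$
\RelativeEntropy{\mu}{\gamma}\le \liminf_{n\to\infty}\RelativeEntropy{\mu_n}{\gamma_n},
$$
and for each $\mu\in\ProbabilitiesTwo A$ with $\RelativeEntropy{\mu}{\gamma}<+\infty$ one constructs a recovery sequence $\tilde\mu_n\in\ProbabilitiesTwo{A_n}$ with $\tilde\mu_n\to\mu$ in $\ProbabilitiesTwo H$ and $\RelativeEntropy{\tilde\mu_n}{\gamma_n}\to\RelativeEntropy{\mu}{\gamma}$. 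The lower bound is immediate from the duality formula \eqref{duality}: for every $S\in C_b(H)$, $\int S\,d\mu_n-\int(e^S-1)\,d\gamma_n\to\int S\,d\mu-\int(e^S-1)\,d\gamma$, so one takes the supremum over $S$. The recovery sequence is the delicate part and is typically obtained by regularizing $\mu$ via convolution with a small Gaussian on $H_n$ and a diagonal argument, exploiting the log-concavity of $\gamma_n$. The second prerequisite is the compatibility between the distances $W_{2,H^n}$ and $W_2$ afforded by Assumption~\ref{h_n}: if $\mu_n,\nu_n\in\ProbabilitiesTwo{A_n}$ converge in $\ProbabilitiesTwo H$ to $\mu,\nu$, then $W_{2,H^n}(\mu_n,\nu_n)\to W_2(\mu,\nu)$. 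The two-sided bound \eqref{bastakappa} and the pointwise convergence \eqref{sig} together allow one to test with optimal plans in either direction, controlling residual terms through $\|\pi_n h-h\|_H\to0$ on $W_2$-tight sets.

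With these tools, I would argue by induction on the discrete step of the Euler scheme. Fix $\tau>0$ and start from $\tilde{\bar\mu}^n\in\ProbabilitiesTwo{A_n}$ given by the recovery sequence of $\bar\mu$, so that $\sup_n\RelativeEntropy{\tilde{\bar\mu}^n}{\gamma_n}<+\infty$ whenever $\bar\mu\in D(\RelativeEntropy{\cdot}{\gamma})$. Assume inductively that $\Semitn{\tilde{\bar\mu}^n}{k\tau}{\tau}\to\Semit{\bar\mu}{k\tau}{\tau}$ in $\ProbabilitiesTwo H$. The Euler step estimate $W_{2,H^n}^2(\Semitn{\tilde{\bar\mu}^n}{(k+1)\tau}{\tau},\Semitn{\tilde{\bar\mu}^n}{k\tau}{\tau})\le 2\tau\,\RelativeEntropy{\Semitn{\tilde{\bar\mu}^n}{k\tau}{\tau}}{\gamma_n}$ and the entropy inequality \eqref{entrine} yield tightness of $(\Semitn{\tilde{\bar\mu}^n}{(k+1)\tau}{\tau})_n$ in $\ProbabilitiesTwo H$; by the $\Gamma$-liminf, the recovery sequence and the Wasserstein compatibility, every $\ProbabilitiesTwo H$-limit point minimizes
$$
\nu\mapsto \RelativeEntropy{\nu}{\gamma}+\frac{1}{2\tau}W_2^2(\nu,\Semit{\bar\mu}{k\tau}{\tau}),
$$
and the uniqueness in Proposition~\ref{exdi} forces the full convergence. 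Next, Theorem~\ref{tgflow}(i) bounds $W_{2,H^n}(\mu^n_t,\Semitn{\tilde{\bar\mu}^n}{t}{\tau})$ and $W_2(\mu_t,\Semit{\bar\mu}{t}{\tau})$ by $C\sqrt{\tau\,\RelativeEntropy{\tilde{\bar\mu}^n}{\gamma_n}}$ and $C\sqrt{\tau\,\RelativeEntropy{\bar\mu}{\gamma}}$ respectively, uniformly in $n$; letting first $n\to\infty$ and then $\tau\downarrow 0$ gives $\mu^n_t(\tilde{\bar\mu}^n)\to\mu_t$ in $\ProbabilitiesTwo H$. For a general datum $\bar\mu^n\to\bar\mu$ in $\ProbabilitiesTwo H$ with arbitrary $\bar\mu\in\ProbabilitiesTwo A$, the contraction Theorem~\ref{tgflow}(ii) transfers the result via
$$
W_{2,H^n}(\mu^n_t(\bar\mu^n),\mu^n_t(\tilde{\bar\mu}^n))\le W_{2,H^n}(\bar\mu^n,\tilde{\bar\mu}^n),
$$
together with the density in $\ProbabilitiesTwo A$ of measures with finite relative entropy with respect to $\gamma$.

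The hardest step I anticipate is the construction of the entropy recovery sequence. The affine hulls $A_n$ may differ genuinely from $A$, so $\tilde\mu_n$ must simultaneously sit in $A_n$, converge in $\ProbabilitiesTwo H$ rather than merely weakly, and satisfy the sharp asymptotic upper bound on relative entropies. Log-concavity of $\gamma_n$ together with Proposition~\ref{charalog} should allow one to reduce to finite-dimensional projections and to build $\tilde\mu_n$ by Gaussian convolution followed by pushforward under $\pi_n$, controlling the entropy through fine estimates on densities and the quantitative behaviour of the projections provided by Assumption~\ref{h_n}.
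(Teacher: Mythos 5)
Your overall strategy coincides with the paper's: $\Gamma$-convergence of the entropies (Lemma~\ref{lprod}), stability of the single Euler step, the uniform discrete-to-continuous estimate from Theorem~\ref{tgflow}(i), and contraction to cover general initial data.

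There is, however, a genuine gap at the heart of your inductive step. You assert that the entropy inequality \eqref{entrine} together with the Euler-step bound ``yield tightness of $(\Semitn{\tilde{\bar\mu}^n}{(k+1)\tau}{\tau})_n$ in $\ProbabilitiesTwo H$.'' It does not. The entropy inequality produces ordinary (weak) tightness, and the Euler-step bound gives boundedness of second moments, but neither gives precompactness in $\ProbabilitiesTwo H$: to continue the induction you need the full \emph{convergence with moments} (equivalently, $\ProbabilitiesTwo{H}$-convergence, by Lemma~\ref{lprodd}) of the $(k+1)$-th Euler iterate, since the next step's Wasserstein term $W_{2,H_n}^2(\cdot,\Semitn{\tilde{\bar\mu}^n}{(k+1)\tau}{\tau})$ must itself converge in the sharp sense via Lemma~\ref{gW}(ii). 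Weak convergence plus a uniform second-moment bound is not enough. The paper closes this gap by a separate argument: after identifying the unique weak limit as the minimizer, it sets $\lambda=\mu$ in the chain of inequalities to force equality throughout, deduces $W_2^2(\mu_{n(k)},\nu_{n(k)})\to W_2^2(\mu,\nu)$, and then invokes Lemma~\ref{lproddd} on the optimal plans to upgrade to convergence with moments. This equality-in-all-inequalities step is the missing ingredient your proof needs to make the induction self-sustaining.

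A smaller remark: the recovery-sequence construction you flag as the hardest step is in fact lighter than you anticipate. The paper does not convolve with Gaussians; it takes the dense class $\mathscr R=\{e^{-\eps\|\cdot\|_H^2}f\gamma:\ f\in C_b(H),\,f\geq 0,\ \eps>0\}$ and, for $\mu=g\gamma\in\mathscr R$, simply sets $\mu_n:=Z_n^{-1}g\gamma_n$ with $Z_n=\int g\,d\gamma_n$. Since $\gamma_n$ is supported in $A_n$, so is $\mu_n$; the $e^{-\eps\|\cdot\|^2}$ factor controls second moments and entropies, and a truncation argument handles density. No log-concavity of $\gamma_n$ is used at this point.
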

The crucial property in the proof of this stability
result is the $\Gamma$-convergence of the functionals
$\RelativeEntropy{\cdot}{\gamma_n}$ to $\RelativeEntropy{\cdot}{\gamma}$.
The concept of $\Gamma$-convergence is due to De Giorgi and
is a classical tool of Calculus of Variations.
\begin{lemma}[Convergence of entropy functionals] \label{lprod}
If $\gamma_n\in\Probabilities{H}$ converge
weakly to $\gamma\in\Probabilities{H}$
then $\RelativeEntropy{\cdot}{\gamma_n}:\ProbabilitiesTwo{H}\to[0,+\infty]$
\ $\Gamma$-converge to
$\RelativeEntropy{\cdot}{\gamma}:\ProbabilitiesTwo{H}\to[0,+\infty]$,
i.e.
\begin{itemize}
\item[(i)] for any sequence $(\mu_n)\subset\ProbabilitiesTwo{H}$ converging
\underline{weakly} to $\mu\in\ProbabilitiesTwo{H}$, we have
\begin{equation}\label{liminfc}
\liminf_{n\to\infty} \, \RelativeEntropy{\mu_n}{\gamma_n} \, \geq \,
\RelativeEntropy{\mu}{\gamma};
\end{equation}
\item[(ii)] for any $\mu\in\ProbabilitiesTwo{H}$
there exist $\mu_n\in\ProbabilitiesTwo{H}$
converging to $\mu$ \underline{in $\ProbabilitiesTwo{H}$}
such that
\begin{equation}\label{limsupc}
\limsup_{n\to\infty} \, \RelativeEntropy{\mu_n}{\gamma_n} \, \leq
\, \RelativeEntropy{\mu}{\gamma}.
\end{equation}
\end{itemize}
\end{lemma}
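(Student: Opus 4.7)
\emph{Plan.} The two parts call for completely different tools: (i) is a soft consequence of the variational representation of the entropy, while (ii) requires an explicit construction.

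For the $\Gamma$-liminf inequality (i) I would use the duality formula \eqref{duality}. For every fixed $S\in C_b(H)$ the functional $\eta\mapsto \int S\,d\eta-\int (e^S-1)\,d\gamma$ is a lower bound for $\mathcal{H}(\eta|\gamma)$, and since $S$ and $e^S-1$ are both bounded and continuous, weak convergence of $\mu_n\to\mu$ and of $\gamma_n\to\gamma$ give
\[
\liminf_{n\to\infty}\RelativeEntropy{\mu_n}{\gamma_n}
\;\ge\;\liminf_{n\to\infty}\Big(\int_H S\,d\mu_n-\int_H(e^S-1)\,d\gamma_n\Big)
\;=\;\int_H S\,d\mu-\int_H (e^S-1)\,d\gamma.
\]
Taking the supremum over $S\in C_b(H)$ on the right-hand side and applying \eqref{duality} to $\mu$ yields \eqref{liminfc}.

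For the $\Gamma$-limsup (ii) the construction is explicit. If $\RelativeEntropy{\mu}{\gamma}=+\infty$ the constant sequence $\mu_n:=\mu$ already belongs to $\ProbabilitiesTwo H$ and works trivially, so we may assume $\mu=\rho\gamma$ with $\int_H \rho\log\rho\,d\gamma<+\infty$. The template for the recovery sequence is
\[
\mu_n:=\frac{1}{Z_n}\rho\,\gamma_n,\qquad Z_n:=\int_H \rho\,d\gamma_n,
\]
which behaves as required in the favourable case where $\rho\in C_b(H)$ and $\supp\rho$ is bounded. Indeed $Z_n\to 1$ by weak convergence of $\gamma_n$ (since $\rho\in C_b(H)$); both $\rho\,\|x\|^2$ and $\rho\log\rho$ are bounded and continuous on the bounded support of $\rho$, so that $\mu_n\to\mu$ weakly and in second moments, hence in $\ProbabilitiesTwo H$ by Lemma~\ref{l_basic_tight}; and
\[
\RelativeEntropy{\mu_n}{\gamma_n}\;=\;\frac{1}{Z_n}\int_H \rho\log\rho\,d\gamma_n\;-\;\log Z_n
\;\longrightarrow\; \int_H \rho\log\rho\,d\gamma\;=\;\RelativeEntropy{\mu}{\gamma}.
\]

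The main obstacle is then to reduce the general case to this special one, which I would do by a diagonal argument built from three successive approximations of $\rho$: (a) replace $\rho$ by $(\rho\wedge M)+\delta$, renormalized, to obtain a density bounded away from $0$ and $\infty$; (b) restrict to a ball $B(0,R)$ and renormalize to obtain a compactly supported density; (c) regularize by the Ornstein--Uhlenbeck (or heat) semigroup $P_\varepsilon$ associated with a fixed non-degenerate Gaussian on $H$, to obtain a continuous bounded density. Each step can be arranged so that both $W_2$ and $\RelativeEntropy{\cdot}{\gamma}$ converge: for the entropy one combines the bound $t\log t\ge -1/e$ with dominated convergence on $\{\rho\ge 1\}$ and uses Jensen's inequality (applied to the convex function $t\mapsto t\log t$) to show that regularization is entropy non-increasing; for the second moment, finiteness of $\SecondMoment{\mu}$ together with direct tail estimates suffices. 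The delicate point is step (c) in the infinite-dimensional setting, where compactly supported mollifiers are unavailable and one must work with a Gaussian semigroup; once this is in place, a standard diagonal extraction combines the three layers of approximation into a single sequence of ``nice'' densities, and the template above then produces the required recovery sequence satisfying \eqref{limsupc}.
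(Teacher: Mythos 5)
Part (i) is correct and is exactly the paper's argument. Part (ii) uses the same recovery template $\mu_n := Z_n^{-1}\rho\,\gamma_n$ and the same diagonal strategy, but the reduction to nice densities is where you diverge from the paper, and your step (c) has a genuine gap.

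The paper works with the dense class
$\mathscr R=\{e^{-\eps\|\cdot\|^2}f\gamma:\ f\in C_b(H),\ f\geq 0,\ \eps>0\}$
and establishes density-in-energy by a purely \emph{soft} argument: density of $C_b(H)$ in $L^1(\gamma)$ plus dominated convergence handles any $\rho\in L^\infty(\gamma)$, and a truncation then handles general finite-entropy densities. The factor $e^{-\eps\|x\|^2}$ plays the same role as your ``compact support'' --- it makes $\|x\|^2 g(x)$ bounded and continuous so that second moments pass to the limit --- but, unlike compact support, it is compatible with approximating $\rho$ by arbitrary bounded continuous functions and letting $\eps\downarrow 0$ afterwards.

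The problem is your step (c). You regularize the $L^\infty$, ball-supported density $\rho$ by the Ornstein--Uhlenbeck semigroup $P_\eps$ of a \emph{fixed} non-degenerate Gaussian $\nu$, and appeal to Jensen (for $t\mapsto t\log t$) to claim that the relative entropy $\RelativeEntropy{\cdot}{\gamma}$ does not increase. That argument requires two things that do not hold here: that $\gamma$ is $P_\eps$-invariant (it is not, unless $\gamma=\nu$), and that $P_\eps$ of a density with respect to $\gamma$ is again a density with respect to $\gamma$. Without invariance, $\int_H P_\eps\rho\log(P_\eps\rho)\,d\gamma$ cannot be bounded by $\int_H\rho\log\rho\,d\gamma$ via Jensen, and indeed $P_\eps\rho$ need not integrate to $1$ against $\gamma$. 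A further difficulty is that a log-concave $\gamma$ may be singular with respect to $\nu$ (for example when $\gamma$ is supported on a proper affine subspace), so $L^1(\nu)$-convergence of $P_\eps\rho$ to $\rho$ gives no control in $L^1(\gamma)$. To repair (c), simply replace the mollification by the paper's observation: $C_b(H)$ is dense in $L^1(\gamma)$, so bounded densities can be $L^1(\gamma)$-approximated by nonnegative functions in $C_b(H)$ (truncated to preserve the $L^\infty$ bound), and dominated convergence then gives convergence of $\int\rho\log\rho\,d\gamma$; attaching the $e^{-\eps\|x\|^2}$ factor supplies the tail control that your compact support was meant to provide.
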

\begin{proof}
The ``liminf'' inequality (i) in the definition of $\Gamma$-convergence
follows directly from the duality formula \eqref{duality}: if $\mu_n\to\mu$
weakly, for all bounded continuous $S:H\to\R$ we have
$$
\int_H S\,d\mu-\int_H (e^S-1)\,d\gamma=\lim_{n\to\infty}
\left[\int_H S\,d\mu_n-\int_H (e^S-1)\,d\gamma_n\right]
\leq\liminf_{n\to\infty} \RelativeEntropy{\mu_n}{\gamma_n}.
$$
Taking the supremum in the left hand side the $\liminf$ inequality is achieved.

In order to show the $\limsup$ inequality we first notice that, by diagonal
arguments, we need only to show it for a dense subset
$\mathscr R\subset D(\RelativeEntropy{\cdot}{\gamma})$;
here density should be understood in the sense that for any
$\nu\in D(\RelativeEntropy{\cdot}{\gamma})$
there exist $\nu_n\in \mathscr R$ converging to $\nu$ in $\ProbabilitiesTwo{H}$ with
$\RelativeEntropy{\nu_n}{\gamma}\to\RelativeEntropy{\nu}{\gamma}$.
Let us check that
$$\mathscr R:=\left\{ e^{-\eps\|\cdot\|_H^2}f\gamma\in \ProbabilitiesTwo H:\ f\in C_b(H),\,\,f\geq 0,\ \eps>0\right\}$$
has these properties: indeed, in this case, given $\mu=g\gamma\in \mathscr R$ with
$g(x)=e^{-\eps\|x\|_H^2}f(x)$, we can simply
take $\mu_n=Z_n^{-1} g\gamma_n$, with $Z_n:=\int_H g\,d\gamma_n$,
to achieve the $\limsup$ inequality. The ``density in energy''
of $\mathscr R$ in the sense described above can be achieved as follows: first, using
the density of $C_b(H)$ in $L^1(\gamma)$ and the dominated convergence theorem,
we see that any $\mu=\rho\gamma\in\ProbabilitiesTwo{H}$ with
$\rho\in L^\infty(\gamma)$ can be approximated by elements of $\mathscr R$.
A truncation argument then gives that any
$\mu\in D(\RelativeEntropy{\cdot}{\gamma})$ can be approximated.
\end{proof}

In order to clarify the structure of the proof of Theorem~\ref{stabflows},
it is useful to introduce the following concept:
we say that $\mu_n\in\Probabilities{H_n}$ converge with
moments to $\mu\in\Probabilities{H}$ if $\mu_n\to\mu$ weakly in
$\Probabilities{H}$ and $\int_{H_n}\|x\|^2_{H_n}\,d\mu_n\to
\int_H\|x\|_H^2\,d\mu$. Notice that for any open set $A\subset H$ we can
use \eqref{fatext1} to obtain
\begin{equation}\label{fatext2}
\liminf_{n\to\infty}\int_A\|x\|_{H_n}^2\,d\mu_n=
\liminf_{n\to\infty}\int_A\|\pi_n(x)\|_{H_n}^2\,d\mu_n\geq
\int_A \|x\|_H^2\,d\mu
\end{equation}
whenever $\mu_n\to\mu$ weakly in $\Probabilities{H}$. Therefore, in the
proof of convergence with moments, only the $\limsup$ needs to be proved.

\begin{lemma} \label{lprodd}
Convergence with moments is equivalent to convergence in $\ProbabilitiesTwo{H}$.
\end{lemma}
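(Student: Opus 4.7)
The plan is to prove the two implications separately; the unifying tool is a truncation argument based on Assumption~\ref{h_n}, which lets us pass between $\|\cdot\|_{H_n}$ and $\|\cdot\|_H$ near the origin via \eqref{sig}, and handle the tails via the norm equivalence \eqref{bastakappa}. First I would fix $R>0$, choose a Lipschitz cutoff $\phi_R:H\to[0,1]$ that equals $1$ on $\{\|x\|_H\leq R\}$ and vanishes outside $\{\|x\|_H\leq 2R\}$, and set $f_n(x):=\phi_R(x)\|\pi_n(x)\|_{H_n}^2$. Combining \eqref{bastakappa} with the fact that $\pi_n$ is a $1$-Lipschitz projection in $H$, the family $(f_n)$ is uniformly bounded (by $4\kappa^2 R^2$) and equi-Lipschitz on $H$; by \eqref{sig} it converges pointwise to $f(x):=\phi_R(x)\|x\|_H^2$. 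Applying \eqref{fatext1} to both $f_n$ and $-f_n$ yields $\lim_n\int f_n\,d\mu_n=\int f\,d\mu$, and since $\mu_n$ is carried by $H_n$ (where $\pi_n$ is the identity) this reads
\[
\lim_{n\to\infty}\int_H \phi_R\,\|x\|_{H_n}^2\,d\mu_n\;=\;\int_H \phi_R\,\|x\|_H^2\,d\mu.
\]

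For the implication from convergence in $\ProbabilitiesTwo H$ to convergence with moments, the bound $\|x\|_{H_n}^2\leq\kappa^2\|x\|_H^2$ together with \eqref{cami2} gives
\[
\limsup_{n\to\infty}\int_{\{\|x\|_H>R\}}\|x\|_{H_n}^2\,d\mu_n \;\leq\; \kappa^2\,\limsup_{n\to\infty}\int_{\{\|x\|_H>R\}}\|x\|_H^2\,d\mu_n \;\longrightarrow\;0\quad\text{as }R\to\infty.
\]
Adding this tail estimate to the cutoff identity and letting $R\to\infty$ I obtain $\limsup_n\int\|x\|_{H_n}^2\,d\mu_n\leq\int\|x\|_H^2\,d\mu$, and the matching $\liminf$ is exactly \eqref{fatext2}.

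The converse direction is where I expect slightly more care. The strategy is to extract from convergence with moments the uniform integrability condition \eqref{cami2} for $(\mu_n)$ in the $H$-norm, after which Lemma~\ref{l_basic_tight} concludes. Subtracting the cutoff limit from the assumed $\int\|x\|_{H_n}^2\,d\mu_n\to\int\|x\|_H^2\,d\mu$ yields
\[
\limsup_{n\to\infty}\int_{\{\|x\|_H>R\}}\|x\|_{H_n}^2\,d\mu_n\;\leq\;\int_{\{\|x\|_H\geq R\}}\|x\|_H^2\,d\mu,
\]
and the reverse norm bound $\|x\|_H^2\leq\kappa^2\|x\|_{H_n}^2$ transfers this (up to a factor $\kappa^2$) to an estimate on $\limsup_n\int_{\{\|x\|_H>R\}}\|x\|_H^2\,d\mu_n$. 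Since $\mu\in\ProbabilitiesTwo H$, the right-hand side tends to $0$ as $R\uparrow\infty$, which is precisely \eqref{cami2}.

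The only genuinely nontrivial point in this plan is the identification $\lim_n\int f_n\,d\mu_n=\int f\,d\mu$ for the truncated test functions; this reduces via \eqref{fatext1} applied to both $f_n$ and $-f_n$ (or, equivalently, to the tightness of $(\mu_n)$ combined with uniform convergence of $f_n$ to $f$ on compact sets via Arzel\`a--Ascoli), and everything else is bookkeeping with the cutoff and the two-sided norm equivalence \eqref{bastakappa}.
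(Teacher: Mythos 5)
Your proof is correct and follows essentially the same route as the paper's. The paper truncates with $\|\pi_n(\cdot)\|_{H_n}^2\wedge R^2$ (one direction) and with the indicator of the open set $\{\|x\|_H<R\}$ via \eqref{fatext2} (the other direction), where you use a Lipschitz cutoff $\phi_R$ for both; these are interchangeable truncation devices, and the two-sided application of \eqref{fatext1} to $\pm f_n$, the $\kappa^2$ tail transfer from \eqref{bastakappa}, and the invocation of \eqref{cami2} are exactly as in the paper. (A purely cosmetic remark: with your cutoff vanishing outside $\{\|x\|_H\leq 2R\}$, the tail estimate you obtain is over $\{\|x\|_H>2R\}$, not $\{\|x\|_H>R\}$ as written, but since you send $R\to\infty$ this changes nothing.)
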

\begin{proof} If $\mu_n\to\mu$ with moments, \eqref{fatext2} with $A=\{\|x\|_H<R\}$ gives
\begin{eqnarray*}
\lim_{R\to\infty}\limsup_{n\to\infty}\int_{\{\|x\|_H\geq R\}}\|x\|_H^2\,d\mu_n
&\leq&\kappa^2
\lim_{R\to\infty}\limsup_{n\to\infty}\int_{\{\|x\|_H\geq R\}}\|\pi_n(x)\|_{H_n}^2\,d\mu_n\\
&\leq&\kappa^2\lim_{R\to\infty}\int_{\{\|x\|_H\geq R\}}\|x\|_H^2\,d\mu=0.
\end{eqnarray*}
We obtain the convergence in $\ProbabilitiesTwo{H}$ from \eqref{cami2}.
Conversely, if $\mu_n\to\mu$ weakly, \eqref{fatext1} gives
\begin{equation}\label{trunca}
\lim_{n\to\infty}
\int_{H_n}\|\pi_n(x)\|_{H_n}^2\wedge R^2\,d\mu_n=
\int_H\|x\|_H^2\wedge R^2\,d\mu
\quad\qquad\forall R>0.
\end{equation}
If $\mu_n\to\mu$ in $\ProbabilitiesTwo{H}$, we can use \eqref{cami2}
and \eqref{bastakappa} to obtain
$\limsup_n\int_{\{\|x\|_{H_n}\geq R\}}\|x\|_{H_n}^2\,d\mu_n\to 0$
as $R\to\infty$, and if we combine this information with \eqref{trunca}
we obtain the convergence with moments.
\end{proof}

\begin{lemma} \label{lproddd} Assume that
$\mu_n,\,\nu_n\in\Probabilities{H_n}$, that
$\Sigma_n\in \Gamma(\mu_n,\nu_n)$ is
converging to $\Sigma\in\Gamma(\mu,\nu)$ weakly
and that $\mu_n\to\mu$ with moments, while
$\int_{H_n}\|y\|_{H_n}^2\,d\nu_n$ is bounded.
Then
$$
\lim_{n\to\infty}\int_{H_n\times H_n}\la x,y\ra_{H_n}\,d\Sigma_n=
\int_{H\times H}\la x,y\ra_H\,d\Sigma.
$$
\end{lemma}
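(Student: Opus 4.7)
The plan is to combine Skorohod's representation theorem with a double-truncation argument. The main obstacle is the asymmetric hypothesis: $\mu_n\to\mu$ with moments gives (via Lemma~\ref{lprodd} and \eqref{cami2}) uniform integrability of $\|\cdot\|_{H_n}^2$ under $\mu_n$, but we only have a uniform bound on $\int\|y\|_{H_n}^2\,d\nu_n$, not uniform integrability; the tail of the integrand in the $y$-variable therefore has to be bootstrapped from $\mu_n$ through the coupling.

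Since $H\times H$ is Polish and $\Sigma_n\to\Sigma$ weakly, I would realize the $\Sigma_n$ via Skorohod as laws of pairs $(X_n,Y_n)$, with $X_n,Y_n\in H_n$, on a common probability space, so that $(X_n,Y_n)\to (X,Y)$ a.s., where $(X,Y)$ has law $\Sigma$. The pointwise convergence $\langle X_n,Y_n\rangle_{H_n}\to \langle X,Y\rangle_H$ a.s.\ then follows from the polarization identity, provided $\|X_n\|_{H_n}\to\|X\|_H$ a.s.\ (and similarly for $Y_n$ and $X_n+Y_n$). Using $X_n=\pi_n X_n$, the triangle inequality and \eqref{bastakappa} give
\[
  \bigl|\|X_n\|_{H_n}-\|\pi_n X\|_{H_n}\bigr|\leq\|\pi_n(X_n-X)\|_{H_n}\leq\kappa\|X_n-X\|_H,
\]
while $\|\pi_n X\|_{H_n}\to\|X\|_H$ by \eqref{sig}.

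Next I would truncate with $\Phi_R(x,y):=\phi_R(\|x\|_H)\phi_R(\|y\|_H)$, where $\phi_R\equiv 1$ on $[0,R]$ and $\phi_R\equiv 0$ on $[R+1,+\infty)$. On the support of $\Phi_R$ the integrand is bounded by $\kappa^2(R+1)^2$, so the pointwise convergence above and bounded convergence yield
\[
  \lim_{n\to\infty}\E\bigl[\Phi_R(X_n,Y_n)\,\langle X_n,Y_n\rangle_{H_n}\bigr]=\E\bigl[\Phi_R(X,Y)\,\langle X,Y\rangle_H\bigr].
\]
The tail in the $x$-variable is controlled by Cauchy--Schwarz: using the inclusion $\{\|x\|_H>R\}\subset\{\|x\|_{H_n}>R/\kappa\}$,
\[
  \E\bigl[\chi_{\{\|X_n\|_H>R\}}\,|\langle X_n,Y_n\rangle_{H_n}|\bigr]
  \leq\Bigl(\int_{\{\|x\|_{H_n}>R/\kappa\}}\!\|x\|_{H_n}^2\,d\mu_n\Bigr)^{1/2}\Bigl(\int\|y\|_{H_n}^2\,d\nu_n\Bigr)^{1/2},
\]
which tends to $0$ uniformly in $n$ as $R\to\infty$ by uniform integrability of $\|\cdot\|_{H_n}^2$ under $\mu_n$.

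The hard step is the $y$-tail. Cauchy--Schwarz gives
\[
  \E\bigl[\chi_{\{\|Y_n\|_H>R\}}\,|\langle X_n,Y_n\rangle_{H_n}|\bigr]
  \leq\Bigl(\int\|x\|_{H_n}^2\chi_{\{\|y\|_H>R\}}\,d\Sigma_n\Bigr)^{1/2}\Bigl(\int\|y\|_{H_n}^2\,d\nu_n\Bigr)^{1/2},
\]
and I would bound the first factor by a \emph{double truncation}: for every $T>0$,
\[
  \int\|x\|_{H_n}^2\chi_{\{\|y\|_H>R\}}\,d\Sigma_n
  \leq T^2\,\Sigma_n(\{\|y\|_H>R\})+\int_{\{\|x\|_{H_n}>T\}}\!\|x\|_{H_n}^2\,d\mu_n.
\]
Markov's inequality gives $\Sigma_n(\{\|y\|_H>R\})\leq \kappa^2 R^{-2}\int\|y\|_{H_n}^2\,d\nu_n=O(R^{-2})$; letting first $R\to\infty$ with $T$ fixed, then $T\to\infty$, both summands vanish uniformly in $n$. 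A standard $3\varepsilon$-argument combining these tail estimates with the bounded convergence on the core completes the proof.
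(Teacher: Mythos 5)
Your proof is correct, but it follows a genuinely different route from the paper's. The paper proves the two one-sided inequalities separately by a purely analytic argument: for each $\varepsilon>0$ it forms the nonnegative, equi-continuous integrands obtained by Young's inequality, roughly
\[
f^{\pm}_{n,\varepsilon}(x,y)=\frac{1}{\varepsilon}\,\|\pi_n(x)\|^2_{H_n}
   +\varepsilon\,\|\pi_n(y)\|^2_{H_n}\pm 2\la\pi_n(x),\pi_n(y)\ra_{H_n},
\]
applies the Fatou-type inequality \eqref{fatext1} for variable integrands along the weakly converging $\Sigma_n$, cancels the $\frac{1}{\varepsilon}\|x\|^2_{H_n}$ contribution exactly using the convergence with moments of $\mu_n$, bounds the $\varepsilon\|y\|^2_{H_n}$ contribution by $\varepsilon\cdot\sup_n\int\|y\|^2_{H_n}\,d\nu_n$, and then sends $\varepsilon\downarrow 0$. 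This exploits the asymmetric hypotheses in the cleanest possible way: the $\mu_n$-moment converges exactly so it drops out, while the $\nu_n$-moment is only bounded so it is multiplied by a vanishing $\varepsilon$. Your argument instead passes through Skorohod's representation to get an a.s.\ realization of the couplings, establishes pointwise convergence of $\la X_n,Y_n\ra_{H_n}$ via polarization and \eqref{sig}, controls the core by bounded convergence, and handles the two tails by Cauchy--Schwarz plus a double truncation combined with Markov's inequality. The paper's route is shorter and stays entirely within the lower-semicontinuity machinery (\eqref{fatext1}) already set up; your route is longer and imports Skorohod's theorem, but it makes visible in a very concrete way how the missing uniform integrability on the $\nu_n$ side is compensated by transferring mass through the coupling -- which is exactly the point the paper packages into the $\varepsilon$-trick. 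Both are valid; a small aesthetic remark is that your proof, unlike the paper's, simultaneously yields both one-sided bounds without needing to repeat the argument with a sign change.
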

\begin{proof}
We prove the $\liminf$ inequality only, the proof of the other one
being similar. With the notation of \eqref{sig}, we have
$\la \pi_n(x),\pi_n(y)\ra_{H_n}\to\la x,y\ra_H$ as $n\to\infty$
for all $x,\,y\in H$. For all $\varepsilon>0$ the functions
$$
\frac{1}{2\varepsilon}\|\pi_n(x)\|^2_{H_n}+
\frac{\varepsilon}{2} \|\pi_n(y)\|_{H_n}^2+2\la \pi_n(x),\pi_n(y)\ra_{H_n}
$$
are nonnegative, and these functions are equi-continuous in
$H\times H$ by \eqref{bastakappa}. Therefore \eqref{fatext1},
thanks to the convergence assumption on $\mu_n$, gives
$$
\liminf_{n\to\infty}\int_{H\times H}
\frac{\varepsilon}{2} \|\pi_n(y)\|_{H_n}^2+2\la\pi_n(x),\pi_n(y)\ra_{H_n}\,d\Sigma_n
\geq\int_{H\times H}
\frac{\varepsilon}{2} \|y\|_H^2+2\la x,y\ra_{H}\,d\Sigma.
$$
Using the boundedness assumption on $(\nu_n)$ we can obtain
the $\liminf$ inequality letting $\varepsilon\downarrow 0$.
\end{proof}

In the proof of Theorem \ref{stabflows} we need some continuity/lower
semicontinuity properties of the Wasserstein distance.

 \begin{lemma}\label{gW}
 Let $\mu_n,\,\nu_n\in\ProbabilitiesTwo{H_n}$ be such that
 $\mu_n\to\mu\in\ProbabilitiesTwo{H}$,
 $\nu_n\to\nu\in\ProbabilitiesTwo{H}$ weakly in $\Probabilities{H}$. Then:
 \begin{itemize}
 \item[(i)] $W_2(\mu,\nu)\leq\liminf\limits_{n\to\infty} W_{2,\Hn}(\mu_n,\nu_n)$;
 \item[(ii)] if $\mu_n\to\mu$ and $\nu_n\to\nu$ in $\ProbabilitiesTwo{H}$, then
  $W_{2,H_n}(\mu_n,\nu_n)\to W_2(\mu,\nu)$.
 \end{itemize}
\end{lemma}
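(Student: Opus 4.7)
My plan is to prove (i) by a tightness-and-weak-limit argument tailored to the varying norms, then deduce (ii) by combining (i) with a matching upper bound built from $W_2$-optimal couplings and the polarization identity.

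For (i), I would take optimal couplings $\Sigma_n\in\Gamma_{H_n,o}(\mu_n,\nu_n)$. Since the marginals $(\mu_n),(\nu_n)$ are tight in $\Probabilities{H}$, so is $(\Sigma_n)$ in $\Probabilities{H\times H}$; extract a weakly convergent subsequence $\Sigma_n\rightharpoonup\Sigma$, which lies in $\Gamma(\mu,\nu)$ by continuity of the marginal projections. The main obstacle is the $n$-dependence of the integrand. To bypass it, introduce the auxiliary functions $f_n(x,y):=\|\pi_n(x-y)\|_{H_n}^2$ on $H\times H$: by Assumption~\ref{h_n} they are equi-Lipschitz on bounded sets, by \eqref{sig} they converge pointwise to $\|x-y\|_H^2$, and since $\Sigma_n$ is concentrated on $H_n\times H_n$ (where $\pi_n$ is the identity) one has $f_n=\|x-y\|_{H_n}^2$ $\Sigma_n$-a.e. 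Truncating at height $R$ to get a bounded, equi-Lipschitz family $f_n\wedge R$, applying \eqref{fatext1}, and then letting $R\uparrow\infty$ by monotone convergence yields
\[
\liminf_{n\to\infty}\int\|x-y\|_{H_n}^2\,d\Sigma_n\;\ge\;\int\|x-y\|_H^2\,d\Sigma\;\ge\;W_2^2(\mu,\nu).
\]

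For (ii) part (i) already provides the liminf direction, so I only need to establish the matching upper bound. I would choose $\tilde\Sigma_n\in\Gamma_o(\mu_n,\nu_n)$ optimal for the \emph{fixed} distance $W_2$ induced by $\|\cdot\|_H$; by the standard stability of optimal couplings under convergence in $\ProbabilitiesTwo{H}$, up to a subsequence $\tilde\Sigma_n\rightharpoonup\Sigma\in\Gamma_o(\mu,\nu)$ weakly. Expanding the varying cost by polarization,
\[
\int\|x-y\|_{H_n}^2\,d\tilde\Sigma_n=\int\|x\|_{H_n}^2\,d\mu_n+\int\|y\|_{H_n}^2\,d\nu_n-2\int\la x,y\ra_{H_n}\,d\tilde\Sigma_n,
\]
Lemma~\ref{lprodd} (equivalence of $\ProbabilitiesTwo H$-convergence with convergence with moments) handles the two marginal terms, whose limits are $\int\|x\|_H^2\,d\mu$ and $\int\|y\|_H^2\,d\nu$, while Lemma~\ref{lproddd} controls the inner-product term, giving $\int\la x,y\ra_{H_n}\,d\tilde\Sigma_n\to\int\la x,y\ra_H\,d\Sigma$. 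Summing and using the suboptimality bound $W_{2,H_n}^2(\mu_n,\nu_n)\le\int\|x-y\|_{H_n}^2\,d\tilde\Sigma_n$,
\[
\limsup_{n\to\infty}W_{2,H_n}^2(\mu_n,\nu_n)\;\le\;\int\|x-y\|_H^2\,d\Sigma\;=\;W_2^2(\mu,\nu).
\]

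The conceptual obstacle in both parts is the $n$-dependence of the cost $\|\cdot\|_{H_n}^2$, which rules out the classical lower semicontinuity/continuity statements for $W_2$. In (i) this is handled by truncation combined with the varying-measure Fatou inequality \eqref{fatext1}; in (ii) the polarization identity reduces the varying cost to marginal second moments and an inner-product integral, each of which is already governed by Lemmas~\ref{lprodd} and~\ref{lproddd}.
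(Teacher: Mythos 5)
Your part (i) is essentially the paper's argument: optimal couplings $\Sigma_n\in\Gamma_{H_n,o}(\mu_n,\nu_n)$, tightness, a weak limit $\Sigma\in\Gamma(\mu,\nu)$, and the varying-measure Fatou inequality \eqref{fatext1} applied to the equi-continuous family $\|\pi_n(x-y)\|_{H_n}^2$. Your explicit truncation at level $R$ followed by monotone convergence is a slightly more careful reading of \eqref{fatext1} (whose hypothesis asks for equi-continuity and a uniform lower bound, not boundedness), but the mechanism is identical.

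Part (ii) is correct but takes a genuinely different route. The paper stays with the $H_n$-optimal plans $\Sigma_n\in\Gamma_{H_n,o}(\mu_n,\nu_n)$, shows by the truncation argument from Lemma~\ref{lprodd} that $\int\|x-y\|_{H_n}^2\,d\Sigma_n\to\int\|x-y\|_H^2\,d\Sigma$, and then proves that the weak limit $\Sigma$ is $H$-optimal from scratch via the cyclical monotonicity characterization \eqref{natural}, passing the inequality to the limit along supports. You instead switch to plans $\tilde\Sigma_n\in\Gamma_o(\mu_n,\nu_n)$ that are optimal for the \emph{fixed} $H$-cost, so that the $H$-optimality of the weak limit $\Sigma$ is a direct consequence of the standard stability of $W_2$-optimal couplings in $\ProbabilitiesTwo{H}$; you then control the $H_n$-cost on these (merely $H_n$-suboptimal) plans via the polarization identity, dispatching the marginal second moments with Lemma~\ref{lprodd} and the cross term with Lemma~\ref{lproddd}, and finally close with the suboptimality bound $W_{2,H_n}^2(\mu_n,\nu_n)\le\int\|x-y\|_{H_n}^2\,d\tilde\Sigma_n$ plus part (i). Your version avoids re-deriving optimality through cyclical monotonicity and reuses Lemma~\ref{lproddd}, which the paper proves but applies elsewhere; the paper's version is self-contained in the sense that it does not invoke the stability of optimal plans as a black box, but instead establishes exactly the piece of it that is needed via \eqref{natural}. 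Both are valid; yours is more modular given what the paper has already set up, while the paper's keeps part (ii) independent of the polarization machinery.
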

 \begin{proof} (i) Without loss of generality, we can assume that the
 $\liminf$ is a limit. Let $\Sigma_n\in\Gamma_{H_n,o}(\mu_n,\nu_n)$.
 Notice that tightness of $(\mu_n)$ and $(\nu_n)$ in $H$ implies
 tightness of $(\Sigma_n)$ in $H\times H$. Let
 $\Sigma\in\Gamma(\mu,\nu)$ be a weak limit point of $(\Sigma_n)$,
 which obviously belongs to $\Gamma(\mu,\nu)$.
 Then, taking into account the equi-continuity in $H\times H$ of
 the maps $\|\pi_n(x-y)\|_{H_n}$, ensured by \eqref{bastakappa},
 by \eqref{fatext1} we get:
 \begin{eqnarray*}
  W_2^2(\mu,\nu)&\leq&
  \int_{H\times H}\|y-x\|_H^2\,d\Sigma\leq\liminf_{n\to\infty}
  \int_{H\times H}\|\pi_n(x-y)\|^2_{\Hn }\,d\Sigma_n\\
  &=&\liminf_{n\to\infty}\int_{H_n\times H_n}\|y-x\|_{\Hn}^2\,d\Sigma_n
  =\liminf_{n\to\infty} W_{2,\Hn}^2(\mu_n,\nu_n).
  \end{eqnarray*}
  (ii) We choose optimal couplings
  $\Sigma_n$ between $\mu_n$ and $\nu_n$, relative to $H_n$,
  and prove that any weak limit $\Sigma$
  (which exists, possibly passing to subsequences) is optimal.
  The same truncation argument used in Lemma~\ref{lprodd} to
  show that convergence in $\ProbabilitiesTwo{H}$ implies convergence with moments
  shows that
  $$
  \lim_{n\to\infty}
  \int_{\Hn \times \Hn }\|y-x\|_{\Hn }^2\,d\Sigma_n
  =\int_{H\times H}\|y-x\|_{H}^2\,d\Sigma.
  $$
  In order to prove the optimality of $\Sigma$
  we recall that $\Sigma\in\Gamma(\lambda,\nu)$ is
  an optimal coupling (relative to the cost $c(x,y)=\|x-y\|^2_H$)
  if and only if for any $\ell\in\N$, any
  $(x_i,y_i)_{i=1,\ldots,\ell}$ in the support of $\Sigma$ and
  any permutation $\sigma$ of $\{1,\ldots,\ell\}$ the following
  inequality holds:
  \begin{equation}\label{natural}
  \sum_{i=1}^\ell \|x_i-y_{\sigma(i)}\|^2_H  \, \geq \,
  \sum_{i=1}^\ell \|x_i-y_i\|^2_H,
  \end{equation}
  see for instance \cite[Theorem 6.1.4]{ags}.
  Since $\Sigma_n$ is optimal, a similar inequality holds
  with $\|\cdot\|_{\Hn }$ instead of $\|\cdot\|_H$ for
  all $(x_i^n,y_i^n)_{i=1,\ldots,\ell}$ in the support of $\Sigma_n$. Since
  $\Sigma_n$ converge to $\Sigma$ weakly, for any
  $(x_i,y_i)_{i=1,\ldots,\ell}$ in the support of $\Sigma$
  there exist $(x_i^n,y_i^n)_{i=1,\ldots,\ell}$ in the support of $\Sigma_n$
  with $(x_i^n,y_i^n)\to(x_i,y_i)$ in $H\times H$. Then
  \eqref{natural} follows taking limits as $n\to\infty$ and using
  the fact that $z_n\in H_n$ and $\|z_n-z\|_H\to 0$ implies
  $\|z_n\|_{H_n}\to\|z\|_H$.
 \end{proof}

We can now prove Theorem~\ref{stabflows}. With no loss of generality
we can assume (possibly making translations) that $A(\gamma_n)=H_n$.

\begin{proof}
Set $F_n(\cdot):=\RelativeEntropy{\cdot}{\gamma_n}$.
We consider the case when $F_n(\bar\mu^n)$ is bounded first. In this
case, property (i) in Theorem~\ref{tgflow} and \eqref{bastakappa}
ensure the uniform (in time, and with
respect to $n$) estimate $W_2^2(\mu^n_t,\Semitn{\bar\mu^n}{t}{\tau})\leq C\tau$.
Here $\Semitn{\bar\mu^n}{t}{\tau}$ is the discrete approximation \eqref{recu}
of the gradient flow, obtained
by the recursive minimization scheme \eqref{euler15}: i.e. we define recursively
$\mu^{n,0}_\tau:=\bar\mu^n$, $\mu^{n,k+1}_\tau$ is the unique minimizer of
\[
\ProbabilitiesTwo{H_n} \, \ni \, \nu \, \mapsto \,
F_n(\nu)+\frac{1}{2\tau}W_{2,H_n}^2(\nu,\mu^{n,k}_\tau),
\]
and we define $\Semitn{\bar\mu^n}{t}{\tau}:=\mu^{n,k+1}_\tau$ for all
$t\in (k\tau,(k+1)\tau]$. Therefore, taking also Lemma~\ref{lprodd}
into account, in this case it suffices to
show that, with $\tau>0$ \emph{fixed}, the convergence with
moments is preserved by the minimization scheme.
So, let us assume that $\mu_n$ converge to $\mu$ with moments
and $F_n(\mu_n)$ is bounded; we consider
the minimizers $\nu_n$ of the problems
\[
\ProbabilitiesTwo{H_n} \, \ni \, \sigma \, \mapsto \,
F_n(\sigma)+\frac{1}{2\tau}W_{2,H_n}^2(\sigma,\mu_n),
\]
and show that they converge with moments to the minimizer $\nu$
of the problem
\begin{equation}\label{minimi}
\ProbabilitiesTwo{H} \, \ni \, \sigma \, \mapsto \,
F(\sigma)+\frac{1}{2\tau}W_{2}^2(\sigma,\mu).
\end{equation}
Notice first we can use $\lambda=\mu_n$ in the inequality
\begin{equation}\label{mino}
F_n(\nu_n)+\frac{1}{2\tau}W_{2,H_n}^2(\nu_n,\mu_n)\leq
F_n(\lambda)+\frac{1}{2\tau}W_{2,H_n}^2(\lambda,\mu_n)
\end{equation}
to obtain that both $\int_{H_n}\|y\|^2_{H_n}\,d\nu_n$
and $F_n(\nu_n)$ are bounded. Since $(\gamma_n)$ is tight
and $\RelativeEntropy{\nu_n}{\gamma_n}$ is bounded,
then $(\nu_n)$ is tight as well, by the entropy inequality \eqref{entrine}.
Therefore $(\nu_n)$ has limit points with respect to the weak convergence.
We will prove that any limit point is a minimizer of
\eqref{minimi}, so that it must be $\nu$. \\
Let $\nu'=\lim_k\nu_{n(k)}$ in the weak convergence,
let $\lambda\in\ProbabilitiesTwo{H}$ and let $\lambda_k$ be converging to
$\lambda$ in $\ProbabilitiesTwo{H}$, with $\limsup_k F_{n(k)}(\lambda_k)\leq F(\lambda)$,
whose existence is ensured by condition (ii) in the definition of $\Gamma$-convergence.
Setting $\lambda=\lambda_k$, $n=n(k)$ in \eqref{mino}, and using also condition (i) in the
definition of $\Gamma$-convergence to bound $F_{n(k)}(\nu_{n(k)})$ from below, we get
from (i) and (ii) of Lemma~\ref{gW}
\begin{eqnarray}\label{minobis}
F(\nu')+\frac{1}{2\tau}W_2^2(\nu',\mu)&\leq&
\limsup_{k\to\infty} \left[F_{n(k)}(\nu_{n(k)})+
\frac{1}{2\tau}W^2_{2}(\nu_{n(k)},\mu_{n(k)})\right]
\nonumber\\
&\leq&
\limsup_{k\to\infty} \left[F_{n(k)}(\lambda_k)+
\frac{1}{2\tau}W_{2}^2(\lambda_k,\mu_{n(k)})\right]\nonumber \\
&\leq&
F(\lambda)+\frac{1}{2\tau}W_2^2(\lambda,\nu).
\end{eqnarray}
As $\lambda$ is arbitrary, this proves that $\nu'$ is a minimizer, therefore $\nu'=\nu$.

Now, setting $\lambda=\mu$ in \eqref{minobis}, we obtain that all
inequalities must be equalities, so that $\lim_kW_{2}^2(\mu_{n(k)},\nu_{n(k)})=W_2^2(\mu,\nu)$.
Indeed, if $\limsup_k(a_k+b_k)\leq a+b$, $\liminf_k a_k\geq a$
and $\liminf_k b_k\geq b$, then $\lim_k a_k=a$ and $\lim_k b_k=b$.

We shall denote in the sequel by $\Sigma_n$ optimal couplings between $\mu_n$ and $\nu_n$.
Let $\Sigma\in\Gamma(\mu,\nu)$ a limit point in the weak convergence
of $\Sigma_n$, and assume just for notational simplicity that the whole
sequence $\Sigma_n$ weakly converges to $\Sigma$. By \eqref{fatext1}
we get
$$
\int_{H\times H}\|x-y\|_H^2\,d\Sigma \, \leq \,
\liminf_{n\to\infty}\int_{H\times H}\|x-y\|^2_{H_n}\,d\Sigma_n= W_2^2(\mu,\nu),
$$
therefore $\Sigma\in\Gamma_o(\mu,\nu)$. We can now apply Lemma~\ref{lproddd}
to obtain that
$\int_{H_n\times H_n}\la x,y\ra_{H_n}\,d\Sigma_n\to\int_{H\times H}\la x,y\ra\,d\Sigma$;
from the identity
$$
W_2^2(\mu,\nu)=\int_H\|x\|^2_{H}\,d\mu+
\int_H\|y\|^2_H\,d\nu-2\int_{H\times H}\la x,y\ra_H\,d\Sigma,
$$
and from the analogous one with the Hilbert spaces $H_n$ we obtain that
$\nu_n$ converge with moments to $\nu$.

In the general case when $F_n(\bar\mu_n)$ is not bounded we can find, for any
$\varepsilon>0$, $\bar\nu\in D(F)$ with $W_2(\bar\mu,\bar\nu)<\varepsilon$.
By the definition of $\Gamma$-convergence we can also find $\bar\nu^n$ converging
to $\bar\nu$ in $\ProbabilitiesTwo{H}$ with $\limsup_n F_n(\bar\nu_n)\leq F(\bar\nu)$. For
$n$ large enough we still have $W_{2,H_n}(\bar\mu^n,\bar\nu^n)<\varepsilon$, so that
the contracting property of gradient flows (see Theorem~\ref{tgflow} (ii)) gives
\[
\begin{split}
& W_2(\Semi{\bar\mu^n}{t},\Semi{\bar\nu^n}{t})+
W_2(\Semi{\bar\mu}{t},\Semi{\bar\nu}{t}) \\ &  \leq \, \kappa
W_{2,H_n}(\Semi{\bar\mu^n}{t},\Semi{\bar\nu^n}{t})+
W_2(\Semi{\bar\mu}{t},\Semi{\bar\nu}{t}) <(\kappa+1)\varepsilon,
\qquad\forall t\geq 0.
\end{split}
\]
By applying the local uniform convergence property to $\bar\nu^n$ we get
$$
\limsup_{n\to\infty}\sup_{t\in [0,T]}
W_2(\Semi{\bar\mu^n}{t},\Semi{\bar\mu}{t})\leq (\kappa+1)\varepsilon
\qquad\forall T>0.
$$
\end{proof}

\section{Wasserstein semigroup and Dirichlet forms}
\label{dirichlet}

In this section we establish a general link between the Wassertein
semigroups and the semigroups arising from natural ``gradient''
Dirichlet forms, extending Proposition \ref{prop:Fkpre} to the
general case of a log-concave measure $\gamma$ in $H$. We denote by
$K$ the support of $\gamma$ (a closed convex set, coinciding with
$\overline{\{V<+\infty\}}$ when $H=\R^k$ and
$\gamma=\exp(-V)\Leb{k}$) and, without a real loss of generality, we
consider the case when
\begin{equation}\label{de1}
A(\gamma) \ = \ H^0(\gamma) \ = \ H.
\end{equation}
We consider, recalling \eqref{defah}, the bilinear form
\begin{equation}\label{cami4}
\cE_\gamma(u,v) \, := \, \int_H \la\nabla u,\nabla v\ra \, d\gamma,
\qquad u,\,v\in C^1_b(H).
\end{equation}
Accordingly, we define the induced scalar product and norm on $C^1_b(H)$:
\begin{equation}\label{scpr}
\cE_{\gamma,1}(u,v) \, := \, \int_H u \, v \, d\gamma \, + \,
\cE_\gamma(u,v), \qquad \|u\|_{\cE_{\gamma,1}} \, := \,
\sqrt{\cE_{\gamma,1}(u,u) }.
\end{equation}
We start proving that $\cE_\gamma$ is closable.
We recall that closability means the following:
for all sequences $(u_n)\subset C^1_b(H)$ which are Cauchy
with respect to $\|\cdot\|_{\cE_{\gamma,1}}$ and such that $u_n\to 0$
in $L^2(\gamma)$, we have $\|u_n\|_{\cE_{\gamma,1}}\to 0$. This is equivalent
to saying that the operator $\nabla:C^1_b(H)\mapsto L^2(\gamma;H)$ is
closable in $L^2(\gamma)$.
\begin{lemma}[Closability]\label{closability}
The bilinear form $(\cE_\gamma,C^1_b(H))$ is closable in $L^2(\gamma)$.
\end{lemma}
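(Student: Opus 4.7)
The plan is to reduce closability to the one-dimensional case via directional derivatives. By the inequality $|\langle\nabla u, h\rangle| \le \|h\|\,\|\nabla u\|$, if $(u_n)\subset C^1_b(H)$ satisfies $u_n\to 0$ in $L^2(\gamma)$ and $\nabla u_n\to F$ in $L^2(\gamma;H)$, then for each $h\in H$ the directional derivative $D_h u_n := \langle\nabla u_n,h\rangle$ converges to $\langle F,h\rangle$ in $L^2(\gamma)$. So it suffices to prove that $D_h:C^1_b(H)\to L^2(\gamma)$ is closable for every unit $h\in H$: applying this to a countable orthonormal basis $\{e_k\}$ of $H$ (separable) one obtains $\langle F,e_k\rangle=0$ $\gamma$-a.e.\ for every $k$, hence $F=0$ $\gamma$-a.e.

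Fix a unit vector $h\in H$, set $H_h:=h^\perp$ and let $\pi_h:H\to H_h$ denote the orthogonal projection. By the disintegration theorem on Polish spaces we write
\[
\gamma = \int_{H_h}\hat\gamma_y\,d\nu(y), \qquad \nu := (\pi_h)_\#\gamma,
\]
where each $\hat\gamma_y$ is identified with a probability measure on $\R$ via the parametrization $t\mapsto y+th$. The crucial claim is that $\hat\gamma_y$ is log-concave on $\R$ for $\nu$-a.e.\ $y$: this follows by applying Proposition~\ref{charalog} to the finite-dimensional projections of $\gamma$ onto the planes $\R h\oplus V$ with $V\subset H_h$ finite-dimensional (which are log-concave on $\R\times V$), and invoking the classical Borell--Pr\'ekopa result that one-dimensional conditionals of finite-dimensional log-concave measures are again log-concave. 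Given now $u_n\in C^1_b(H)$ with $u_n\to 0$ and $D_h u_n\to G$ in $L^2(\gamma)$, Fubini together with a standard subsequence extraction produces, for $\nu$-a.e.\ $y$, functions $\phi^y_n(t):=u_n(y+th)\in C^1_b(\R)$ with $\phi^y_n\to 0$ in $L^2(\hat\gamma_y)$ and $(\phi^y_n)'\to G(y+\cdot h)$ in $L^2(\hat\gamma_y)$.

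It remains to prove closability of $d/dt$ in $L^2(\hat\gamma)$ for an arbitrary log-concave probability $\hat\gamma$ on $\R$. If $\hat\gamma$ is a Dirac mass the claim is trivial; otherwise Proposition~\ref{charalog} gives $\hat\gamma = e^{-V}\Leb{1}$ on an interval $I$ with $V$ convex and lower semicontinuous, hence locally Lipschitz on $\operatorname{int}(I)$ with $V'\in L^\infty_{\mathrm{loc}}(\operatorname{int}(I))$. For any test function $\eta\in C_c^1(\operatorname{int}(I))$, integration by parts yields
\[
\int_\R \phi_n'\,\eta \, d\hat\gamma \;=\; -\int_{\operatorname{int}(I)} \phi_n\,(\eta'-\eta V')e^{-V}\,dt,
\]
which tends to $0$ because the integrand on the right is bounded with compact support in $\operatorname{int}(I)$, while $\phi_n\to 0$ in $L^2(\operatorname{supp}(\eta),\Leb{1})$ (since $e^{-V}$ is bounded below there). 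Passing to the limit gives $\int\psi\eta\,d\hat\gamma = 0$ for every such $\eta$, and hence $\psi=0$ $\hat\gamma$-a.e., which is the desired one-dimensional closability. The main technical point in the whole argument is the log-concavity of the one-dimensional conditional measures $\hat\gamma_y$; once this is granted, the rest is a clean reduction to a one-dimensional integration-by-parts computation exploiting the locally Lipschitz character of convex log-densities.
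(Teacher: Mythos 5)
Your approach (reduce closability of $\nabla$ to closability of the directional derivatives $D_h$, then disintegrate $\gamma$ along $h$ and invoke a one-dimensional integration-by-parts argument against a locally Lipschitz convex potential) is genuinely different from the paper's. The paper instead characterizes $\sqrt{\cE_\gamma(u,u)}$ variationally as the metric slope of the relative entropy at $u^2\gamma$ (Lemma~\ref{slopeentro}) and deduces $L^2$-lower-semicontinuity of the energy from lower semicontinuity of $\RelativeEntropy{\cdot}{\gamma}$ and continuity of $W_2$; this avoids any disintegration or integration by parts. In finite dimensions your argument is the classical one and is essentially correct (the one-dimensional step, with $V$ locally Lipschitz on the interior of $\{V<+\infty\}$ and the density bounded below on compacta of the interior, is fine; and the Dirac conditionals occur only on a $\nu$-null set because $\gamma\ll\Leb{k}$).

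In infinite dimensions, however, there is a genuine gap, and it is exactly at the sentence ``If $\hat\gamma$ is a Dirac mass the claim is trivial.'' The claim there is actually \emph{false}: on $L^2(\delta_{t_0})$ the operator $d/dt$ is not closable, since $\phi(t)=t-t_0$ vanishes in $L^2(\delta_{t_0})$ while $\phi'\equiv 1$. More seriously, in infinite dimensions the Dirac case really does occur with full $\nu$-measure, and in fact the very sub-claim you set out to prove --- that $D_h:C^1_b(H)\to L^2(\gamma)$ is closable for \emph{every} unit $h$ --- is false. Take $\gamma=N(0,Q)$ non-degenerate with $Q$ trace-class and $h\notin QH$ (such $h$ form a dense set). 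Then $h^\perp$ is dense in $H$ for the weak covariance seminorm $\|z\|_Q^2=\langle Qz,z\rangle$, so one can pick $\eta_n\in h^\perp$ with $\|h-\eta_n\|_Q\to 0$. Setting $u_n(x):=\arctan\langle x,h-\eta_n\rangle\in C^1_b(H)$, the random variables $\langle X,h-\eta_n\rangle$ have variance $\|h-\eta_n\|_Q^2\to 0$, hence $u_n\to 0$ in $L^2(\gamma)$, while
\[
D_h u_n(x)=\frac{\langle h,h-\eta_n\rangle}{1+\langle x,h-\eta_n\rangle^2}=\frac{1}{1+\langle x,h-\eta_n\rangle^2}\longrightarrow 1\quad\text{in }L^2(\gamma),
\]
so $D_h$ is not closable. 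Equivalently, for such $h$ the conditional laws $\hat\gamma_y$ along the fibers $y+\R h$ are Dirac for $\nu$-a.e.\ $y$, which is precisely the case your one-dimensional argument cannot handle. Note that $\cE_\gamma$ itself \emph{is} closable here (e.g.\ via the product structure in an eigenbasis of $Q$), so the problem is not with the statement but with your reduction: closability of $\nabla$ does not require closability of every $D_h$, only of $D_{e_k}$ for \emph{some} orthonormal basis $\{e_k\}$, and nothing in your proof identifies a basis along which the conditionals stay non-degenerate for a general log-concave $\gamma$. Repairing the argument would require, at minimum, restricting $h$ to directions where the log-concave conditionals are $\nu$-a.e.\ non-degenerate and showing such directions contain a complete orthonormal system; as written the proof does not go through in infinite dimensions.
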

\begin{proof} Let us denote by ${\rm Cyl}(H)$ the subspace of $C^1_b(H)$ made by
cylindrical functions; by a simple density argument we can assume
that the sequence $(u_n)$ is contained in ${\rm Cyl}(H)$. We claim
that closability follows by the lower semicontinuity of $v\mapsto
\cE_\gamma(v,v)$ on ${\rm Cyl}(H)$: indeed, if this lower
semicontinuity property holds, we can pass to the limit as
$m\to\infty$ in the inequality
$\cE_\gamma(u_n-u_m,u_n-u_m)<\varepsilon$, for $n,\,m\geq
n(\varepsilon)$, to obtain $\cE_\gamma(u_n,u_n)<\varepsilon$ for
$n\geq n(\varepsilon)$, i.e. $\|u_n\|_{\cE_{\gamma,1}}\to 0$.

So, let $(v_n)\subset {\rm Cyl}(H)$ be converging in $L^2(\gamma)$ to $v\in {\rm Cyl}(H)$
and let us prove that the inequality $\liminf_n\cE_\gamma(v_n,v_n)\geq\cE_\gamma(v,v)$
holds.

We show first that we can assume with no loss of generality that
$\gamma\in\ProbabilitiesTwo{H}$, so that
$f\gamma\in\ProbabilitiesTwo{H}$ for all bounded Borel functions
$f$. Indeed, we can approximate $\gamma$ by the log-concave measures
$\gamma_\varepsilon:=\exp(-\varepsilon\|x\|^2)\gamma/Z_\varepsilon\in\ProbabilitiesTwo{H}$,
where $Z_\varepsilon\uparrow 1$ are normalization constants, and use
the fact that $\gamma_\varepsilon\leq\gamma/Z_\varepsilon$ and
$Z_\varepsilon\gamma_\varepsilon\uparrow\gamma$ to obtain the lower
semicontinuity of $\cE_\gamma(v,v)$ from the lower semicontinuity of
all $\cE_{\gamma_\varepsilon}(v,v)$. The log-concavity of
$\gamma_\varepsilon$ can be obtained by approximation: if $(\ee_i)$
is an orthonormal system in $H$, then all measures
$$
\gamma_{\varepsilon,N}:=\frac{1}{Z_{\varepsilon,N}}\exp(-\sum_{i=1}^N\langle x,\ee_i\rangle^2)\gamma
$$
are log-concave because their projections on any finite-dimensional subspace
$L\supset (\ee_1,\ldots,\ee_N)$ have the form
$Z_{\varepsilon,M}^{-1}\exp(-\varepsilon\sum_1^N\langle x,\ee_i\rangle^2-V)$,
where $\exp(-V)$ is the density of $(\pi_L)_\#\gamma$. Therefore Proposition~\ref{charalog}
can be applied.

We can assume, possibly adding and multiplying by constants,
that $m:=\inf v>0$ and $\int v^2\,d\gamma=1$. By a simple truncation argument we can also
assume that $\inf v_n\geq m/2$, $\sup v_n\leq\sup v+1$, and set $w_n=v_n/\|v_n\|_2$; obviously
$w_n\to v$ in $L^2(\gamma)$ and, as a consequence, $w_n^2\gamma\to v^2\gamma$ weakly.
By Lemma~\ref{slopeentro} we get
$$
\RelativeEntropy{\mu}{\gamma}\geq\RelativeEntropy{w_n^2\gamma}{\gamma}-
2\sqrt{\cE_\gamma(w_n,w_n)} \,
W_2(\mu,w_n^2\gamma)\qquad\forall\mu\in\ProbabilitiesTwo{H}.
$$
The uniform upper bound on $w_n$ ensures, taking \eqref{cami2} into
account, that $w_n^2\gamma\to v^2\gamma$ in $\ProbabilitiesTwo{H}$.
Passing to the limit as $n\to\infty$, the lower semicontinuity of
the relative Entropy gives
$$
\RelativeEntropy{\mu}{\gamma}\geq\RelativeEntropy{v^2\gamma}{\gamma}-
2\liminf_{n\to\infty}\sqrt{\cE_\gamma(w_n,w_n)} \,
W_2(\mu,v^2\gamma)\qquad\forall\mu\in\ProbabilitiesTwo{H}.
$$
By applying Lemma~\ref{slopeentro} again we get $\liminf_n\cE_\gamma(w_n,w_n)\geq\cE_\gamma(v,v)$,
and from the definition of $w_n$ we see that the same inequality holds if we replace $w_n$
with $v_n$.
\end{proof}

Being $\cE_\gamma$ closable, we shall denote by $D(\cE_\gamma)$ its domain
(i.e. the closure of $C^1_b(H)$ with respect to the norm $\Vert\cdot\Vert_{\cE_{\gamma,1}}$),
which obviously can be identified with a subset of $L^2(\gamma)$,
and keep the notation $\cE_\gamma$ for the extension of $\cE_\gamma$ to
$D(\cE_\gamma)\times D(\cE_\gamma)$.
In the next lemma we show that $D(\cE_\gamma)$ contains ${\rm Lip}_b(K)$ and some
useful representation formulas for the extension.

Recall that a \emph{finite signed measure} is a $\R$-valued set function
defined on Borel sets that can be written as the difference of two positive finite
measures; by Hahn decomposition, any such measure $\mu$ can be uniquely written as
$\mu=\mu^+-\mu^-$, with $\mu^\pm$ nonnegative and $\mu^+\perp\mu^-$. The
\emph{total variation} $|\mu|$ is the finite measure defined by $\mu^++\mu^-$.

\begin{lemma}[$\cE_\gamma$ is a Dirichlet form]\label{lipb}
$\cE_\gamma$ is a Dirichlet form, ${\rm Lip}_b(K)\subset D(\cE_\gamma)$  and
\begin{equation}\label{Markov}
\sqrt{\cE_\gamma(u,u)}\leq [u]_{\rm Lip(K)}\qquad\forall u\in {\rm Lip}_b(K).
\end{equation}
Moreover, the following properties hold:
\begin{itemize}
\item[(i)] if $H$ is finite-dimensional, $h\in H$ and $\ell_h(x)=\langle h,x\rangle$,
there exists a finite signed measure $\Sigma_h$ in $H$ supported on $K$ such that
\begin{equation}\label{ibpr}
\cE_\gamma(u,\ell_h) = \int_H u \, d\Sigma_h \qquad\forall u\in{\rm Lip}_b(K);
\end{equation}
\item[(ii)] if $\pi:H\to L$ is a finite-dimensional orthogonal projection, then
\begin{equation}\label{iprbis}
\cE_\gamma(u\circ\pi,v\circ\pi)=\cE_{\pi_\#\gamma}(u,v)
\qquad\forall u,\,v\in {\rm Lip}_b(L).
\end{equation}
\end{itemize}
\end{lemma}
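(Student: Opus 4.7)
The plan is to verify the four assertions in turn, making use of the closability established in Lemma~\ref{closability} together with standard approximation techniques.

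For the inclusion ${\rm Lip}_b(K)\subset D(\cE_\gamma)$ with \eqref{Markov}, I would McShane-extend $u$ to $\tilde u\in {\rm Lip}_b(H)$ with $[\tilde u]_{\rm Lip}=L:=[u]_{\rm Lip(K)}$, and then approximate $\tilde u$ by cylindrical $C^1_b$ functions. Fixing an orthonormal system $(\ee_i)$ of $H$ and letting $\pi_n$ denote the projection onto its first $n$ elements, $u_n\in C^1_b(H)$ is obtained by mollifying $\tilde u\circ\pi_n$ in the $n$ coordinates at bandwidth $\epsilon_n\downarrow 0$; then $|\nabla u_n|\le L$ pointwise and $u_n\to u$ in $L^2(\gamma)$ by dominated convergence. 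Since $(\nabla u_n)$ is $L$-bounded in $L^2(\gamma;H)$, it admits a weak cluster point $g$ with $\|g\|_{L^2(\gamma;H)}\le L$; Mazur's lemma provides convex combinations of the $u_n$ still converging to $u$ in $L^2(\gamma)$ but with gradients converging to $g$ strongly, so by closability $u\in D(\cE_\gamma)$ and $\cE_\gamma(u,u)=\|g\|^2\le L^2$.

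The Markov property (which together with symmetry and closedness makes $\cE_\gamma$ a Dirichlet form) comes from the chain rule: for smooth $\phi:\R\to\R$ with $\phi(0)=0$ and $|\phi'|\le 1$ and $u\in C^1_b(H)$, one has $|\nabla(\phi\circ u)|^2=|\phi'(u)|^2|\nabla u|^2\le|\nabla u|^2$ pointwise, hence $\cE_\gamma(\phi\circ u,\phi\circ u)\le\cE_\gamma(u,u)$. An arbitrary $1$-Lipschitz $\phi$ with $\phi(0)=0$ is handled by mollifying $\phi$, and $u\in D(\cE_\gamma)$ by form-norm approximation together with the lower semicontinuity of $\cE_\gamma$ under strong $L^2(\gamma)$ convergence.

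For (i), we work in $H=\R^k$ and invoke Proposition~\ref{charalog} to write $\gamma=e^{-V}\Leb{k}$ with $V$ convex, proper and lsc. For $u\in C^\infty_c(\R^k)$, I would disintegrate $\gamma=\int_{h^\perp}\gamma_y\,d\mu(y)$ along fibers of the orthogonal projection onto $h^\perp$; each $\gamma_y$ is a log-concave probability measure on $y+\R h$ with density $\rho_y$, and fiberwise integration by parts gives
\[
\cE_\gamma(u,\ell_h)=\int_{\R^k}\partial_h u\,d\gamma=-\int_{h^\perp}\Big(\int_\R u(y+th)\,d(\partial_t\gamma_y)(t)\Big)\,d\mu(y)=\int u\,d\Sigma_h,
\]
with $\Sigma_h$ characterized by this disintegration. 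The main obstacle is proving that $\Sigma_h$ has finite total variation: since each $\rho_y$ is unimodal, $|\partial_t\gamma_y|(\R)=2\sup\rho_y$, which, combined with the disintegration, yields $|\Sigma_h|(\R^k)=2\int_{h^\perp}e^{-W(y)}\,dy$ where $W(y):=\min_t V(y+th)$. The function $W$ is convex and its sublevel sets are the projections on $h^\perp$ of the sublevel sets of $V$, which must be bounded in every direction because $\gamma$ is a probability measure; hence $e^{-W}$ is log-concave with bounded sublevel sets on $h^\perp$, so it decays exponentially and is integrable. Extension of \eqref{ibpr} from $C^\infty_c$ to $C^1_b$ and then to ${\rm Lip}_b(K)$ proceeds via cutoff plus mollification in the first step (using $\Sigma_h(\R^k)=0$, which cancels boundary contributions from additive constants) and via part (a) in the second.

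Finally, (ii) is immediate for $u,v\in C^1_b(L)$: since $\pi$ is an orthogonal projection, $\pi^*$ coincides with the inclusion $L\hookrightarrow H$ and $\nabla(u\circ\pi)(x)=\nabla u(\pi x)$, so the change of variables yields
\[
\cE_\gamma(u\circ\pi,v\circ\pi)=\int_H\la\nabla u(\pi x),\nabla v(\pi x)\ra\,d\gamma(x)=\int_L\la\nabla u,\nabla v\ra\,d(\pi_\#\gamma)=\cE_{\pi_\#\gamma}(u,v).
\]
To extend to ${\rm Lip}_b(L)$, one approximates $u,v$ in the form norm of $\cE_{\pi_\#\gamma}$ by $C^1_b(L)$ functions (via part (a) applied to the log-concave measure $\pi_\#\gamma$) and notes that the identity above exhibits the pullback $w\mapsto w\circ\pi$ as an isometric embedding between Dirichlet norms, so the identity passes to the limit.
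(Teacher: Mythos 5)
Your proposal is correct and follows essentially the same route as the paper. For the inclusion ${\rm Lip}_b(K)\subset D(\cE_\gamma)$ and \eqref{Markov} you extend, mollify cylindrically, and close via Mazur plus closability, exactly as the paper does; the contraction/Markov property via the chain rule and lower semicontinuity of the closed form is also the paper's argument. For (ii) the change of variables for $C^1_b(L)$ and the approximation step are the paper's argument verbatim. The only structural difference is in (i): the paper defers the existence and finiteness of $\Sigma_h$ to a separate statement (Proposition~\ref{pzamb1}), proved by a one-dimensional BV/slicing argument plus Fubini and a commutation of total variation with integral, and establishes integrability of $e^{-W}$ by citing the (at least) linear growth of a convex $V$ with $\int e^{-V}<\infty$; you instead argue inline via disintegration and deduce integrability of $e^{-W}$ from the boundedness of sublevel sets of $V$ (a convex set of finite Lebesgue measure being bounded). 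Both chains are valid and arrive at the same total-variation identity $|\Sigma_h|(\R^k)=2\int_{h^\perp}e^{-\min_t V(y+th)}\,dy$.
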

\begin{proof}
Let $u\in{\rm Lip}_b(K)$ and let $\tilde u$ be a bounded Lipschitz extension
of $u$ to the whole of $H$. Combining finite-dimensional approximation and smoothing,
we can easily find a sequence $(u_n)\subset C^1_b(H)$
converging to $\tilde u$ pointwise and with $[u_n]_{{\rm Lip}(H)}$ bounded. It follows
that $u_n\to\tilde u$ in $L^2(\gamma)$ and, possibly extracting a subsequence,
$\nabla u_n\to U$ weakly in $L^2(\gamma;H)$. Then, a sequence $(g_n)$ of
convex combinations of $u_n$ still converges to $\tilde u$ in $L^2(\gamma;H)$ and
is Cauchy with respect to $\Vert\cdot\Vert_{\cE_{\gamma,1}}$. It follows that
$u\in D(\cE_\gamma)$.
A similar argument proves \eqref{Markov} and the fact that $\cE_\gamma(\phi(u),\phi(u))$
is less than $\cE_\gamma(u,u)$ whenever $u\in D(\cE_\gamma)$ and $\phi:\R\to\R$ is $1$-Lipschitz.
This last property shows that $\cE_\gamma$ is a Dirichlet form.

Now, let $H=\R^k$, so that $\gamma=\exp(-V)\Leb{k}$, and let us prove (i).
By the closability of $\cE_\gamma$, we need only to prove that
\begin{equation}\label{ibpri}
\int_{\R^k}\langle\nabla u,h\rangle\,d\gamma=\int_{\R^k}u\,d\Sigma_h
\qquad\forall u\in C^1_b(\R^k)
\end{equation}
for some finite signed measure $\Sigma_h$. The existence of such a measure $\Sigma_h$
(obvious in the case when $\nabla V$ is Lipschitz, as an integration
by parts gives $\Sigma_h=-\langle\nabla V,h\rangle\gamma$) is ensured
by Proposition~\ref{pzamb1}.

Finally, notice that \eqref{iprbis} trivially holds by the definitions of
$\cE_\gamma$ and $\cE_{\pi_\#\gamma}$ when $u\in C^1_b(L)$, because
$u\circ\pi\in C^1_b(H)$. By approximation the equality extends to the
case $u,\,v\in {\rm Lip}_b(L)$.
\end{proof}

By the previous lemma, there exists a unique contraction semigroup
$P_t$ in $L^2(\gamma)$ associated to $\cE_\gamma$. We are now going
to compare it with the Wasserstein semigroup $\Semi{\mu}{t}$ of
Theorem~\ref{tgflow}, and we shall denote in the sequel
$\nu_t^x:=\Semi{\delta_x}{t}$.

\begin{theorem}\label{nonsmooth}
The semigroup $P_t$ is regularizing from $L^\infty(\gamma)$ to $C_b(K)$,
and the identity
\begin{equation}\label{rdi1}
P_t f(x):=\int_H f\,d\nu_t^x,\qquad t>0,\,\,f\in
L^\infty(\gamma),\,\, x\in K
\end{equation}
provides a continuous version of $P_t f$. In addition, $P_t$ acts on
${\rm Lip}_b(K)$:
\begin{equation}\label{felle}
[P_t f]_{{\rm Lip}(K)}\leq [f]_{{\rm Lip}(K)}\qquad \ t\geq 0,
\quad\forall f\in {\rm Lip}_b(K).
\end{equation}
Moreover, for any $\mu\in\ProbabilitiesTwo{K}$, we have the identity
\begin{equation}\label{rdi12}
\Semi{\mu}{t} = \int \nu_t^x \, d\mu(x), \qquad \forall \, t\geq 0.
\end{equation}
\end{theorem}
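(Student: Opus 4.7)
The plan is to reduce the statement to the smooth finite-dimensional framework of Section~\ref{tuttolega} via the stability Theorem~\ref{stabflows}, and then pass to the limit. Throughout, I will write $\tilde P_t f(x):=\int f\,d\nu_t^x$ for the candidate continuous version and verify successively that $\tilde P_t$ is well-defined on $C_b(K)$, satisfies \eqref{felle}, the representation \eqref{rdi12}, and finally coincides with the Dirichlet semigroup $P_t$ on $L^2(\gamma)$.

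First I would approximate $\gamma$ by measures $\gamma_n=e^{-V_n}\Leb{k_n}$ on finite-dimensional subspaces $H_n\uparrow H$ with $V_n$ smooth, convex and $\nabla V_n$ bounded and Lipschitz. Using Proposition~\ref{charalog} one projects onto $H_n:=\mathrm{span}(e_1,\dots,e_n)$ and then regularizes the potential (e.g.\ convolution with a small Gaussian plus Moreau--Yosida smoothing) so as to keep log-concavity and preserve weak convergence $\gamma_n\to\gamma$; Assumption~\ref{h_n} is immediate since the $H_n$-norms coincide with $\|\cdot\|_H$ restricted to $H_n$. For each such $\gamma_n$ the smooth-case analysis of Section~\ref{tuttolega} applies: Proposition~\ref{abs} identifies the Wasserstein gradient flow $\nu_t^{n,x}$ starting from $\delta_x$ with the law of the SDE \eqref{gs3}; Proposition~\ref{prop:Fkpre} gives $P_t^n f(x)=\int f\,d\nu_t^{n,x}$, continuous in $x$ with the contraction \eqref{feller}; and the linearity \eqref{eq:dr1.5} yields $\Semi{\mu_n}{t}=\int \nu_t^{n,x}\,d\mu_n(x)$ for all $\mu_n\in\ProbabilitiesTwo{K_n}$.

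Next I would pass these properties to the limit. For $x\in K$, pick $x_n\in K_n$ with $x_n\to x$; since $\delta_{x_n}\to \delta_x$ in $\ProbabilitiesTwo{H}$, Theorem~\ref{stabflows} gives $\nu_t^{n,x_n}\to \nu_t^x$ in $\ProbabilitiesTwo{H}$ for every $t\geq 0$. Combined with the joint continuity of $(t,x)\mapsto \nu_t^x$ already provided by Theorem~\ref{tgflow}, this yields continuity of $x\mapsto \tilde P_t f(x)$ for every $f\in C_b(H)$, hence in particular for $f\in L^\infty(\gamma)$. The Lipschitz estimate \eqref{felle} is even simpler: the contractivity Theorem~\ref{tgflow}(ii) applied to $\bar\mu=\delta_x$, $\bar\nu=\delta_y$ gives $W_2(\nu_t^x,\nu_t^y)\leq \|x-y\|_H$, so for $f\in\Lip_b(K)$ the Kantorovich--Rubinstein duality yields
\[
|\tilde P_tf(x)-\tilde P_tf(y)|\leq [f]_{\Lip(K)}\,W_1(\nu_t^x,\nu_t^y)\leq [f]_{\Lip(K)}\,W_2(\nu_t^x,\nu_t^y)\leq [f]_{\Lip(K)}\,\|x-y\|_H.
\]
The identity \eqref{rdi12} follows by taking $\mu_n\to\mu$ in $\ProbabilitiesTwo{H}$ with $\mu_n\in\ProbabilitiesTwo{K_n}$, using stability on the left-hand side and continuity of $x\mapsto \nu_t^x$ together with dominated convergence on the right.

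The main obstacle is step four, the identification $P_tf=\tilde P_tf$ in $L^2(\gamma)$. The natural strategy is to test \eqref{rdi12} on initial measures of the form $\mu=\rho_0\gamma$ with $\rho_0\in \Lip_b(K)\cap L^\infty(\gamma)$, $\rho_0\geq 0$, $\int \rho_0\,d\gamma=1$: by \eqref{rdi12} the gradient flow $\Semi{\rho_0\gamma}{t}$ has density $\tilde P_t\rho_0$ with respect to $\gamma$, while in the approximating smooth case Proposition~\ref{prop:Fkpre}(2) gives the analogous density equal to $P_t^n\rho_0$. Stability of the gradient flows together with the Lipschitz and $L^\infty$ bounds on $\tilde P_t\rho_0$ shows that $\tilde P_t$ defines a strongly continuous, symmetric, Markov contraction semigroup on $L^2(\gamma)$ (symmetry and Markovianity being inherited from the approximations). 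The delicate point is to recognize that its generator coincides with the generator of $\cE_\gamma$ on a common core of cylindrical functions; this can be achieved by invoking the identification between the Wasserstein gradient flow and the $L^2$-flow of $\cE_\gamma$ via the slope formula of Lemma~\ref{slopeentro} used in the closability proof, or equivalently via Mosco convergence of $\cE_{\gamma_n}$ to $\cE_\gamma$ (itself a consequence of the $\Gamma$-convergence in Lemma~\ref{lprod} combined with the Cheeger-energy characterization of the Dirichlet form). Once the generators agree on a core, uniqueness of the associated semigroup forces $\tilde P_t=P_t$ in $L^2(\gamma)$, and combined with the previously established continuity of $\tilde P_t f$ this gives the continuous version asserted in \eqref{rdi1}.
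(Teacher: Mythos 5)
Your proposal correctly identifies the ingredients (smooth finite-dimensional approximation, stability of gradient flows, Lipschitz contractivity, linearity in the initial measure), and your derivation of the contraction \eqref{felle} and the representation \eqref{rdi12} by coupling plus stability is sound. However, there are two genuine gaps at the points where the real work of the theorem is done.

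First, the regularizing claim $P_t:L^\infty(\gamma)\to C_b(K)$ does not follow from the continuity of $x\mapsto\nu_t^x$ in $\ProbabilitiesTwo{H}$. That continuity yields $x\mapsto\int f\,d\nu_t^x$ continuous only for $f\in C_b(H)$; for $f\in L^\infty(\gamma)$ (a bounded Borel function, determined only $\gamma$-a.e.), weak convergence $\nu_t^{x_n}\to\nu_t^x$ gives no information about $\int f\,d\nu_t^{x_n}$. What is actually needed, and what the paper uses, is the absolute continuity $\nu_t^x\ll\gamma$ together with the uniform entropy bound in Theorem~\ref{tgflow}(iii): for $x_n\to x$ the entropies $\RelativeEntropy{\nu_t^{x_n}}{\gamma}$ are uniformly bounded, so the densities $\rho_n=d\nu_t^{x_n}/d\gamma$ are equi-integrable and converge weakly in $L^1(\gamma)$, which is exactly the mode of convergence that handles integrands in $L^\infty(\gamma)$.

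Second, and more seriously, the identification $P_tf=\tilde P_tf$ is the heart of the theorem and your proposed route is not carried out. Saying that the generators "coincide on a common core of cylindrical functions via the slope formula of Lemma~\ref{slopeentro}, or equivalently via Mosco convergence of $\cE_{\gamma_n}$ to $\cE_\gamma$" is a gesture, not an argument: the $\Gamma$-convergence of Lemma~\ref{lprod} lives in the Wasserstein space $(\ProbabilitiesTwo{H},W_2)$ and is not the same as Mosco convergence of the quadratic forms $\cE_{\gamma_n}$ in $L^2$ (different space, different topology), the "Cheeger-energy characterization" is not available in the paper, and the slope characterization alone does not translate into a statement about generators without the full EVI/metric-slope machinery, which the paper deliberately avoids here. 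The paper instead works entirely at the level of resolvents. In the finite-dimensional step it passes to the limit in $\lambda\int R_\lambda^n f\,v\,d\gamma_n+\cE^n(R_\lambda^nf,v)=\int fv\,d\gamma_n$ by rewriting $\cE^n(R_\lambda^nf,v)$ with the integration-by-parts signed measures $\Sigma^n_{\see_i}$ (Lemma~\ref{lipb}(i), Proposition~\ref{pzamb1}); the crucial technical inputs are the uniform total-variation bound and equi-tightness of these measures for a monotone approximation $V_n\uparrow V$ (Proposition~\ref{pzamb2}) and the passage-to-the-limit Lemma~\ref{easy}. This also explains why the paper uses an increasing sequence $V_n\uparrow V$ rather than an arbitrary smoothing: the tightness estimate for the $\Sigma_h^{V_n}$ hinges on the monotonicity. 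The infinite-dimensional case is then reduced to the finite-dimensional one through the projections $\gamma_n=(\pi_n)_\#\gamma$, which give the exact compatibility $\cE_\gamma(u\circ\pi_n,v\circ\pi_n)=\cE_{\gamma_n}(u,v)$ of Lemma~\ref{lipb}(ii); a direct single-shot approximation as in your plan would not preserve this exact form identity, which is what lets one pass to the limit in the weak topology of $D(\cE_\gamma)$. So your Lipschitz and representation steps survive, but the identification of $\tilde P_t$ with $P_t$ needs either the paper's concrete resolvent-and-integration-by-parts argument or a fully worked-out alternative, not an appeal to Mosco convergence.
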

\begin{proof} Assuming \eqref{rdi1}, let us first show why it provides
a continuous version of $P_t$: if $x_n\to x$, and we denote by
$\rho_n$ the densities of $\nu_{t}^{x_n}$ with respect to $\gamma$,
whose existence is ensured by the estimate
Theorem~\ref{tgflow}(iii), the contracting property of the semigroup
gives that $\rho_n\gamma\to\rho\gamma$ weakly, where $\rho$ is the
density of $\nu_t^x$ with respect to $\gamma$. On the other hand,
the same estimate shows that
$\RelativeEntropy{\rho_n\gamma}{\gamma}$ are uniformly bounded,
therefore $\rho_n$ are equi-integrable in $L^1(\gamma)$ and weakly
converge in $L^1(\gamma)$ to $\rho$. This proves that the right hand
side in \eqref{rdi1} is continuous. Finally, \eqref{felle} is a
direct consequence of \eqref{rdi1} and Theorem~\ref{stabflows}(iii):
indeed, choosing $\Sigma\in\Gamma_o(\nu_t^x,\nu_t^y)$, we get
\begin{eqnarray*}
|P_f(x)-P_tf(y)|&=&\left|\int_H f\,d\nu_t^x-\int_H
f\,d\nu_t^y\right|
=\left|\int_H (f(u)-f(v))\,d\Sigma(u,v)\right|\\
&\leq&[f]_{\rm Lip(K)}\int_H\|u-v\|\,d\Sigma(u,v)\leq
[f]_{\rm Lip(K)}W_2(\nu_t^x,\nu_t^y)\\
&\leq&[f]_{\rm Lip(K)}\|x-y\|.
\end{eqnarray*}

\smallskip
\noindent {\bf Step 1: the general finite-dimensional case
$H=\R^k$.} It suffices to show that the class of convex $V$'s for
which the equivalence \eqref{rdi1} holds for the probability measure
$\gamma=Z^{-1}\exp(-V)\, \Leb{k}$, is closed under monotone
convergence.

Indeed, if $V$ is smooth with $\nabla V$ and all its derivatives
bounded, then we know from Section~\ref{tuttolega} that the
Wasserstein semigroup coincides with the FP semigroup; therefore
from point 3 of Proposition \ref{prop:Fkpre} we obtain \eqref{rdi1},
and \eqref{felle} corresponds to \eqref{feller}.

By a convolution approximation, we extend the result to all convex
Lipschitz functions $V$ with $\int\exp(-V)\,dx<\infty$; indeed, if
$\rho_n$ is the density of $\cN(0,n^{-1} I)$ with respect to
$\Leb{k}$, where $I$ is the identity matrix in $\R^k$, then
$V_n:=V*\rho_n$ forms an increasing sequence by convexity of $V$.
Eventually we obtain all convex functions $V$ with
$\int\exp(-V)\,dx<\infty$ using the fact that they can be
represented as the supremum (see for instance \cite{br}) of
countably many affine functions $\ell_i$, and applying the
equivalence to $V_n:=\max_{1\leq i\leq n}\ell_i$ (notice that for
$n$ large enough $V_n$ has at least linear growth at infinity).

So, let us consider a log-concave probability measure
$\gamma=\exp(-V)\, \Leb{k}$ and a sequence $V_n\uparrow V$, with
$V_n$ real-valued and $V_1$ having at least a linear growth at
infinity, such that the statement of the theorem holds for all
measures $\gamma_n=Z_n^{-1}\exp(-V_n)\, \Leb{k}$; obviously the
normalization constants $Z_n$ converge to 1 and $\gamma_n\to\gamma$
weakly. Notice also that ${\rm supp\,}\gamma_n=\R^k$.

We will also use the fact that both sides in \eqref{rdi1} are continuous
with respect to $\gamma$-almost sure and dominated convergence, so we need
only to check the identity when $f\in {\rm Lip}_b(K)$.
We recall that, in general, the semigroup $P_t f$ is
related to the resolvent family $R_\lambda f$ by
\begin{equation}\label{respt}
R_\lambda f(x)= \int_0^\infty e^{-\lambda t}
P_tf(x)\, dt \qquad x\in K,\,\, f\in C_b(K).
\end{equation}
We define the bilinear form
\[
\cE^n(u,v) \, := \, \int_{\R^k} \la\nabla u,\nabla v\ra \, d\gamma_n,
\qquad u,\,v\in C^1_b(\R^k).
\]
Moreover, we denote by $R_\lambda^n$ the resolvent family of $\cE^n$,
again related to the semigroup $P^n_t$ on $C_b(\R^k)$ relative to $\cE^n$ by
$\int_0^\infty\exp(-\lambda t)P^n_t\,dt$. Using the representation
\eqref{rdi1} of $P^n_t$, by Theorem~\ref{stabflows} we know that,
for all $f\in C_b(\R^k)$, $R^n_\lambda f$ pointwise converge, on $K$,
to the function $F_\lambda f$ defined by
\[
F_\lambda f(x) := \int_0^\infty e^{-\lambda t} \, \int_H f\,\nu_t^x
\, dt \quad\qquad x\in K.
\]
We are going to show that $F_\lambda f$ coincides with $R_\lambda f$,
the resolvent family of $\cE_\gamma$, for all ${\rm Lip}_b(K)$, so that
\[
R_\lambda f(x)= \int_0^\infty e^{-\lambda t}
\int_{\R^k}f\,d\nu_t^x\, dt \qquad x\in K,\,\, f\in {\rm Lip}_b(K).
\]
Since, by the injectivity of the Laplace transform, \eqref{respt} uniquely determines
the semigroup $P_t$ on ${\rm Lip}_b(K)$, \eqref{rdi1} would be achieved.

So, let $f\in{\rm Lip}_b(K)$; possibly replacing $f$ by a Lipschitz extension
to the whole of $\R^k$ with the same Lipschitz constant, we can assume that
$f\in {\rm Lip}_b(\R^k)$ and $[f]_{{\rm Lip}(\R^k)}=[f]_{{\rm Lip}(K)}$
(indeed, neither $F_\lambda f$ nor $P_t f$ depend
on this extension). By applying \eqref{felle} to $\gamma_n$
one obtains that $\lambda [R_\lambda^n f]_{{\rm Lip}(\R^k)}\leq [f]_{{\rm Lip}(K)}$, hence
$F_\lambda f\in {\rm Lip}_b(K)$ and Lemma~\ref{lipb} gives
$F_\lambda f\in D(\cE_\gamma)$.
Now, in order to prove that $F_\lambda f$ coincides with $R_\lambda f$,
by a density argument it is enough to show that
\begin{equation}\label{st10}
\lambda \int_{\R^k}   F_\lambda f\, v \, d\gamma  + \cE_\gamma(F_\lambda f,v)
= \int_{\R^k} f \, v \, d\gamma \qquad \forall \, v\in C^2_c(\R^k).
\end{equation}
Our strategy is to pass to the limit as $n\to\infty$ in:
\begin{equation}\label{st1}
\int_{\R^k} fv\,d\gamma_n=\lambda \int_{\R^k} R_\lambda^nf  \, v \, d\gamma_n +
\cE^n(R_\lambda^nf,v)\qquad \forall \ v\in C^2_c(\R^k).
\end{equation}
Let $(\ee_1,\ldots,\ee_k)$ be the canonical basis of $\R^k$.
By applying the integration by parts formula \eqref{ibpr} with
$h=\ee_i$ and $u=\langle\nabla v,\ee_i\rangle R_\lambda^nf$, we get
\begin{equation}\label{rio1}
\int_{\R^k} f  v \, d\gamma_n =
\int_{\R^k} R^n_\lambda f (\lambda v-\Delta v) \,
d\gamma_n  + \sum_{i=1}^k \int_{\R^k} R^n_\lambda f \
\langle\nabla v,\ee_i\rangle  \, d\Sigma_{\see_i}^n
\end{equation}
where $\Sigma_{\see_i}^n$ are associated to the measure $\gamma_n$.
The crucial fact is now the following: we can apply Lemma \ref{easy} to
\[
\sigma_n  := (\lambda v- \Delta v)  \, d\gamma_n + \sum_{i=1}^k
\langle\nabla v,\ee_i\rangle  \, d\Sigma_{\see_i}^n,
\qquad \sigma_\infty  := (\lambda v- \Delta v)  \, d\gamma +  \sum_{i=1}^k
\langle\nabla v,\ee_i\rangle  \, d\Sigma_{\see_i},
\]
with $\varphi_n(x) := R^n_\lambda f(x)$, $\varphi_\infty(x)=F_\lambda f(x)$
and $\Sigma_{\see_i}$ associated to the measure $\gamma$. Indeed,
assumptions (i) and (ii) of the Lemma~\ref{easy} are guaranteed by
Proposition~\ref{pzamb2} in the Appendix, while (iii) and (iv) hold trivially.
Therefore, by \eqref{rio1} we have:
\[
\int_{\R^k} f v \, d\gamma  =  \lim_{n\to \infty}
\int_{\R^k} f v \, d\gamma_n = \int_{\R^k} F_\lambda f \left(\lambda v
- \Delta v\right) \, d\gamma + \sum_{i=1}^k \int_{\R^k} F_\lambda f \
\langle\nabla v,\ee_i\rangle  \, d\Sigma_{\see_i}.
\]
Again, by the integration by parts formula (\ref{ibpr}) shows that
the last expression is equal to the right hand side in (\ref{st10}).
This proves that $F_\lambda f=R_\lambda f$ on $K$ for all $f\in {\rm Lip}_b(K)$.

Notice now that \eqref{rdi12} holds for smooth $V$ by
\eqref{eq:dr1.5} and Proposition \ref{abs}. By approximation, using
the stability result of Theorem \ref{stabflows}, we obtain the
general case.

\smallskip
\noindent
{\bf Step 2: from the finite-dimensional to the infinite-dimensional case.}
We fix a complete orthonormal system
$\{\ee_i\}_{i\geq 1}$ in $H$ and we set $\Hn :={\rm span}\{\ee_1,\ldots,\ee_n\}$,
denoting as usual by $\pi_n:H\to\Hn$ the finite-dimensional projections. Setting
$\gamma_n=(\pi_n)_\#\gamma$, it is immediate to check that $\gamma_n$ is log-concave
in $H_n$ and that $H^0(\gamma_n)=H_n$ (if not, we would get that $H^0(\gamma)$ is
contained in a proper subspace of $H$, contradicting \eqref{de1}). We set:
\[
\cE^n(u,v) :=\int_{H_n} \la\nabla u,\nabla v\ra \, d\gamma_n
\qquad u,\,v\in C^1_b(\Hn ).
\]
We should rather write $\nabla_{\Hn }u$ for $u\in C^1_b(\Hn )$, but
since the scalar product of $\Hn $ is induced by $H$ there is no
ambiguity in writing $\nabla u:\Hn \mapsto \Hn$. We denote by
$(R_\lambda)_{\lambda>0}$, $P_t$ (respectively
$(R_\lambda^n)_{\lambda>0}$, $P^n_t$) the resolvent family and the
semigroup of $\cE_\gamma$ (resp. $\cE^n$). We also know, by the
previous step, that $R_\lambda^n f$ is representable on $C_b(K_n)$
by $\int_0^\infty\exp(-\lambda t)\int_{H_n}f\,d\nu^{n,x}_t\,dt$:
here $K_n$ denotes the support of $\gamma_n$ and $\nu^{n,x}_t$ the
associated Wasserstein semigroup in $\ProbabilitiesTwo{K_n}$. Since
$\gamma_n=(\pi_n)_\#\gamma$, we have $\pi_n(x)\in K_n$ for all $x\in
K$. As a consequence, by Theorem~\ref{stabflows} we obtain that
\[
\lim_{n\to\infty} R_\lambda^n f(\pi_n(x)) =
\lim_{n\to\infty}\int_0^\infty e^{-\lambda t}\int_H f\,
d\nu^{n,\pi_n(x)}_t \, dt= \int_0^\infty e^{-\lambda t} \int_H f \,
d\nu_t^x\, dt \qquad \forall \ x\in K
\]
for all $f\in C_b(H)$. We shall denote, as in Step 1, by $F_\lambda f$ the
right-hand side. Our strategy will be, again, to show that
$R_\lambda f(x)=\int_0^\infty\exp(-\lambda t)F_\lambda f(x)\,dt$.
We assume first that $f=g\circ\pi_k$ is cylindrical function, with
$g\in {\rm Lip}_b(H_k)$; by applying \eqref{felle} to $\gamma_n$ one obtains that
$\lambda [(R_\lambda^n f)\circ\pi_n]_{{\rm Lip}(K)}\leq [f]_{{\rm Lip}(K_n)}
\leq [g]_{\rm Lip(H_k)}$, hence $F_\lambda f\in {\rm Lip}_b(K)$ and
Lemma~\ref{lipb} gives $F_\lambda f\in D(\cE_\gamma)$.

Now, let $v=u\circ\pi_l$ and $u\in C^1_b(H_l)$; for $n\geq\max\{k,l\}$,
taking into account \eqref{iprbis} and the identities $f=f\circ\pi_n$,
$v=v\circ\pi_n$, we have
\begin{eqnarray}\label{lastlast}
\int_H v f \, d\gamma  =
\int_{H_n}  v f \, d\gamma_n &=&
\lambda\int_{H_n} v R_\lambda^n f \,d\gamma_n+\cE^n(v,R_\lambda^n f)\nonumber \\&=&
\lambda\int_H v (R_\lambda^n f)\circ\pi_n\,d\gamma+\cE_\gamma(v,(R_\lambda^n f)\circ\pi_n).
\end{eqnarray}
Now, $(R_\lambda^n f)\circ\pi_n$ converge to $F_\lambda f$ in $L^2(\gamma)$ and
is bounded with respect to the norm $\Vert\cdot\Vert_{\cE_{\gamma,1}}$, by the
uniform Lipschitz bound. Therefore, by the closability of $\cE_\gamma$,
$(R_\lambda^n f)\circ\pi_n\to F_\lambda f$ in the weak topology of $D(\cE)$.
Thus, we can passing to the limit as $n\to\infty$ in \eqref{lastlast} to
obtain
$$
\int_H v f \, d\gamma  =
\lambda\int_H v F_\lambda f\,d\gamma+\cE_\gamma(v,F_\lambda f)
\qquad\forall v=u\circ\pi_l,\,\,u\in C^1_b(H_l).
$$
By the $L^2(\gamma)$ density of $C^1_b$ cylindrical functions $v$,
we obtain $R_\lambda f(x)=F_\lambda f$
for all Lipschitz and bounded cylindrical functions $f$. As a consequence,
\eqref{rdi1} holds for this class of functions. Since both sides in \eqref{rdi1}
are continuous with respect to $\gamma$-almost sure and dominated convergence,
again a density argument shows that the equality \eqref{rdi1}
extends to all $f\in L^\infty(\gamma)$.

By the previous step, we know that \eqref{rdi12} holds for the
finite-dimensional case. By approximation, using the stability
result of Theorem \ref{stabflows}, the contractivity of gradient
flows of Theorem \ref{tgflow} and Lemma \ref{easy} below, we obtain
the general case.
\end{proof}

In the proof of Theorem~\ref{nonsmooth} we also used the following result.
\begin{lemma}\label{easy}
Let $\sigma_n,\,\sigma_\infty$ be finite signed measures on $H$ and
let $\varphi_n,\,\varphi_\infty:H\to\R$ satisfy:
\begin{itemize}
\item[(i)] $\sup_n|\sigma_n|(H)<+\infty$ and
\[
\lim_{n\to\infty}\int\varphi\, d\sigma_n=
\int\varphi\, d\sigma_\infty\qquad \forall\varphi\in C_b(H);
\]
\item[(ii)] there exist compacts sets $J_m\subset H$ such that
$\sup_n |\sigma_n|\left(H\setminus J_m\right)\to 0$ as $m\to\infty$;
\item[(iii)] $\{\varphi_n\}_{n\in\N\cup\{\infty\}}$ is
equi-bounded and equi-continuous;
\item[(iv)] $\varphi_n$ converge pointwise to $\varphi_\infty$ on
${\rm supp\,}\sigma_\infty$.
\end{itemize}
Then $\lim_n\int_H\varphi_n\,d\sigma_n=\int_H\varphi_\infty\,d\sigma_\infty$.
\end{lemma}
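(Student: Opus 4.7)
The plan is to split
\[
\int_H\varphi_n\,d\sigma_n-\int_H\varphi_\infty\,d\sigma_\infty
=\int_H\varphi_\infty\,d(\sigma_n-\sigma_\infty)
+\int_H\psi_n\,d\sigma_n,\qquad \psi_n:=\varphi_n-\varphi_\infty,
\]
and handle the two pieces separately. Since $\varphi_\infty$ belongs to the equi-bounded, equi-continuous family in (iii), one has $\varphi_\infty\in C_b(H)$, and the first summand on the right tends to $0$ by hypothesis (i). The task thus reduces to showing that $\int_H\psi_n\,d\sigma_n\to 0$, where $\{\psi_n\}$ is equi-bounded (say by $M$), equi-continuous, and converges pointwise to $0$ on $S:=\supp\sigma_\infty$.

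Fix $\varepsilon>0$. By (ii) together with inner regularity of the finite measure $\sigma_\infty$ on the Polish space $H$, I choose a compact $J\subset H$ with
\[
\sup_n|\sigma_n|(H\setminus J)+|\sigma_\infty|(H\setminus J)\leq\varepsilon,
\]
which gives immediately $\bigl|\int_{H\setminus J}\psi_n\,d\sigma_n\bigr|\leq 2M\varepsilon$. It remains to bound $\int_J\psi_n\,d\sigma_n$. On the compact $J$ the family $\{\psi_n|_J\}$ is equi-continuous and equi-bounded, hence relatively compact in $C(J)$ by Arzel\`a-Ascoli: given any subsequence of $\{\psi_n\}$, a sub-subsequence $\psi_{n_k}$ converges to some $\psi^{\ast}\in C(J)$ uniformly, and condition (iv) forces $\psi^{\ast}\equiv 0$ on the compact set $S\cap J$.

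I then extend $\psi^{\ast}$ via the Tietze theorem to $\tilde\psi^{\ast}\in C_b(H)$ with $\|\tilde\psi^{\ast}\|_\infty\leq 2M$, and write
\[
\int_J\psi_{n_k}\,d\sigma_{n_k}=\int_J\psi^{\ast}\,d\sigma_{n_k}+o(1)
=\int_H\tilde\psi^{\ast}\,d\sigma_{n_k}-\int_{H\setminus J}\tilde\psi^{\ast}\,d\sigma_{n_k}+o(1),
\]
where the $o(1)$ uses the uniform convergence $\psi_{n_k}\to\psi^{\ast}$ combined with the uniform total-variation bound $\sup_n|\sigma_n|(H)<\infty$ from (i). Hypothesis (i) sends $\int_H\tilde\psi^{\ast}\,d\sigma_{n_k}$ to $\int_H\tilde\psi^{\ast}\,d\sigma_\infty=\int_{S\cap J}\tilde\psi^{\ast}\,d\sigma_\infty+\int_{S\setminus J}\tilde\psi^{\ast}\,d\sigma_\infty$, and this quantity is bounded by $2M\varepsilon$ since $\tilde\psi^{\ast}=\psi^{\ast}\equiv 0$ on $S\cap J$ and $|\sigma_\infty|(S\setminus J)\leq\varepsilon$; the remaining tail term is likewise $O(M\varepsilon)$. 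Hence along any subsequence extracted as above, $\limsup_k\bigl|\int_J\psi_{n_k}\,d\sigma_{n_k}\bigr|\leq CM\varepsilon$, and the usual subsequence argument propagates this to $\limsup_n\bigl|\int_H\psi_n\,d\sigma_n\bigr|\leq C'M\varepsilon$. Letting $\varepsilon\downarrow 0$ finishes the proof.

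The main obstacle I expect is that for signed measures, weak convergence together with tightness does not force $|\sigma_n|$ to concentrate on $S$: the elementary example $\delta_{1/n}-\delta_{-1/n}\to 0$ in $\R$ shows that $|\sigma_n|$ may keep sizeable mass in regions arbitrarily close to, but disjoint from, $S$. Consequently one cannot localize the analysis by simply discarding regions where $\sigma_n$ might put mass but $\sigma_\infty$ does not. It is precisely the equi-continuity of $\psi_n$ that tames such ``ghost mass'', via Arzel\`a-Ascoli, allowing the substitution of $\psi_{n_k}$ by the continuous limit $\psi^{\ast}$ whose global Tietze extension $\tilde\psi^{\ast}$ is a legitimate test function in $C_b(H)$ for hypothesis (i).
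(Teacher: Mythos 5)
Your proof is correct and uses essentially the same mechanism as the paper's: extracting a uniformly convergent subsequence on a compact exhaustion via Arzel\`a--Ascoli, extending the limit continuously, identifying it with the target function on the support of $\sigma_\infty$ via (iv), and feeding the extension as a test function into hypothesis (i). The only cosmetic difference is that you first subtract $\varphi_\infty$ so that the uniform limit on $J\cap\supp\sigma_\infty$ is $0$, whereas the paper works directly with $\varphi_n$ and the three-term triangle inequality.
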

\noindent
\begin{proof} Without loss of generality we can assume that $\lim_n\int_H\varphi_n\,d\sigma_n$
exists (so that we can freely extract subsequences) and $|\varphi_n|\leq 1$, $|\varphi_\infty|\leq 1$.
Let us fix $m$ and assume, possibly extracting a subsequence, that $\varphi_n\to\psi_m$ uniformly
on $J_m$ as $n\to\infty$; obviously $\varphi_\infty=\psi_m$ on $J_m\cap{\rm supp\,}\sigma_\infty$.
We extend $\psi_m$ continuously to the whole of $H$ with $|\psi_m|\leq 1$. Then:
\begin{eqnarray*}
\left|\int_H \varphi_n \, d\sigma_n - \int_H \varphi_\infty \, d\sigma_\infty\right|
&\leq&\left|\int_H\varphi_n \,d\sigma_n- \int_H\psi_m\, d\sigma_n\right|+
\left|\int_H \psi_m\,d\sigma_n-\int_H \psi_m\,d\sigma_\infty\right|\\&+&
\left|\int_H \psi_m\,d\sigma_\infty-\int_H \varphi_\infty\,d\sigma_\infty\right|.
\end{eqnarray*}
The first term in the right hand side can be estimated, splitting the integration on $J_m$
and on $H\setminus J_m$, with
$\sup_{J_m}|\varphi_n-\psi_m||\sigma_n|(H)+2|\sigma_n|(H\setminus J_m)$.
The second term tends to 0 as $n\to\infty$ by
our first assumption, while the third one can be estimated with $2|\sigma_\infty|(H\setminus J_m)$.
Therefore, taking first the limsup as $n\to\infty$ and then letting $m\to\infty$ we
have the thesis.
\end{proof}

\begin{remark}[Continuity of $P_t$]
{\rm The $\ProbabilitiesTwo{H}$-continuity of
$t\mapsto\Semi{\delta_x}{t}$ in $[0,+\infty[$ shows that $P_tf\to f$
pointwise in $K$ as $t\downarrow 0$ for all functions $f\in C(K)$
with at most quadratic growth at infinity, and in particular for
$f\in C_b(K)$. Taking \eqref{felle} into account, the convergence is
uniform on compact subsets of $H$ if $f\in {\rm Lip}_b(K)$; by
density, $P_tf\to f$ uniformly on compacts sets as $t\downarrow 0$
for all $f\in UC_b(K)$, the space of bounded \emph{uniformly}
continuous functions on $H$. It is also possible to show the
regularizing effect $P_t(C_b(K))\subset UC_b(K)$ for $t>0$: indeed,
the finite-dimensional smooth systems are easily seen to be Strong
Feller (see section 7.1 of \cite{dpz2}), and this property extends
to the general case by approximation. }\end{remark}

\section{The Markov process}\label{process}

In this section we complete the proofs of Theorem~\ref{main1},
Theorem~\ref{main2}, Theorem~\ref{main3} and Theorem~\ref{main4},
proving the existence of a unique Markov family of probability
measures $\bbP_x$ on $K^{[0,+\infty[}$ satisfying
\begin{equation}
P_tf(x)=\E_x[f(X_t)]\qquad\forall x\in K
\end{equation}
for all bounded Borel functions $f$. The continuity of $\bbP_x$ will
be a consequence of the regularizing properties of the Wasserstein
semigroup $\Semi{\mu}{t}$ (in particular the continuity of
$x\mapsto\Semi{\delta_x}{t}$ will play an important role). The
regularity property \eqref{outer}, instead, is based on general
results from \cite{maro}, that provide a Markov family satisfying a
weaker property, and on the continuity of
$t\mapsto\Semi{\delta_x}{t}$. As in the previous section we will use
the notation $\nu_t^x$ for $\Semi{\delta_x}{t}$.

Recall also that the regularizing estimate (iii) in
Theorem~\ref{tgflow} give $\nu_t^x\ll\gamma$ for all $x\in K$; by
the uniform bound on the relative Entropy (which yields
equi-integrability of the densities), Dunford-Pettis theorem
provides the continuity property
\begin{equation}\label{privico}
x_n\in K, \quad \|x_n-x\|\to 0, \quad \rho_n\gamma=\nu_t^{x_n},
\quad \rho\gamma=\nu_t^x \quad\Longrightarrow\quad
\text{$\rho_n\rightharpoonup\rho$ weakly in $L^1(\gamma)$}
\end{equation}
for all $t>0$.

\smallskip
\noindent {\bf Proof of Theorem~\ref{main1}.} We already proved
statement (a) in Lemma~\ref{lipb}. Let us consider the semigroup
$P_t$ induced by $\cE_\gamma$, linked to $\nu_t^x$ by \eqref{rdi1};
the semigroup property of $P_t$ can be read at the level of
$\nu_t^x$, and gives the Chapman-Kolmogorov equations. Therefore
these measures are the transition probabilities of a
time-homogeneous Markov process $\bbP_x$ in $K^{[0,+\infty[}$. In
particular, the Markov property gives the explicit formula
\begin{eqnarray}\label{fdd}
&&\bbP_x(\{X_{t_1}\in A_1\}\cap\cdots\cap\{X_{t_{n-1}}\in A_{n-1}\}\cap\{X_{t_n}\in A_n\})\\&=&
\int_{A_1}\cdots\int_{A_{n-1}}\int_{A_n}
1\,d\nu_{t_n-t_{n-1}}^{y_{n-1}}(y_n)\,d\nu^{y_{n-2}}_{t_{n-1}-t_{n-2}}(y_{n-1})
\cdots d\nu^{y_0}_{t_1-t_0}(y_1)\nonumber
\end{eqnarray}
(with $y_0=x$, $0=t_0\leq t_1<\cdots<t_n<+\infty$, $A_1,\ldots,A_n\in\BorelSets{K}$)
for these finite-dimensional distributions. The continuity of $x\mapsto\bbP_x$, namely
the continuity of all finite-dimensional distributions, is a direct consequence of
\eqref{privico} and \eqref{fdd}.

Now, let us prove \eqref{outer}. In order to apply the general results of \cite{maro}, we need
to emphasize two more properties of $\cE_\gamma$. First, $\cE_\gamma$ is \emph{tight}:
this means that there exists a nondecreasing sequence of compact sets $F_m\subset H$ such that
${\rm cap}_\gamma(H\setminus F_m)\to 0$ ( ${\rm cap}_\gamma$ being the capacity induced by $\cE_\gamma$, see
\cite{maro}). This can be proved using \eqref{Markov} and the
argument in \cite[Proposition~IV.4.2]{maro}: let $(x_n)\subset H$ be a dense sequence and define
$$
w_n(x):=\min\left\{1,\min_{0\leq i\leq n}\|x-x_i\|\right\}.
$$
It is immediate to check that $0\leq w_n\leq 1$, $w_n\downarrow 0$ in $H$ and
$[w_n]_{\rm Lip(H)}\leq 1$. Therefore $(w_n)$ is bounded in the weak topology of
$D(\cE_\gamma)$ and converges to $0$ in the weak topology of $\cE_\gamma$.
The Banach-Saks theorem ensures the existence of a subsequence
$(n_k)$ such that the Cesaro means
$$
v_k:=\frac{w_{n_1}+\cdots+w_{n_k}}{k}
$$
converge to $0$ strongly in $D(\cE_\gamma)$. This implies \cite[Proposition~III.3.5]{maro}
that a subsequence $(v_{k(l)})$ of $(v_k)$ converges to $0$ quasi-uniformly, i.e. for all integers $m\geq 1$
there exists a closed set $G_m\subset H$ such that ${\rm cap}_\gamma(H\setminus G_m)<1/m$
and $v_{k(l)}\to 0$ uniformly on $G_m$. As $w_{n(k(l)))}\leq v_{k(l)}$, if we set $F_m=\cup_{i\leq m}G_i$, we
have that $w_{n(k(l))}\to 0$ uniformly on $F_m$ for all $m$ and
${\rm cap}_\gamma(H\setminus F_m)\leq 1/m$. If $\epsilon>0$ and $n$ is an integer such that
$w_n<\epsilon$ on $F_m$, the definition of $w_n$ implies
$$
F_m\subset\bigcup_{i=1}^n B(x_i,\epsilon).
$$
Since $\epsilon$ is arbitrary this proves that $F_m$ is totally bounded, hence compact.
This completes the proof of the tightness of $\cE_\gamma$.

Second, $\cE_\gamma$ is \emph{local},
i.e. $\cE_\gamma(u,v)=0$ whenever $u,\,v\in D(\cE_\gamma)$ have compact and disjoint
support. This can be easily achieved (see also \cite[Lemma~V.1.3]{maro}) taking sequences
$(u_n),\, (v_n)\subset C^1_b(A(\gamma))$ converging to
$u$ and $v$ respectively in the norm $\|\cdot\|_{\gamma,1}$,
and modifying them, without affecting the convergence, so that
$u_n$ and $v_n$ have disjoint supports. One concludes noticing
that $\cE_\gamma(f,g)=0$ whenever $f,\,g\in C^1_b(A(\gamma))$ have
disjoint supports.

These properties imply, according to \cite[Theorem~IV.3.5,
Theorem~V.1.5]{maro} the existence of a Markov family of probability
measures $\{\tilde{\bbP}_x\}_{x\in K}$ on $C([0,+\infty[;K)$
(uniquely determined up to $\gamma$-negligible sets), satisfying
\begin{equation}\label{maro1}
P_tf(x)=\tilde{\E}_x[f(X_t)]\qquad\text{for $\gamma$-a.e. $x\in K$}
\end{equation}
for all bounded Borel functions $f$ on $K$ (here $\tilde{\E}_x$ is
the expectation with respect to $\tilde{\bbP}_x$). Now, since $H$ is
separable we can find a countable family $\mathcal A$ of open sets
stable under finite intersections which generates $\BorelSets{H}$;
choosing $f=1_A$ in \eqref{maro1} and \eqref{rdi1}, and taking into
account that $\mathcal A$ is countable, we can find a
$\gamma$-negligible set $N\subset K$ such that
$\nu_t^x(A)=\E_x[1_A(X_t)]$ for all $A\in\mathcal A$, $t\in\Q$ and
all $x\in K\setminus N$. As a consequence, $\nu^t_x$ is the law of
$X_t$ under $\tilde{\bbP}_x$ for all $x\in K\setminus N$ and all
$t\in\Q$. We can now use the continuity of the process $X_t$ and of
$t\mapsto\nu_t^x$ to obtain that $\nu_t^x$ are the one-time
marginals of $\tilde{\bbP}_x$ for all $x\in K\setminus N$.

We prove now path continuity under  $\bbP_x$ for $x\in K\setminus
N$, using the property $\nu^x_t\ll\gamma$: we adapt the approach of
\cite{do} to our setting. By the Markov property we obtain that all
finite-dimensional distributions of $\tilde{\bbP}_x$ and $\bbP_x$
coincide; as a consequence, if we denote by $i:\Omega\to
K^{[0,+\infty[}$ the (obviously measurable) injection map,
$i_\#\tilde{\bbP}_x=\bbP_x$. By the Ulam lemma, we can find compacts
sets $K_n\subset\Omega$ with $\tilde{\bbP}_x(K_n)\uparrow 1$; now,
if $S\subset [0,+\infty)$ is bounded, countable and $B_S\subset
K^{[0,+\infty[}$ is the measurable set defined by
$$
B_S:=\left\{\omega\in\Omega:\ \text{the restriction of $\omega$ to $S$ is
uniformly continuous}\right\},
$$
from the inclusion $B_S\cap\Omega\supset K_n$ we obtain $\bbP_x(B_S)\geq\tilde{\bbP}_x(K_n)$,
hence $\bbP_x(B_S)=1$. A well known criterion \cite[Lemma~2.1.2]{strook} then gives that
$\bbP_x^*(\Omega)=1$. This proves that
\begin{equation}\label{outer}
\bbP_x^*(\Omega)=1\qquad\text{for $\gamma$-a.e. $x\in K$.}
\end{equation}

In order to show the first part of statement (c), fix $x\in K$,
and define $B_S$ as above, with $S\subset ]0,+\infty[$ satisfying $\varepsilon:=\inf S>0$ and
$\sup S<\infty$. Since the law of $X_\varepsilon(x)$ is absolutely continuous with respect to
$\gamma$, we know from \eqref{outer} that $\bbP_{X_\varepsilon}(B_{S-\varepsilon})=1$ $\bbP_x$-almost surely.
Taking expectations, and using the Markov property, we get $\bbP_x(B_S)=1$. Again the same argument
in \cite[Lemma~2.1.2]{strook} shows that $\bbP_x^*\left(C(]0,+\infty[;H)\right)=1$.

Finally, we use the representation \eqref{rdi1} and the fact that $P_t$ is selfadjoint
(due to the fact that $\cE_\gamma$ is symmetric) to obtain
\begin{equation}\label{caserta}
\int_K\int_K \varphi(x)\psi(y)\,d\nu^y_t(x)\,d\gamma(y)=
\int_K\int_K \varphi(x)\psi(y)\,d\nu^x_t(y)\,d\gamma(x)
\qquad\forall \varphi,\,\psi\in L^\infty(\gamma).
\end{equation}
This means that the process $\bbP_x$ is reversible.

\smallskip
\noindent {\bf Proof of Theorem~\ref{main2}.} It is a direct
consequence of the estimates in Theorem~\ref{tgflow} and of the
coincidence, proved above, of the law of $X_t$ under $\bbP_x$ with
$\nu_t^x$. {\penalty-20\null\hfill$\square$\par\medbreak}

\smallskip
\noindent {\bf Proof of Theorem~\ref{main3}.} We shall denote by
$\nu^{n,x}_t$, $\E_x^n$ (resp. $\nu^x_t$, $\E_x$) the transition
probabilities and the expectations relative to $\bbP_x^n$ (resp.
$\bbP_x$). From Theorem~\ref{stabflows} we obtain:
\begin{equation}\label{privico1}
x_n\in K_n,\,\,\Vert x_n-x\Vert\to 0,\,\,x\in
K\quad\Longrightarrow\quad \text{$\nu^{n,x_n}_t\to\nu^x_t$ in
$\ProbabilitiesTwo{H}$.}
\end{equation}
We shall prove by induction on $m$ that $\E_{x_n}^n[f(X_{t_1},\ldots,X_{t_m})]\to
\E_x[f(X_{t_1},\ldots,X_{t_m})]$ for all $f\in C_b(H^m)$. Obviously we can
restrict ourselves to $f\in {\rm Lip}_b(H^m)$ and the case $m=1$ corresponds
to \eqref{privico1}. So, let us assume the statement valid for $m\geq 1$ and let us
prove it for $m+1$. Let $f\in {\rm Lip}_b(H^{m+1})$, and let $\Pi_n:H\to K_n$
be the canonical projection. By the weak convergence of $\gamma_n$ to $\gamma$,
we have $\|\Pi_n(y)-y\|\to 0$ for all $y\in K$. As a consequence, the induction
assumption gives $\E^n_{\Pi_n(y)}[f(y,X_{t_2},\ldots,X_{t_{m+1}})]\to
\E_y[f(y,X_{t_2},\ldots,X_{t_{m+1}})]$. Since $f$ is Lipschitz
we have also
\begin{equation}\label{triu}
\lim_{n\to\infty}
\E^n_{\Pi_n(y)}[f(\Pi_n(y),X_{t_2},\ldots,X_{t_{m+1}})]=
\E_y[f(y,X_{t_2},\ldots,X_{t_{m+1}})]\qquad\forall y\in K.
\end{equation}
Thanks to \eqref{triu} and Lemma~\ref{easy}, we can pass to the limit as $n\to\infty$
in the identity
\begin{eqnarray*}
\E^n_x[f(X_{t_1},\ldots,X_{t_{m+1}})]&=&\int_K\E^n_y[f(y,X_{t_2},\ldots,X_{t_{m+1}})]\,d\nu^{n,x_n}_{t_1}(y)
\\&=&\int_H\E^n_{\Pi_n(y)}[f(\Pi_n(y),X_{t_2},\ldots,X_{t_{m+1}})]\,d\nu^{n,x_n}_{t_1}(y)
\end{eqnarray*}
to obtain $\E^n_x[f(X_{t_1},\ldots,X_{t_{m+1}})]\to
\int_K\E_y[f(y,X_{t_2},\ldots,X_{t_{m+1}})]\,
d\nu^x_{t_1}(y)=\E_x[f(X_{t_1},\ldots,X_{t_{m+1}})]$. This proves
statement (a). Statements (b) and (c) follow at once by the
tightness Lemma~\ref{cla3} below.
{\penalty-20\null\hfill$\square$\par\medbreak}

\begin{lemma}[Tightness]\label{cla3}
Let $\gamma_n$ and $\gamma$ as in Theorem~\ref{main3}, let $x\in
K(\gamma)$ and let $x_n\in K(\gamma_n)$ be such that $x_n\to x$. For
all $0<\varepsilon\leq T<+\infty$, $h\in H$, 
the laws of $(\langle X_t,h\rangle_H, t\in[\varepsilon,T])$ under
$\bbP^n_{x_n}$, $n\in\N$, form a
tight sequence in $C([\varepsilon,T])$. Moreover the laws of
$(\langle X_t,h\rangle_H, t\in[0,T])$ under $\bbP^n_{\gamma_n}$,
$n\in\N$, form a tight sequence in $C([0,T])$.
\end{lemma}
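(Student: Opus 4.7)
The plan is to prove the stationary statement (involving $\bbP^n_{\gamma_n}$) first by a Kolmogorov--Chentsov moment bound obtained via a Lyons--Zheng decomposition, and then to deduce the non-stationary statement for $\bbP^n_{x_n}$ on $[\varepsilon,T]$ by combining the Markov property at time $\varepsilon$ with the uniform entropy bound furnished by Theorem~\ref{main2}.

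For the stationary step I would work with the affine function $\ell_h(x):=\langle x,h\rangle_H$. Log-concavity gives exponential tails along the direction $h$ (Borell), so $\ell_h\in L^2(\gamma_n)$; truncating $\ell_h$ and appealing to Lemma~\ref{lipb} plus closability of $\cE^n$ puts $\ell_h\in D(\cE^n)$ with Frechet derivative (in the $H_n$-metric) a \emph{constant} vector of $H_n$-norm at most $\kappa\|h\|_H$, by Assumption~\ref{h_n}. Since $\cE^n$ is a local symmetric Dirichlet form (cf.\ the proof of Theorem~\ref{main1}) and $\gamma_n$ is its reversible invariant measure, the Lyons--Zheng forward/backward decomposition applies under $\bbP^n_{\gamma_n}$, writing $\ell_h(X_t)-\ell_h(X_0)$ as a linear combination of pieces of continuous forward and time-reversed martingales each having the \emph{deterministic} bracket $t\mapsto 2t\|\nabla_{H_n}\ell_h\|^2_{H_n}\leq 2\kappa^2 t\|h\|^2_H$. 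The Burkholder--Davis--Gundy inequality at exponent $4$ then yields
\[
\E^n_{\gamma_n}\bigl[|\ell_h(X_t)-\ell_h(X_s)|^4\bigr]\leq C\,\kappa^4\|h\|^4_H\,|t-s|^2,\qquad 0\leq s,t\leq T,
\]
with $C$ universal. Combined with tightness of $\ell_h(X_0)\sim(\ell_h)_\#\gamma_n$ inherited from $\gamma_n\rightharpoonup\gamma$, the Kolmogorov--Chentsov criterion produces tightness in $C([0,T];\R)$.

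For the transfer to $\bbP^n_{x_n}$, I would first obtain a uniform-in-$n$ bound on $\mathcal{H}(\nu^{n,x_n}_\varepsilon\,|\,\gamma_n)$ via Theorem~\ref{main2}: by tightness of $(\gamma_n)$ in $H$ one may pick a compact $K_0\subset H$ with $\gamma_n(K_0)\geq 1/2$ for $n$ large, and the test measure $\sigma_n:=\gamma_n(K_0)^{-1}\mathbf{1}_{K_0}\gamma_n$ satisfies $\mathcal{H}(\sigma_n\,|\,\gamma_n)\leq\log 2$ and $\int\|y-x_n\|^2_{H_n}\,d\sigma_n$ bounded uniformly in $n$ (using~\eqref{bastakappa} and $x_n\to x$). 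The de la Vall\'ee--Poussin criterion applied to $s\mapsto s\log s$ then makes the densities $\rho^n:=d\nu^{n,x_n}_\varepsilon/d\gamma_n$ equi-integrable in $L^1(\gamma_n)$: for every $\delta>0$ there is $M$ with $\sup_n\int_{\{\rho^n>M\}}\rho^n\,d\gamma_n<\delta$. The Markov property at time $\varepsilon$ gives, for any Borel $\mathcal{K}\subset C([\varepsilon,T])$,
\[
\bbP^n_{x_n}\bigl((\langle X_t,h\rangle_H)_{t\in[\varepsilon,T]}\notin\mathcal{K}\bigr)=\int\bbP^n_y\bigl((\langle X_t,h\rangle_H)_{t\in[0,T-\varepsilon]}\notin\mathcal{K}\bigr)\rho^n(y)\,d\gamma_n(y),
\]
and splitting the integrand on $\{\rho^n\leq M\}\cup\{\rho^n>M\}$ bounds this by $M\,\bbP^n_{\gamma_n}((\langle X_t,h\rangle_H)\notin\mathcal{K})+\delta$. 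Choosing $\mathcal{K}$ compact with $\bbP^n_{\gamma_n}((\langle X_t,h\rangle_H)\notin\mathcal{K})<\delta/M$ for all $n$, via the stationary step, completes the argument.

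The main obstacle is the uniform-in-$n$ moment bound of order $|t-s|^{1+\beta}$ demanded by Kolmogorov--Chentsov: the $L^2$-estimate $\E[|\ell_h(X_t)-\ell_h(X_s)|^2]\leq 2|t-s|\cE^n(\ell_h,\ell_h)$ coming from the spectral theorem only gives the borderline exponent~$1$. The Lyons--Zheng decomposition is the standard reversible-Markov remedy, and the reason it gives a bound uniform in $n$ here is that $\ell_h$ is affine, forcing its carr\'e du champ to be the \emph{constant} $\|\nabla_{H_n}\ell_h\|^2_{H_n}$ (not merely an $L^\infty$ function), so the Lyons--Zheng brackets are deterministic and BDG applies directly at exponent $4$.
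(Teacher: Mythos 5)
Your proposal matches the paper's proof in both steps. For the stationary part the paper also uses the Lyons--Zheng forward/backward decomposition together with Burkholder--Davis--Gundy (at an arbitrary exponent $p>1$), controlling the deterministic brackets via the duality map $\Pi_n$ and the uniform norm bound~\eqref{bastakappa}, and for the transfer to $\bbP^n_{x_n}$ the paper establishes the uniform entropy bound $\sup_n\RelativeEntropy{\nu^{n,x_n}_\varepsilon}{\gamma_n}<\infty$ via Theorem~\ref{tgflow}(iii) with a conditioned-ball test measure, records $d\mathbf{P}_n/d\overline{\mathbf{P}}_n=\rho^n_\varepsilon(X_\varepsilon)$, and then transfers tightness using the entropy inequality~\eqref{entrine} applied to the laws of $\langle X_\cdot,h\rangle_H$ --- which is the same content as your equi-integrability splitting by level sets of $\rho^n$, merely phrased at the level of relative entropy of path measures instead of de la Vall\'ee--Poussin on the densities.
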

\begin{proof} Let $H_n=H^0(\gamma_n)$, $K_n=K(\gamma_n)$ and
$\bbP^n_{\gamma_n}=\int_{H_n}\bbP_x^n\,d\gamma_n(x)$.
For any $h\in H_n$ we have by the Lyons-Zheng
decomposition, see e.g. \cite[Th. 5.7.1]{fot} that, under $\bbP^n_{\gamma_n}$,
\[
\la h,X_t-X_0\ra_{H_n}
\, = \, \frac 12 \,
M_t \, - \, \frac 12 \, (N_T  - N_{T-t}),\qquad\forall t\in [0,T],
\]
where $M$, respectively $N$, is a $\bbP^n_{\gamma_n}$-martingale with respect to
the natural filtration of $(X_t,\ t\in [0,T])$, respectively of $(X_{T-t}, \ t\in[0,T])$.
Moreover, the quadratic variations $\langle M\rangle_t$, $\langle N\rangle_t$ are both
equal to $t \cdot \|h\|^2_{H_n}$. By the Burkholder-Davis-Gundy inequality we can
find, for all $p>1$, a constant $c_p\in (0,+\infty)$ such that
\begin{equation}\label{cla2}
{\mathbb E}^n_{\gamma_n}\left[\left|\la h,X_t-X_s\ra_{H_n}\right|^p\right]
\leq c_p \|h\|^p_{H_n} |t-s|^{p/2}, \qquad t,\,s\in[0,T].
\end{equation}
Let us denote by $\Pi_n:H\to H_n$ the duality map satisfying
$\langle\Pi_n(h),v\rangle_{H_n}=\langle h,v\rangle_H$ for all
$h\in H_n$. Then, choosing $v=\Pi_n(h)$, from \eqref{bastakappa}
we get $\Vert\Pi_n(h)\Vert_{H_n}\leq\kappa\Vert h\Vert_H$,
so that \eqref{cla2} gives
\[
{\mathbb E}^n_{\gamma_n}\left[\left|\la h,X_t-X_s\ra_{H}\right|^p \right]
\leq\kappa^p  c_p \|h\|_H^p |t-s|^{p/2}, \qquad t,\,s\in[0,T].
\]
Then tightness of the laws of $\langle X_t,h\rangle_H$ under
$\bbP^n_{\gamma_n}$ in $C([0,T])$ follows e.g. by
\cite[Exercise~2.4.2]{strook}.

\smallskip
Let $\varepsilon>0$ and let us prove that
$\sup_n\RelativeEntropy{\nu_\varepsilon^{n,x_n}}{\gamma_n}<+\infty$.
Since $x\in K$ there exist $R_\varepsilon>0$ such that
$\gamma(B_{R_\varepsilon}(x))>1/2$, so that there exists
$n_\varepsilon$ such that $\gamma_n(B_{R_\varepsilon}(x_n))\geq 1/2$
for all $n\geq n_\varepsilon$. Let $\nu_n:=\gamma_n(\cdot|
B_{R_\varepsilon}(x_n))$; then for $n\geq n_\varepsilon$ we have
$W^2_{2,H_n}(\delta_{x_n},\nu_n) \leq \kappa^2
W_2^2(\delta_{x_n},\nu_n)\leq 2\kappa^2R^2$, and since
$\RelativeEntropy{\nu_n}{\gamma_n}=-\ln\gamma_n(B_{R_\varepsilon}(x_n))$,
by (iii) of Theorem \ref{tgflow} we get
\[
\RelativeEntropy{\nu_\varepsilon^{x_n}}{\gamma_n}\leq
\frac{2\kappa^2R^2}\varepsilon  + \ln 2\qquad\forall n\geq
n_\varepsilon.
\]
Let ${\bf P}_n$ (resp. $\overline {\bf P}_n$) be the law of $(X_t,t\in[\varepsilon,T])$ under
$\bbP_{x_n}^n$ (resp. $\bbP^n_{\gamma_n}$).
Let us prove that ${\bf P}_n\ll\overline {\bf P}_n$ and
\begin{equation}\label{srotondo}
\frac{d{\bf P}_n}{d\overline {\bf P}_n}  = \rho^n_\varepsilon(X_\varepsilon)
\qquad\text{$\overline {\bf P}_n$-almost surely,}
\end{equation}
where $\rho^n_\varepsilon$ is the density of
$\nu_\varepsilon^{n,x_n}$ with respect to $\gamma_n$. For any
bounded and Borel functional $\Phi:C([0,T-\varepsilon];H)\mapsto\R$,
we have by the Markov property:
\[
\begin{split}
\E_{x_n}^n(\Phi(X_{\varepsilon+\cdot})) & = \E^n_{x_n}(
\E_{X_\varepsilon}^n(\Phi)) = \int d\nu_\varepsilon^{n,x_n}(y) \,
\E_{y}^n(\Phi) = \int d\gamma_n(y) \, \rho^n_\varepsilon(y) \,
\E_{y}^n(\Phi) \\ & = \E_{\gamma_n}^n(\rho^n_\varepsilon(X_0) \,
\Phi(X_\cdot)) = \E_{\gamma_n}^n(\rho^n_\varepsilon(X_\varepsilon)
\, \Phi(X_{\varepsilon+\,\cdot})),
\end{split}
\]
where in the last equality we use stationarity, and \eqref{srotondo}
is proven.

Let now $h\in H$ and let ${\bf P}_n^h$ (resp. $\overline {\bf
P}_n^h$) be the law of $(\langle
X_t,h\rangle_H,t\in[\varepsilon,T])$ under $\bbP_{x_n}^n$ (resp.
$\bbP^n_{\gamma_n}$); since the relative Entropy does not increase
under marginals \cite[9.4.5]{ags}, from \eqref{srotondo} we get
$$
\RelativeEntropy{{\bf P}_n^h}{\overline {\bf P}^h_n}\leq
\RelativeEntropy{{\bf P}_n}{\overline {\bf P}_n}=
\RelativeEntropy{\nu_\varepsilon^{x_n}}{\gamma_n}.
$$
It follows that $\sup_n\RelativeEntropy{{\bf P}_n^h}{\overline {\bf
P}^h_n}$ is finite. By applying the entropy inequality
\eqref{entrine}, tightness of $(\overline {\bf P}^h_n)$ implies
tightness of $({\bf P}^h_n)$.
\end{proof}

\smallskip
\noindent {\bf Proof of Theorem~\ref{main4}.} Let
$\mu\in\ProbabilitiesTwo{H}$ be an invariant measure for
$(P_t)_{t\geq 0}$. Then, by \eqref{rdi12}, $\Semi{\mu}{t}\equiv\mu$
is a constant gradient flow of $\RelativeEntropy{\cdot}{\gamma}$ and
therefore, by \eqref{EVI},
$\RelativeEntropy{\mu}{\gamma}\leq\RelativeEntropy{\nu}{\gamma}$ for
all $\nu\in\ProbabilitiesTwo{H}$. Since $t\mapsto t\ln t$ is
strictly convex, the unique minimizer of
$\RelativeEntropy{\cdot}{\gamma}$ in $\ProbabilitiesTwo{H}$ is
$\gamma$, and therefore $\mu=\gamma$.
{\penalty-20\null\hfill$\square$\par\medbreak}

\appendix
\section{Some properties of log-concave measures}
\label{properties}

In this appendix we state and prove some useful properties of log-concave
measures and of convex functions used throughout the paper.

First of all, for lower semicontinuous convex functions
$V:\R^k\to\R\cup\{+\infty\}$ (i.e. the typical densities of
log-concave measures),  we recall that the properties
$\int\exp(-V)\,dx<+\infty$, $V(x)\to +\infty$ as $\|x\|\to +\infty$
and $V(x)\to +\infty$ at least linearly as $\|x\|\to +\infty$ are
all equivalent: indeed, the equivalence between the second and the
third one simply follows by the monotonicity of difference quotients
along radial directions, and clearly a linear growth at infinity
implies finiteness of the integral. On the other hand, if the
integral is finite, a crude growth estimate on $V$ can be obtained
as follows: assuming with no loss of generality that $\{V<+\infty\}$
has nonempty interior, we can find a ball $B$ and $M<+\infty$ such
that $V\leq M$ on $B$; then, on the convex cone $C_x$ generated by
$x$ and $B$, we have the inequality $V\leq M+V^+(x)$. Changing signs
and taking exponentials we can integrate on $C_x$ to obtain
$$
e^{V^+(x)+M}\geq\left(\int_{\R^k}e^{-V(y)}\,dy\right)^{-1}\Leb{k}(C_x)\to +\infty
\quad\text{as $|x|\to +\infty$.}
$$

\begin{lemma}[Variational characterization of $\cE_\gamma(u,u)$]\label{slopeentro}
Let $\gamma\in\ProbabilitiesTwo{H}$ be a non-degenerate log-concave
measure, and let $u$ be a bounded $C^1$ cylindrical function, with
$\inf u>0$ and $\int u^2\,d\gamma=1$. Then $\sqrt{\cE_\gamma(u,u)}$
is the smallest constant $S$ satisfying
\begin{equation}\label{beppe1}
\RelativeEntropy{\mu}{\gamma}\geq\RelativeEntropy{u^2\gamma}{\gamma}-
2 \, S \, W_2(\mu,u^2\gamma),
\qquad\forall\mu\in\ProbabilitiesTwo{H}.
\end{equation}
\end{lemma}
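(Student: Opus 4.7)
The plan is to show the upper bound $S\leq\sqrt{\cE_\gamma(u,u)}$ via the Energy Inequality and Cauchy--Schwarz, and the lower bound $S\geq\sqrt{\cE_\gamma(u,u)}$ by constructing a family of perturbations $\mu_\epsilon$ of $u^2\gamma$ that asymptotically saturate \eqref{beppe1}. Throughout I exploit the cylindrical structure of $u$ to reduce the analysis to $\R^k$.

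\textbf{Part one: $S\leq\sqrt{\cE_\gamma(u,u)}$.} Write $u=\tilde u\circ\pi$ with $\pi:H\to H_k$ an orthogonal projection onto a finite-dimensional subspace and $\tilde u\in C^1_b(H_k)$, and set $\gamma_k:=\pi_\#\gamma$, which is log-concave by Proposition~\ref{charalog}. The data-processing inequality for the relative entropy gives $\mathcal{H}(\mu|\gamma)\geq\mathcal{H}(\pi_\#\mu|\gamma_k)$; pushing any optimal coupling forward through the $1$-Lipschitz map $\pi$ yields $W_2(\pi_\#\mu,\tilde u^2\gamma_k)\leq W_2(\mu,u^2\gamma)$; and cylindricality of $u$ gives the identifications $\mathcal{H}(\tilde u^2\gamma_k|\gamma_k)=\mathcal{H}(u^2\gamma|\gamma)$ and $\cE_{\gamma_k}(\tilde u,\tilde u)=\cE_\gamma(u,u)$. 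It therefore suffices to prove the inequality in $\R^k$. Apply Lemma~\ref{lenergyi} with $\rho=\tilde u^2$ (which is $C^1$, bounded, and bounded below by $(\inf u)^2>0$): letting $\tt$ be the optimal transport from $\tilde u^2\gamma_k$ to $\pi_\#\mu$,
\[
\mathcal{H}(\pi_\#\mu|\gamma_k)\geq\mathcal{H}(\tilde u^2\gamma_k|\gamma_k)+\int_{H_k}\langle 2\tilde u\nabla\tilde u,\tt-\ii\rangle\,d\gamma_k.
\]
Cauchy--Schwarz in $L^2(\gamma_k;H_k)$, after rewriting the integrand as $\langle 2\nabla\tilde u,\tilde u(\tt-\ii)\rangle$ and using $\int\tilde u^2\|\tt-\ii\|^2\,d\gamma_k=W_2^2(\pi_\#\mu,\tilde u^2\gamma_k)$ by optimality of $\tt$, bounds this integral from below by $-2\sqrt{\cE_\gamma(u,u)}\,W_2(\pi_\#\mu,\tilde u^2\gamma_k)$; reassembling the chain of inequalities gives \eqref{beppe1}.

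\textbf{Part two: $S\geq\sqrt{\cE_\gamma(u,u)}$.} Introduce the bounded cylindrical vector field $\xi:=-\nabla u/u$, well-defined since $\inf u>0$ and $\nabla u$ is bounded, and set
\[
\mu_\epsilon:=(\ii+\epsilon\xi)_\#(u^2\gamma),\qquad\epsilon>0.
\]
The coupling $(\ii,\ii+\epsilon\xi)_\#(u^2\gamma)$ yields $W_2^2(\mu_\epsilon,u^2\gamma)\leq\epsilon^2\int u^2\|\xi\|^2\,d\gamma=\epsilon^2\cE_\gamma(u,u)$. The key estimate is the entropy expansion
\[
\mathcal{H}(u^2\gamma|\gamma)-\mathcal{H}(\mu_\epsilon|\gamma)\geq 2\epsilon\,\cE_\gamma(u,u)+o(\epsilon)\qquad\text{as }\epsilon\downarrow 0.
\]
Granted this, applying \eqref{beppe1} at $\mu=\mu_\epsilon$ and dividing by $2W_2(\mu_\epsilon,u^2\gamma)$ gives
\[
S\geq\frac{\mathcal{H}(u^2\gamma|\gamma)-\mathcal{H}(\mu_\epsilon|\gamma)}{2W_2(\mu_\epsilon,u^2\gamma)}\geq\frac{2\epsilon\,\cE_\gamma(u,u)+o(\epsilon)}{2\epsilon\sqrt{\cE_\gamma(u,u)}}\ \longrightarrow\ \sqrt{\cE_\gamma(u,u)}\quad\text{as }\epsilon\downarrow 0.
\]

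\textbf{Main obstacle.} The hard step is the entropy expansion above. My approach is to apply Lemma~\ref{lenergyi} in the reverse direction, starting from $\rho=\rho_\epsilon$, the $\gamma$-density of $\mu_\epsilon$, and transporting to $u^2\gamma$: letting $\tt^\epsilon$ be the optimal transport from $\mu_\epsilon$ to $u^2\gamma$,
\[
\mathcal{H}(u^2\gamma|\gamma)-\mathcal{H}(\mu_\epsilon|\gamma)\geq\int\langle\nabla\rho_\epsilon,\tt^\epsilon-\ii\rangle\,d\gamma.
\]
For small $\epsilon$ and $u$ sufficiently smooth, $\ii+\epsilon\xi$ is the gradient of the convex function $x\mapsto\|x\|^2/2-\epsilon\log u(x)$, hence by Brenier's theorem it is the unique optimal transport from $u^2\gamma$ to $\mu_\epsilon$, and $\tt^\epsilon=(\ii+\epsilon\xi)^{-1}=\ii-\epsilon\xi+O(\epsilon^2)$; the change-of-variable formula simultaneously gives $\rho_\epsilon\to u^2$ in $C^1$, so $\nabla\rho_\epsilon\to 2u\nabla u$, and the integral expands to $-\epsilon\int\langle 2u\nabla u,\xi\rangle\,d\gamma+O(\epsilon^2)=2\epsilon\,\cE_\gamma(u,u)+O(\epsilon^2)$. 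The regularity issue --- Lemma~\ref{lenergyi} requires $\rho_\epsilon\in C^1$ with positive lower bound, forcing $u\in C^2$ --- is resolved by mollification on $H_k$: produce $\tilde u_n\in C^\infty(H_k)$ with $\tilde u_n\to\tilde u$ uniformly in $C^1_b$ and $\inf\tilde u_n\geq(\inf u)/2$, run the sharpness argument for $u_n=\tilde u_n\circ\pi$, and take a diagonal limit in $(n,\epsilon)$ using $\mathcal{H}(u_n^2\gamma|\gamma)\to\mathcal{H}(u^2\gamma|\gamma)$, $\cE_\gamma(u_n,u_n)\to\cE_\gamma(u,u)$ (both consequences of the uniform bounds and dominated convergence), together with the continuity of $W_2(\cdot,u^2\gamma)$, to recover the optimality bound for $u$.
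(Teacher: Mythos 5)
Your Part one matches the paper's proof: the same reduction to $\R^k$ via data processing, the same use of Lemma~\ref{lenergyi} with $\rho=u^2$ and Cauchy--Schwarz. One thing you gloss over is that Lemma~\ref{lenergyi}, as stated, lives in the Section~3 setting of a smooth potential with bounded derivatives; the paper extends the energy inequality to a general log-concave $\gamma_k=\pi_\#\gamma$ by the same monotone approximation used in the proof of Theorem~\ref{nonsmooth}. You should record this step rather than invoke the lemma for $\gamma_k$ directly.

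Part two has a genuine gap. You take $\xi=-\nabla u/u$ and set $\mu_\epsilon=(\ii+\epsilon\xi)_\#(u^2\gamma)$, but $\xi$ is a bounded cylindrical field that in general is \emph{not} supported in a compact subset of the interior of $\{V<+\infty\}$ (for a non-degenerate log-concave $\gamma$ the support can be a proper convex subset of $\R^k$, e.g.\ the normalized uniform measure on a ball). The map $\ii+\epsilon\xi$ then pushes some of the mass of $u^2\gamma$ outside $\overline{\{V<+\infty\}}$, so $\RelativeEntropy{\mu_\epsilon}{\gamma}=+\infty$ and the expansion you want, $\RelativeEntropy{u^2\gamma}{\gamma}-\RelativeEntropy{\mu_\epsilon}{\gamma}\geq 2\epsilon\,\cE_\gamma(u,u)+o(\epsilon)$, is vacuous. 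Your mollification of $u$ does not cure this, because the obstruction comes from the geometry of $\supp\gamma$, not from the regularity of $u$. The paper sidesteps this by taking an \emph{arbitrary} $\ss\in C^\infty_c$ supported in the interior of $\{V<+\infty\}$, computing the first-order entropy expansion for $\tt_\epsilon=\ii+\epsilon\ss$ directly via the area formula (no Brenier optimality is needed, only the fact that $\tt_\epsilon$ is a diffeomorphism leaving $\{V<+\infty\}$ invariant for small $\epsilon$), and deducing the linear inequality $\int u\langle\nabla u,\ss\rangle\,d\gamma\geq -S\left(\int\|\ss\|^2 u^2\,d\gamma\right)^{1/2}$ for all such $\ss$; since $u^2\gamma$ is concentrated in the interior of $\{V<+\infty\}$, one then approximates $-\nabla\ln u$ by such $\ss$ in $L^2(u^2\gamma;\R^k)$ to optimize. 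This keeps the perturbation inside the support and also avoids the $C^2$ regularity you need for the Brenier/convexity argument. Your reverse application of Lemma~\ref{lenergyi}, with the claim that $\ii+\epsilon\xi$ is the optimal map and that $\tt^\epsilon=(\ii+\epsilon\xi)^{-1}=\ii-\epsilon\xi+O(\epsilon^2)$, is correct on its own terms in the case $\{V<+\infty\}=\R^k$, but it is a considerably heavier route than the direct area-formula computation, which requires only $u\in C^1$.
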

\begin{proof} First, we realize that this is essentially a finite-dimensional statement.
Indeed, if $\pi:H\to L$ is a finite-dimensional orthogonal projection such that
$u=u\circ\pi$, a simple application of Jensen's inequality gives \cite[Lemma~9.4.5]{ags}
$\RelativeEntropy{\mu}{\gamma}\geq\RelativeEntropy{\pi_\#\mu}{\pi_\#\gamma}$.
Since $W_2(\pi_\#\mu,u^2\pi_\#\gamma)=W_2(\pi_\#\mu,\pi_\#(u^2\gamma))\leq
W_2(\mu,u^2\gamma)$ and $\cE_\gamma(u,u)=\cE_{\pi_\#\gamma}(u,u)$, we need
only to check the analog of \eqref{beppe1} with
$\gamma$ replaced by $\pi_\#\gamma$ and $H$ replaced by $L$. So, from now
on we shall assume that $H=\R^k$ for some integer $k$.

In Lemma~\ref{lenergyi} we proved that
$$
\RelativeEntropy{\mu}{\gamma}\geq\RelativeEntropy{u^2\gamma}{\gamma}-
2\sqrt{\cE_\gamma(u,u)}\, W_2(\mu,u^2\gamma),
\qquad\forall\mu\in\ProbabilitiesTwo{\R^k}
$$
when $\gamma=\exp(-V)\Leb{k}$ with $V$ smooth, convex, and $\nabla
V$ and all its derivatives are bounded (it suffices to use the
Schwartz inequality to estimate from below the scalar product in
\eqref{energy}). By monotone approximation (see Step~2 in the proof
of Theorem~\ref{nonsmooth}) the same inequality holds for all
log-concave $\gamma$ in $\R^k$.

It remains to show that $\sqrt{\cE_\gamma(u,u)}$ is the smallest constant with this
property. In order to prove this fact, we fix $\ss\in C^\infty_c(\R^k;\R^k)$ with support
contained in the interior of $\{V<+\infty\}$, and consider the maps
$\tt_\varepsilon:=\ii+\varepsilon\ss$ and the measures
$\mu_\varepsilon=(\tt_\varepsilon)_\#(u^2\gamma)$, so that
$W_2^2(\mu_\varepsilon,u^2\gamma)\leq\varepsilon^2\int \|\ss\|^2 u^2\,d\gamma$.
On the other hand, the area formula gives that the density of $\mu_\varepsilon$ with
respect to $\Leb{k}$ is given by $f_\varepsilon$, where
$$
f_\varepsilon:=\frac{u^2\exp(-V)}{|{\rm \det}\nabla\tt_\varepsilon|}\circ \tt_\varepsilon^{-1}
$$
(notice that for $\varepsilon$ small enough $\tt_\eps$ is a diffeomorphism which leaves
$\{V<+\infty\}$ invariant). Since
\begin{eqnarray*}
\RelativeEntropy{\mu_\varepsilon}{\gamma}&=&
\int_{\R^k} f_\varepsilon\ln f_\varepsilon\,dx+\int_{\R^k} f_\varepsilon V\,dx=
\int_{\R^k} \ln (f_\varepsilon\circ\tt_\varepsilon) u^2\,d\gamma
+\int_{\R^k} (V\circ\tt_\varepsilon) u^2\,d\gamma\\&=&
\RelativeEntropy{u^2\gamma}{\gamma}-\varepsilon\int_{\R^k}(\nabla\cdot\ss)u^2\exp(-V)-
\langle\nabla V,\ss\rangle u^2\exp(-V)\,dx+o(\varepsilon)\\
&=&
\RelativeEntropy{u^2\gamma}{\gamma}+2\varepsilon\int_{\R^k} u
\langle\nabla u,\ss\rangle\,d\gamma+o(\varepsilon)
\end{eqnarray*}
from \eqref{beppe1} we get
$$
\int_{\R^k}u\langle\nabla u,\ss\rangle\,d\gamma\geq - S\sqrt{\int_{\R^k} \|\ss\|^2 u^2\,d\gamma}.
$$
Since $u^2\gamma$ is concentrated in the interior of $\{V<+\infty\}$, we can approximate
in $L^2(u^2\gamma;\R^k)$ the function $-\nabla\ln u$ with $\ss$ it follows that
$S\geq\sqrt{\cE_\gamma(u,u)}$.
\end{proof}

In the next two propositions, borrowed essentially from \cite{za}, we show that,
for convex functions $U:\R^k\to\R$, the growth at infinity of $\nabla U$ is always balanced
by the factor $e^{-U}$; this leads to uniform bounds and tightness estimates
for the measures $|\nabla U|e^{-U}\Leb{k}$, under uniform lower bounds on $U$.

\begin{proposition}\label{pzamb1}
Let $U:\R^k\to\R\cup\{+\infty\}$ be convex and lower semicontinuous,
with $U(x)\to+\infty$ as $\|x\|\to +\infty$, $\{U<+\infty\}$ having
a nonempty interior, and set $\gamma=\exp(-U)\Leb{k}$. Then, for all
unit vectors $h\in\R^k$ there exists a unique finite signed measure
$\Sigma_h^U$ in $\R^k$ supported on $\overline{\{U<+\infty\}}$ such
that
\begin{equation}\label{defsigmav}
\int_{\R^k}\frac{\partial u}{\partial h}\,d\gamma=\int_{\R^k}u\,d\Sigma_h
\qquad\forall u\in C^1_b(\R^k).
\end{equation}
Moreover, we have
$|\Sigma_h^U|(\R^k)=2\int_{h^\perp}\exp(-\min\limits_{t\in\R}U(y+th))\,dy$.
\end{proposition}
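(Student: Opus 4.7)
The plan is to reduce the statement to a one-dimensional integration by parts, performed fibrewise along the direction $h$, and then to reassemble the resulting one-dimensional measures by Fubini. Concretely, orthogonally split $\R^k=h^\perp\oplus\R h$, and for each $y\in h^\perp$ introduce the convex, lower semicontinuous function $U_y(t):=U(y+th)$ on $\R$. Since $\|y+th\|\to+\infty$ as $|t|\to\infty$ and $U$ is coercive, $U_y(t)\to+\infty$ as $|t|\to\infty$, so $e^{-U_y(\cdot)}$ is a log-concave function on $\R$ with compactly supported ``mass'': it is unimodal, nonnegative, vanishing at infinity, with supremum $e^{-m_y}$ where $m_y:=\min_{t\in\R}U_y(t)\in(-\infty,+\infty]$ (with the usual convention $e^{-\infty}=0$).

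The key one-dimensional observation is then that, being unimodal with vanishing limits at $\pm\infty$, $e^{-U_y(\cdot)}$ is of bounded variation on $\R$ with total variation exactly $2e^{-m_y}$. Its distributional derivative with a minus sign defines a finite signed measure $\mu_y:=-D(e^{-U_y})$ on $\R$, supported in $\overline{\{U_y<+\infty\}}$, and one-dimensional integration by parts gives
\begin{equation*}
  \int_\R \phi'(t)\,e^{-U_y(t)}\,dt=\int_\R \phi(t)\,d\mu_y(t)\qquad\forall\,\phi\in C^1_b(\R),
\end{equation*}
with $|\mu_y|(\R)=2e^{-m_y}$. Applied to $\phi(t):=u(y+th)$ for any $u\in C^1_b(\R^k)$, this handles the fibre integral in the direction $h$.

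To reassemble, the plan is to define $\Sigma_h^U$ as the ``product'' measure $\int_{h^\perp}\mu_y\,dy$, viewing each $\mu_y$ as a measure on the line $y+\R h$. For this to produce a finite signed measure, one needs $y\mapsto e^{-m_y}$ to be integrable on $h^\perp$. Here the crucial point, which I expect to be the main technical step, is to observe that $m_y$ is convex as a function of $y\in h^\perp$ (being the infimum over $t$ of the jointly convex function $(y,t)\mapsto U(y+th)$) and coercive: if $\|y_n\|\to\infty$ in $h^\perp$ with $m_{y_n}\leq C$, picking $t_n$ with $U(y_n+t_nh)\leq C+1$ and invoking the coercivity of $U$ together with $y_n\perp h$, one forces $\|y_n\|\leq\|y_n+t_nh\|\leq R$, a contradiction. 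Hence $m_y$ is a convex, coercive function on $h^\perp$, so $\int_{h^\perp}e^{-m_y}\,dy<+\infty$, and the measure $\Sigma_h^U$ is well-defined and finite with $|\Sigma_h^U|(\R^k)\le 2\int_{h^\perp}e^{-m_y}\,dy$; equality holds because the $\mu_y$ live on disjoint parallel lines, so the Hahn decompositions of the $\mu_y$ patch together into a Hahn decomposition of $\Sigma_h^U$.

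Finally, the identity \eqref{defsigmav} follows directly by Fubini and the one-dimensional integration by parts above. The support condition is inherited from the one-dimensional supports, since each $\mu_y$ is supported in $\overline{\{U_y<+\infty\}}\times\{y\}\subset\overline{\{U<+\infty\}}$. Uniqueness of $\Sigma_h^U$ is immediate from \eqref{defsigmax}, since $C^1_b(\R^k)$, extended by any smooth cutoff/mollification argument, is dense in $C_0(\R^k)$ for uniform convergence on compact sets, and finite signed Borel measures on $\R^k$ are separated by such test functions via the Riesz representation theorem.
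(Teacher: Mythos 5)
Your approach is essentially the one taken in the paper: fibrewise one-dimensional integration by parts along the direction $h$, followed by a Fubini reassembly on $h^\perp$. The variations you make (establishing integrability of $y\mapsto e^{-m_y}$ via convexity and coercivity of the marginal $m_y$, rather than via the linear growth of $U$; identifying $|\Sigma^U_h|$ by patching the fibrewise Hahn decompositions, rather than citing a commutation theorem for total variation and integral) are both correct and, if anything, a bit more self-contained than the paper's references.

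There is, however, one genuine gap. You claim that for every $y\in h^\perp$ the unimodal function $e^{-U_y}$ is of bounded variation on $\R$ with distributional total variation exactly $2e^{-m_y}$. This fails when the line $y+\R h$ is \emph{tangent} to $\overline{\{U<+\infty\}}$, i.e.\ when it meets the closed convex domain but not its interior: in that case $\{U_y<+\infty\}$ is a single point (or an interval on the boundary where $U_y\equiv+\infty$ may still hold a.e.), $e^{-U_y}=0$ $\Leb{1}$-a.e., hence $\mu_y:=-D(e^{-U_y})=0$, while $2e^{-m_y}>0$. So the pointwise variation (which is what your unimodality argument computes) and the distributional total variation disagree on these fibres, and the identity $|\mu_y|(\R)=2e^{-m_y}$ is literally false there. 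The paper isolates exactly this issue: writing $A$ for the projection on $h^\perp$ of $\Interior\{U<+\infty\}$ and $C$ for the projection of $\{U<+\infty\}$, it observes that the exceptional set is $C\setminus A$ and shows it is $\Leb{k-1}$-negligible (in fact $A=\Interior C$, so $C\setminus A\subset\partial C$, the boundary of a convex set). Only then does the formula $|\Sigma^U_h|(\R^k)=2\int_{h^\perp}e^{-m_y}\,dy$ follow from your Hahn-decomposition argument. Your identity \eqref{defsigmav} survives without this remark (both sides of the fibre identity vanish on the bad fibres), but the total-variation formula needs the null-set argument, which you should add.
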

\begin{proof}
Assume first $k=1$; the function $t\mapsto\exp(-U(t))$ is infinitesimal at
infinity, non-decreasing on a half-line $(-\infty,t_0)$ and non-increasing on $(t_0,+\infty)$,
where $t_0$ is any point in the interior of $\{U<+\infty\}$
where $U$ attains its minimum value.
Then $\exp(-U)$ has bounded variation on $\R$ and the total
variation of its distributional derivative $\frac{d}{dt}\exp(-U)$
is representable by:
\[
\left|\frac{d}{dt} \, e^{-U}\right| =
1_{(t<t_0)} \frac{d}{dt} e^{-U} - 1_{(t>t_0)} \frac{d}{dt} e^{-U}.
\]
It follows that $|\frac{d}{dt}\exp(-U)|(\R)=2\exp(-U(t_0))=2\exp(-\min\limits_{\R}U)$;
by definition of distributional derivative, $\Sigma^U=-\frac{d}{dt}\exp(-U)$ fulfils
\eqref{defsigmav} when the function $u$ is compactly supported, and a simple density
argument gives the general case.

In the case $k>1$ we denote $U_y(t):=U(y+th)$; since $U$ has at least linear
growth at infinity, it is easy to check that $\exp(-\min\limits_\R U_y)$ is
integrable on $h^\perp$. Now, notice that
Fubini's theorem implies the existence of
$\Sigma^U_h$ and its coincidence with the measure
$\int_{h^\perp}\Sigma^{U_y}\,dy$, i.e.
$$
\int_{\R^k}\frac{\partial u}{\partial h}e^{-U}\,dy \, dt=
\int_{h^\perp}\left(\int_{\R}\frac{d}{dt}u(y+th)e^{-U_y(t)}\,dt\right)\,dy,
\qquad u\in C^1_b(\R^k).
$$
On the other hand, if we denote by $A$ the projection on $h^\perp$ of the interior of
the convex set $\{U<+\infty\}$, and by $C$ the projection of $\{U<+\infty\}$, we have that
$\{U_y<+\infty\}$ has nonempty interior for all $y\in A$, while $U_y$ is identically equal
to $+\infty$ for all $y\in h^\perp\setminus C$; points $y$ in $C\setminus A$ correspond
to projections of boundary points of $\{U<+\infty\}$ where $h$ is tangential to the boundary,
and the co-area formula gives that this set of points is $\Leb{k-1}$-negligible in $h^\perp$.
As a consequence, $|\Sigma^{U_y}|(\R)=2\exp(-\min U_y)$ for $\Leb{k-1}$-a.e. $y\in h^\perp$.
A general result \cite[Corollary~2.29]{afp}
allows to commute total variation and integral, so that
$$
|\int_{h^\perp}\Sigma^{U_y}\,dy|(\R^k)=
\int_{h^\perp}|\Sigma^{U_y}|(\R)\,dy=
2\int_{h^\perp}\exp(-\min\limits_{t\in\R}U(y+th))\,dy.
$$
\end{proof}

\begin{proposition}[Continuity and tightness]\label{pzamb2}
Let $V_n:\R^k\to\R\cup\{+\infty\}$ be convex and lower
semicontinuous function, with $V_n\uparrow V$ and
$\int\exp(-V_1)<+\infty$. Then for all unit vectors $h\in\R^k$ there
exist compact sets $J_m\subset\R^k$ such that:
\begin{equation}\label{sig2}
|\Sigma^{V_n}_h|(\R^k\setminus J_m)\leq\frac 1 m\qquad \forall \ n,\,m\geq 1.
\end{equation}
Furthermore, $\Sigma_h^{V_n}\to\Sigma^V_h$ in the duality with $C_b(\R^k)$.
\end{proposition}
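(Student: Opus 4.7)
The plan is to deduce both assertions from the slicing representation $\Sigma_h^{V_n} = \int_{h^\perp} \Sigma^{V_n^y}\,dy$ established in the proof of Proposition~\ref{pzamb1} (writing $V_n^y(t):=V_n(y+th)$), combined with the pointwise monotonicity $V_1\le V_n\le V$. This monotonicity provides $n$-uniform envelopes from above and below, and is what makes both the tightness estimate and the identification of the limit go through.

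For the tightness bound \eqref{sig2}, I would take $J_R:=\{\|x\|\le R\}$ and estimate $|\Sigma_h^{V_n}|(\R^k\setminus J_R)$ by splitting the $y$-integration along $h^\perp$. Since $\|y+th\|^2=\|y\|^2+t^2$ for $y\perp h$, the outer region decomposes into $\|y\|>R/2$ (which contributes the full mass of the corresponding 1D slice) and $\|y\|\le R/2$ (where the line $y+\R h$ meets $\R^k\setminus J_R$ inside $|t|>T_R:=R\sqrt 3/2$). On the first piece I would use the total mass formula from Proposition~\ref{pzamb1}, namely $|\Sigma^{V_n^y}|(\R)=2e^{-\min V_n^y}\le 2e^{-\min V_1^y}$, together with $\int_{h^\perp}e^{-\min V_1^y}\,dy=\tfrac12|\Sigma_h^{V_1}|(\R^k)<+\infty$, so that this piece vanishes as $R\to\infty$. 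On the second piece I would exploit the unimodality of $e^{-V_n^y}$ coming from convexity of $V_n^y$: its total variation on a half-line $(T,\infty)$ is at most $2\sup_{t>T}e^{-V_n^y(t)}\le 2\sup_{t>T}e^{-V_1^y(t)}$, and analogously on $(-\infty,-T)$. Using that $\int e^{-V_1}<+\infty$ forces at-least-linear growth $V_1(x)\ge A+B\|x\|$ at infinity (as observed at the start of the Appendix) together with $\|y+th\|\ge |t|$, this $\sup$ is dominated by $e^{-A-BT_R}$, uniformly in $y$ and $n$, so the second piece also vanishes as $R\to\infty$.

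For the convergence statement, tightness combined with the uniform total-variation bound $|\Sigma_h^{V_n}|(\R^k)\le|\Sigma_h^{V_1}|(\R^k)$ allows me, via Prokhorov's theorem applied to the positive and negative parts separately, to extract from any subsequence a further subsequence converging in duality with $C_b(\R^k)$ to some finite signed measure $\mu$. To identify $\mu$ I would insert $u\in C_c^1(\R^k)$ into the defining identity \eqref{defsigmav}, i.e.\ $\int u\,d\Sigma_h^{V_n}=\int(\partial u/\partial h)\,e^{-V_n}\,dx$, and pass to the limit on the right-hand side by dominated convergence with envelope $\|\nabla u\|_\infty e^{-V_1}$ and pointwise monotone limit $e^{-V_n}\downarrow e^{-V}$. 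This yields $\int u\,d\mu=\int u\,d\Sigma_h^V$ for every $u\in C_c^1(\R^k)$, pinning down $\mu$ uniquely among finite signed Borel measures on $\R^k$, hence $\mu=\Sigma_h^V$ and the whole sequence converges.

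The main technical obstacle I expect is the tightness step, where the one-dimensional minimizers $t_0^{n,y}$ of the slices $V_n^y$ can a priori drift with $n$: a naive tail bound for $|\Sigma^{V_n^y}|(\{|t|>T\})$ written in terms of $e^{-V_n^y(\pm T)}$ fails when $T$ lies between the $n$-dependent minimizers. The resolution is to replace every $n$-dependent quantity by its $V_1$-envelope at the outset, giving up a multiplicative constant in exchange for a bound that is simultaneously $n$-free and exponentially small at infinity.
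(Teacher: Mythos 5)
Your proof is correct, and it takes a genuinely different route from the paper's. The paper first establishes convergence $\Sigma_h^{V_n}\to\Sigma_h^V$ in duality with $C_c(\R^k)$ by dominated convergence, then proves $|\Sigma_h^{V_n}|(\R^k)\to|\Sigma_h^V|(\R^k)$ by showing that the sliced minimizers $\min_t V_n(y+th)$ converge monotonically to $\min_t V(y+th)$ for each $y\in h^\perp$, and finally \emph{deduces} the tightness estimate \eqref{sig2} from these two facts via a contradiction argument using lower semicontinuity of the total variation on open sets. You instead prove tightness \emph{directly}: using the slicing $\Sigma_h^{V_n}=\int_{h^\perp}\Sigma^{V_n^y}\,dy$, the monotone envelope $V_n\ge V_1\ge A+B\|x\|$, the total-mass formula from Proposition~\ref{pzamb1}, and the unimodality of each slice $e^{-V_n^y}$, you get an $n$-uniform, quantitatively decaying bound on $|\Sigma_h^{V_n}|(\{\|x\|>R\})$. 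You then obtain $C_b$-duality convergence via Prokhorov on the Hahn parts plus identification through the defining identity \eqref{defsigmav} and dominated convergence on $C_c^1$ test functions, rather than via the paper's $C_c$-to-$C_b$ upgrade. The paper's route buys the auxiliary convergence $|\Sigma_h^{V_n}|(\R^k)\to|\Sigma_h^V|(\R^k)$ as a by-product, whereas yours is more elementary, avoids the minimizer-convergence lemma and the contradiction step, and gives explicit uniform tail decay. Both are valid; I would only flag two small points that deserve a sentence in a polished write-up: (i) the unimodal total-variation bound $|\Sigma^{V_n^y}|((T,\infty))\le 2\sup_{t>T}e^{-V_n^y(t)}$ should be stated to also cover the case where the slice $V_n^y$ jumps to $+\infty$ (it does, since a jump of $e^{-V_n^y}$ to zero contributes at most its height to the variation); (ii) after Prokhorov you should note explicitly that a finite signed Borel measure on $\R^k$ is determined by its action on $C_c^1(\R^k)$, so that identification on this smaller class indeed pins down the weak-$*$ limit and forces the whole sequence to converge.
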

\begin{proof} Let $A\in\R$, $B>0$ be such that $V_1(x)\geq A+B\|x\|$ for
all $x\in\R^k$. We set $\Sigma^n:=\Sigma^{V_n}_h$, $\Sigma:=\Sigma^V_h$.  We
first notice that $V_n(y+th)\geq V_1(y+th)\geq A+B\|y\|$
for all $y\in h^\perp$. Therefore, taking into account the representation of $|\Sigma^U_h|(\R^k)$
given by the previous proposition, we obtain that $|\Sigma^n|(\R^k)$ is uniformly
bounded. On the other hand, since $\exp(-V_n)\Leb{k}$ weakly converge to $\exp(-V)\Leb{k}$
(by the dominated convergence theorem) from \eqref{defsigmav} we infer that
$\Sigma^n\to\Sigma$ weakly in the duality with $C_c^1(\R^k)$, and then in the
duality with $C_c(\R^k)$.

We will prove that
\begin{equation}\label{cl}
\lim_{n\to\infty} |\Sigma^n|(\R^k)  = |\Sigma|(\R^k).
\end{equation}
Before proving (\ref{cl}), we show that it implies (\ref{sig2}):
consider a dense sequence $(x_j)$ in $\R^k$ and set, for $p,\,l\geq
1$ integers, $A_l^p:=\cup_{j=1}^l B(x_j,1/p)$. It is enough to prove
that for all $p$ there exists $l=l(p)$ such
$|\Sigma^n|(\R^k\setminus A_l^p)\leq 2^{-p}/m$ for all $n$: indeed,
in this case $J_m:=\cap_p \overline{A}_{l(p)}^p$ is a compact set
such that $|\Sigma^n|(\R^k\setminus J_m)\leq 1/m$ for all $n\geq 1$.
If, for some $p$, we can not find such $l$, then for all $l$ there
exists $n(l)$ such that $|\Sigma^{n(l)}|(A_l^p)\leq
|\Sigma_h^{n(l)}|(\R^k)-2^{-p}/m$. Since $n(l)$ must tend to
$+\infty$ as $l\to\to\infty$, and any open ball $B_r(0)$ is
contained in $A_l^p$ for $l$ large enough, by the lower
semicontinuity of the total variation on open sets (see for instance
\cite[Proposition~1.62(b)]{afp}) we find:
$$
|\Sigma|(B_r(0))\leq\liminf_{l\to\infty}|\Sigma^{n(l)}|(B_r(0))\leq
\liminf_{l\to\infty} |\Sigma^{n(l)}|(A_l^p)\leq |\Sigma|(\R^k)-2^{-p}/m.
$$
Letting $r\uparrow\infty$ we obtain a contradiction.
Therefore (\ref{sig2}) is proven.

In order to prove \eqref{cl}, taking again into
account the representation of $|\Sigma^U_h|(\R^k)$ given by the previous
proposition and the dominated convergence theorem, it suffices to show
that, with $y\in h^\perp$ fixed, $\min\limits_{t\in\R} V_n(y+th)$ converges
as $n\to\infty$ to $\min\limits_{t\in\R} V(y+th)$. By monotonicity we need
only to show that
\begin{equation}\label{gamali}
\liminf_n\min\limits_{t\in\R}V_n(y+th)\geq\min\limits_{t\in\R} V(y+th).
\end{equation}
Let $n(k)$ be a subsequence along which the liminf is achieved, let $t_k$ be
minimizers of $t\mapsto V_{n(k)}(y+th)$, and assume
(possibly extracting one more subsequence) that $t_k\to t$. The lower semicontinuity
of $V_{n(p)}$ gives $\lim_k V_{n(k)}(y+t_kh)\geq\liminf_k V_{n(p)}(y+t_kh)\geq V_{n(p)}(y+th)$.
Letting $p\to \infty$ we obtain \eqref{gamali}.
Finally, the tightness estimate allows to pass from convergence of $\Sigma^n$ in the
duality with $C_c(\R^k)$ to the convergence in the duality with $C_b(\R^k)$.
\end{proof}


\begin{thebibliography}{99}

\bibitem{ak} {\sc S. Albeverio, S. Kusuoka (2002),} \emph{Maximality
of infinite-dimensional Dirichlet forms and H\o egh-Krohn's model of quantum fields}.
Ideas and methods in quantum and statistical physics (Oslo, 1988),
301--330, Cambridge Univ. Press, Cambridge.

\bibitem{afp} {\sc L. Ambrosio, N. Fusco, D. Pallara (2000),} \emph{Functions
of Bounded Variation and Free Discontinuity Problems}. Oxford University Press.

\bibitem{ags}{\sc L. Ambrosio, N. Gigli, G. Savar\'e (2005),}
\emph{Gradient flows in metric spaces and in the spaces of probability measures}.
Lectures in Mathematics ETH Z\"urich, Birkh\"auser Verlag, Basel.

\bibitem{as}{\sc L. Ambrosio, G. Savar\'e (2007),}
\emph{Gradient flows in spaces of probability measures.}
Handbook of Differential Equations. Evolutionary equations III,
North Holland 2007.

\bibitem{ben} {\sc P. B{\'e}nilan (1972),} \emph{Solutions int\'egrales d'\'equations
d'\'evolution dans un espace de {B}anach.} C.R.Acad.Sci. Paris S\'er. A-B, {\bf 274}.

\bibitem{bogachev} {\sc V.I. Bogachev (1998),} \emph{Gaussian measures.}
Mathematical Surveys and Monographs, {\bf 62}, AMS.

\bibitem{borell} {\sc C. Borell (1975),} \emph{Convex set functions in $d$-space.}
Period. Math. Hungar., {\bf 6} 111--136.

\bibitem{br} {\sc H. Br\'ezis (1973),} \emph{Op\'erateurs maximaux monotones.}
North-Holland, Amsterdam.

\bibitem{cmv} {\sc J.A. Carrillo, R. McCann, C. Villani (2006),} \emph{Contraction in the
2-Wasserstein length space and thermalization of granular media.}
Arch. Rational Mech. Anal., {\bf 179}, 217--263.

\bibitem{cepa} {\sc E. C\'epa (1998)}, {\em Probl\`eme de Skorohod
multivoque}, Annals of Probability, {\bf 26} no. 2, 500-532.

\bibitem{deza} {\sc A. Debussche, L. Zambotti (2007)}, {\it Conservative
Stochastic Cahn-Hilliard equation with reflection}, to appear in
Annals of Probability.

\bibitem{dpr} {\sc G. Da Prato, M. R\"ockner (2002)}, {\em Singular
dissipative stochastic equations in Hilbert
spaces}, Probab. Theory Relat. Fields, {\bf 124} no. 2, 261--303.

\bibitem{dpz2} {\sc G. Da Prato, J. Zabczyk (1996)}, {\it Ergodicity
for Infinite Dimensional Systems}, London Mathematical Society
Lecture Notes, n.229, Cambridge University Press.

\bibitem{do} {\sc J.M.N. Dohmann (2005)}, {\it Feller-type
properties and path regularities of Markov processes}, Forum Math.
{\bf 17}, 343-359.

\bibitem{FeyUst} {\sc D. Feyel, A.S. Ust\"unel (2004),} \emph{Monge-Kantorovitch measure
transportation and Monge-Amp\`ere equation on Wiener space}, Probab. Theory Relat.
Fields, {\bf 128}, 347--385.

\bibitem{fot} {\sc M. Fukushima, Y. Oshima, M. Takeda (1994),}
{\it Dirichlet Forms and Symmetric Markov Processes.} Walter
de Gruyter, Berlin-New York.

\bibitem{fu} {\sc T. Funaki (2005)}, {\it
Stochastic Interface Models}. In: Lectures on Probability Theory and
Statistics, Ecole d'Et\'e de Probabilit\'es de Saint-Flour XXXIII -
2003 (ed. J. Picard), 103--274, Lect. Notes Math., {\bf 1869},
Springer.


\bibitem{fuol} {\sc T. Funaki, S. Olla (2001),} {\it Fluctuations
for $\nabla\phi$ interface model on a wall}.
Stoch. Proc. and Appl, {\bf 94},  1--27.

\bibitem{fusp} {\sc T. Funaki, H. Spohn (1997),}
{\it Motion by mean curvature from the Ginzburg-Landau $\nabla\phi$ interface model}.
Comm. Math. Phys.  {\bf 185},  1--36.

\bibitem{gos} {\sc G. Giacomin, S. Olla, H. Spohn (2001)},
\emph{Equilibrium fluctuations for $\nabla\phi$ interface model},
Ann. Probab.  {\bf 29}, 1138--1172.

\bibitem{john} {\sc F. John (1970),} {\em Partial differential equations}.
Springer (4th. ed.).

\bibitem{jko} {\sc R. Jordan, D. Kinderlehrer, F. Otto (1998),}
\emph{The variational formulation of the Fokker-Planck equation}.
SIAM J. Math. Anal.  {\bf 29},  1--17.

\bibitem{maro} {\sc Z. M. Ma, M. R\"ockner  (1992),} {\em
Introduction to the Theory of (Non-Symmetric) Dirichlet Forms.}
Universitext, Springer-Verlag.

\bibitem{nupa} {\sc D. Nualart, E. Pardoux (1992)}, {\it
White noise driven quasilinear SPDEs with reflection}, Prob. Theory
and Rel. Fields, {\bf 93}, pp. 77-89.

\bibitem{McCann97}
{\sc R.~J. McCann (1997)}, {\em A convexity principle for interacting gases.}
Adv. Math., {\bf 128}, 153--179.


\bibitem{reyo} {\sc D. Revuz, and M. Yor (1991)},
{\it Continuous Martingales and Brownian Motion}, Springer Verlag.

\bibitem{sheffield} {\sc S. Sheffield (2005)}, {\em Random Surfaces}, Asterisque, No. 304.

\bibitem{skorohod} {\sc A.V. Skorohod (1961)}, {\em Stochastic
equations for diffusions in a bounded region}, Theory Probab. Appl.
{\bf 6}, 264-274.

\bibitem{spohn} {\sc H. Spohn (1993)}, {\em Interface motion in
models with stochastic dynamics}, J. Stat. Phys. {\bf 71},
1081-1132.

\bibitem{strook} {\sc D.W. Stroock, S.R.S. Varadhan (1997),} {\em Multidimensional diffusion
processes.} Springer Verlag, second~ed.

\bibitem{tanaka} {\sc H. Tanaka (1979)}, {\em Stochastic
differential equations with reflecting boundary condition in convex
regions}, Hiroshima Math. J. {\bf 9}, 163-177.

\bibitem{vil} {\sc C. Villani (2003),} \emph{Topics in optimal transportation.}
Graduate Studies in Mathematics, {\bf 58}, AMS.

\bibitem{za02} {\sc L. Zambotti (2002)}, {\it Integration by parts formulae on
convex sets of paths and applications to SPDEs with reflection},
Probab. Theory Related Fields, {\bf 123} no. 4, 579--600.

\bibitem{za03} {\sc L. Zambotti (2003)}, {\it Integration by parts on
$\delta$-Bessel Bridges, $\delta > 3$, and related SPDEs}, Annals of
Probability, {\bf 31} no. 1, 323-348.

\bibitem{za04} {\sc L. Zambotti (2004)}, {\it Fluctuations for a $\nabla\varphi$
interface model with repulsion from a wall}, Prob. Theory and Rel.
Fields, {\bf 129} no. 3, 315-339.

\bibitem{za} {\sc L. Zambotti, (2006)},
{\em Convergence of approximations of monotone gradient systems},
Journal of Evolution Equations,  {\bf 6} no. 4, 601-619.


\end{thebibliography}

\end{document}